\newtheorem{thm}{Theorem}[section]
\newtheorem{cor}[thm]{Corollary}
\newtheorem{lem}[thm]{Lemma}
\newtheorem{prop}[thm]{Proposition}
\theoremstyle{definition}
\newtheorem{defn}[thm]{Definition}
\newtheorem{rmk}[thm]{Remark}
\numberwithin{equation}{section}
\begin{document}

\title[Some endoscopic properties ...]{Some endoscopic properties of the essentially tame Jacquet-Langlands correspondence
}

\author{Kam Fai Tam}
\address{Dept. of Math. \& Stat.,
McMaster University,
1280 Main Street West,
Hamilton, Ontario,
Canada L8S 4K1}
\email{geotam@math.mcmaster.ca}

\begin{abstract}
Let $F$ be a a non-Archimedean local field of characteristic 0 and $G$ be an inner form of the general linear group $G^*=\mathrm{GL}_n$ over $F$. We show that the rectifying character appearing in the essentially tame Jacquet-Langlands correspondence of Bushnell and Henniart for $G$ and $G^*$ can be factorized into a product of some special characters, called zeta-data in this paper, in the theory of endoscopy of Langlands and Shelstad. As a consequence, the essentially tame local Langlands correspondence for $G$ can be described using admissible embeddings of L-tori.

\end{abstract}
\maketitle

\tableofcontents

\section{Introduction}

Let $G^*$ be a general linear group over a non-Archimedean local field of characteristic 0, and $G$ be an inner form of $G^*$. In this paper, we refine the results of the rectifying characters in the context of the essentially tame Jacquet-Langlands correspondence \cite{BH-JLC} by proving that each rectifying character admits a factorization into a product of characters called $\zeta$-data, defined similarly to $\chi$-data in \cite{LS}, which are significant in describing the essentially tame local Langlands correspondence for $G$.

 We know from \cite{BH-JLC} that the rectifying characters are quadratic characters that measure the difference between two correspondences for essentially tame supercuspidal representations of $G$ and $G^*$: the representation theoretic one by matching the maximal simple types of the two groups, and the functorial one from the Jacquet-Langlands correspondence. On the representation theoretic side, the maximal simple types of $G$ can be constructed using certain characters of its elliptic maximal tori, while on the functorial side, the Langlands parameters for $G$ can be functorially lifted from the parameters of the same collection of characters.

 Combining these results with our result on rectifying characters, we show that the essentially tame local Langlands correspondence for $G$ can be described completely by admissible embeddings, defined in \cite{LS}, of the L-groups of elliptic maximal tori into the L-group of $G$, generalizing an analogous description proved by the author \cite{thesis} in the split case (when $G=G^*$).

\subsection{Background}

Let $F$ be a non-Archimedean local field, $G^*$ be the group ${\mathrm{GL}}_n$ defined over $F$, and $G$ be an inner form of $G^*$ defined over $F$. The set of $F$-points $G(F)$ of $G$ is therefore isomorphic to ${\mathrm{GL}}_m(D)$ as a group, where $D$ is a central division algebra over $F$ of dimension $d^2$ and $m=n/d$.

Let $\mathcal{A}_m^2(D)$ (resp. $\mathcal{A}_n^2(F)$) be the discrete series of $G(F)$ (resp. $G^*(F)$), i.e., the set of equivalence classes of irreducible admissible representations that are essentially square integrable mod-center. The Jacquet-Langlands correspondence asserts a bijection
\begin{equation*}
  JL:\mathcal{A}^2_n(F)\rightarrow \mathcal{A}^2_m(D)
\end{equation*}
determined by a character relation (see (\ref{JL-character-relation})) between a representation in $\mathcal{A}^2_n(F)$ and its image in $\mathcal{A}^2_m(D)$. The existence of this bijection is known, starting from the case when the characteristic char$(F)$ of $F$ is 0 and $n=2$ \cite{Jac-Langlands-GL(2)}, in which case $G(F)$ is the multiplicative group of the quaternion algebra over $F$. For arbitrary $n$, when $G(F)$ is the multiplicative group of a division algebra, the existence is proved by \cite{rogawski}. The general situations are treated by \cite{DKV} in the characteristic zero case and by \cite{Badu}, \cite{BHL} in the positive characteristic case.

Bushnell and Henniart describe in \cite[(2.1)]{BH-JLC} the image of $JL$ when it is restricted to the subset of representations in $\mathcal{A}^2_m(D)$, each of whose \emph{parametric degree} is equal to $n$. We do not need the full definition of the parametric degree of a representation, so we only refer to \cite[Section 2.7]{BH-JLC} for details. We only need to know that
\begin{itemize}
  \item the parametric degree of a representation in $\mathcal{A}^2_m(D)$ is a positive integer divisor of $n$, and
  \item a representation in $\mathcal{A}^2_m(D)$ is supercuspidal if its parametric degree is $n$; the converse is true in the split case (when $G=G^*$) but not in general.
\end{itemize}

Furthermore, we can describe the image of $JL$ of the representations in the above subset which are \emph{essentially tame}, a notion we will explain in Section \ref{section The correspondences}. More precisely, if we let $ \mathcal{A}^{{\mathrm{et}}}_m(D)$ (resp. $\mathcal{A}_n^{{\mathrm{et}}}(F)$) be the set of essentially tame representations in $ \mathcal{A}_m^2(D)$ (resp. $\mathcal{A}_n^2(F)$) whose parametric degree is $n$, then we can describe completely the \emph{essentially tame Jacquet-Langlands correspondence}:
\begin{equation*}
  JL:\mathcal{A}^{{\mathrm{et}}}_n(F)\rightarrow \mathcal{A}^{{\mathrm{et}}}_m(D),
\end{equation*}
as in \cite[Theorem A]{BH-JLC}.

To explain the theorem and describe $JL$ completely, we require the notion of admissible characters from \cite{Howe}. In Section \ref{section Admissible characters}, we define the set $P_n(F)$ of (equivalence classes of) admissible pairs $(E/F,\xi)\in P_n(F)$, where $E/F$ is a tamely ramified extension of degree $n$ and $\xi$ is a character of $E^\times$ admissible over $F$. This set bijectively parametrizes both $\mathcal{A}^{\mathrm{et}}_n(F)$ and $\mathcal{A}^{\mathrm{et}}_m(D)$ explicitly \cite{BH-JLC}, using the theory of simple types of $G(F)$ developed in \cite{BF-non-ab-Gauss-sum}, \cite{Grab}, \cite{Secherre1}, \cite{Secherre2}, \cite{Secherre3}, \cite{Secherre4}, \cite{Secherre5} which generalizes the corresponding theory in the split case \cite{BK}, \cite{BH-LTL1} and the division algebra case \cite{zink-D}, \cite{Brou-div-alg}.

If we denote by
\begin{equation}\label{intro bijection of P and FPi}
  {}_F\Pi:P_n(F)\rightarrow\mathcal{A}^\mathrm{et}_n(F),\,(E/F,\xi)\mapsto {}_F\Pi_\xi
\end{equation}
 and
 \begin{equation}\label{intro bijection of P and DPi}
   {}_D\Pi:P_n(F)\rightarrow\mathcal{A}^\mathrm{et}_m(D),\,(E/F,\xi)\mapsto {}_D\Pi_\xi
 \end{equation}
 the above bijections, then Bushnell and Henniart proved in \cite{BH-JLC} that the composition
\begin{equation*}
\nu: P_n(F) \xrightarrow{{}_F\Pi} \mathcal{A}^\mathrm{et}_n(F)
\xrightarrow{JL} \mathcal{A}^\mathrm{et}_m(D)
\xrightarrow{{}_D\Pi^{-1}} P_n(F)
\end{equation*}
maps an admissible pair $(E/F,\xi)\in P_n(F)$ to another pair of the form $(E/F,\xi\cdot{}_D\nu_\xi)$, where ${}_D\nu_\xi$ is a tamely ramified character ${}_D\nu_\xi$ of $E^\times$ depending on $\xi$. We borrow the terminology from \cite{BH-ET3} and call the character ${}_D\nu_\xi$ the \emph{rectifier} of $\xi$ for the essentially tame Jacquet-Langlands correspondence.

\subsection{Main results}

The main result of this paper is to relate the rectifier ${}_D\nu_\xi$ with a special set of characters, called $\zeta$-data in this paper, introduced in the theory of endoscopy of Langlands and Shelstad \cite{LS}. The significance of $\zeta$-data will be explained in the next section, together with a brief summary of the previous
results of the author \cite{thesis}.

To describe the main result, we first assume that char$(F)=0$ (see Remark \ref{characteristic} about this assumption). Let $T$ be the torus such that $T(F)=E^\times$. We regard $T$ as a maximal torus embedded in $G$. In contrast to the split case, we have to carefully choose the embedding $T\rightarrow G$ relative to an hereditary $\mathfrak{o}_F$-order in $G(F)$ associated to $\xi$. This will be explained in Section \ref{subsection An embedding condition}. Given this embedding, let $\Phi=\Phi(G,T)$ be the root system. For each root $\lambda\in \Phi$, we denote by $E_\lambda$ the fixed field of the stabilizer of $\lambda$ in the absolute Galois group $\Gamma_F$ of $F$, so that $E_\lambda$ is a field extension of a $\Gamma_F$-conjugate of $E$. We recall from \cite[Corollary 2.5.B]{LS} that $\zeta$-data is a set of characters
$$\{\zeta_{\lambda}\}=\{\zeta_{\lambda}\}_{\lambda\in \Gamma_F\backslash \Phi},$$
$\text{where each }\zeta_\lambda\text{ is a character of }E_\lambda^\times$ satisfying the conditions in \emph{loc. cit.} (and will be recalled in Section \ref{section Admissible embeddings of tori}). Here $\lambda$ ranges over a suitable subset of roots in $\Phi$, denoted by $\Gamma_F\backslash \Phi$ for the moment, representing the $\Gamma_F$-orbits of $\Phi$ and such that $E_\lambda$ is a field extension of $E$ (but not just its conjugate).

The following theorem restates the main result, Theorem \ref{zeta-data factor of BH-rectifier}, in a simpler way.
\begin{thm}\label{intro-main-thm-1} Given a character $\xi$ of $E^\times$ admissible over $F$.
  \begin{enumerate}[(i)]
    \item There exists a set of $\zeta$-data $\{\zeta_{\lambda,\xi}\}_{\lambda\in \Gamma_F\backslash\Phi}$ such that
  \begin{equation*}
  {}_D\nu_\xi=\prod_{\lambda\in \Gamma_F\backslash \Phi}\zeta_{\lambda,\xi}|_{E^\times}.
\end{equation*}
\label{zeta-data factorize}
\item The values of each $\zeta_{\lambda,\xi}$ can be expressed in terms of certain invariants, called t-factors in this paper, of the corresponding component in the complete symmetric decomposition of the finite symplectic modules associated to $\xi$ (see the notations and definitions in Sections \ref{section Complete decomposition of finite symplectic modules} and \ref{section
Invariants of finite symplectic modules}). \label{main-theorem-module}
  \end{enumerate}
\qed\end{thm}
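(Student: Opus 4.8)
The plan is to prove part (i) by comparing, orbit by orbit, two product decompositions indexed by the symmetric $\Gamma_F$-orbits in $\Phi=\Phi(G,T)$: on one side the Bushnell--Henniart formula for the rectifier ${}_D\nu_\xi$, and on the other the product $\prod_\lambda \zeta_{\lambda,\xi}|_{E^\times}$ of a $\zeta$-datum we are going to build. First I would recall from \cite{BH-JLC}, via the explicit rectifier computations of \cite{BH-ET3}, that ${}_D\nu_\xi$ is a tamely ramified quadratic character of $E^\times$ already given as a product of local factors attached to the ``symmetric'' part of the data of $\xi$ relative to the hereditary order chosen in Section \ref{subsection An embedding condition}. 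The first real task is to check that this indexing set is canonically the set of symmetric orbits $\lambda\in\Gamma_F\backslash\Phi$ (those with $E_\lambda/E_{\pm\lambda}$ quadratic and $E_\lambda\supseteq E$), that the asymmetric orbits contribute trivially and may be assigned the trivial character, and that the symmetric orbits (ramified or not) are handled uniformly by the module construction below, the unramified ones producing at most the unramified quadratic twist.

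For each symmetric orbit $\lambda$ I would construct $\zeta_{\lambda,\xi}$ directly from the finite symplectic module $V_\xi$ attached to $\xi$. Decomposing $V_\xi$ into isotypic pieces under $T(F)=E^\times$ gives the complete symmetric decomposition of Section \ref{section Complete decomposition of finite symplectic modules}; the $\lambda$-component $V_{\xi,\lambda}$ is a symplectic module over the residue field of $E_{\pm\lambda}$ carrying the extra structure that lets one read off the t-factor $t_\lambda(\xi)$ of Section \ref{section Invariants of finite symplectic modules}. I would then define $\zeta_{\lambda,\xi}$ on topological generators of $E_\lambda^\times$: trivial on $1+\mathfrak{p}_{E_\lambda}$ by tameness, the quadratic character attached to the module structure of $V_{\xi,\lambda}$ on $\mathfrak{o}_{E_\lambda}^\times$, and the sign $t_\lambda(\xi)$ on a chosen uniformizer. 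Since $\mathbb{C}^\times$ is divisible, such a character exists once the prescribed values are consistent on the relevant intersections of subgroups of $E_\lambda^\times$; checking this consistency — in particular compatibility with the restriction to $E^\times\cap E_{\pm\lambda}^\times$, and that $\zeta_{\lambda,\xi}|_{E^\times}$ is the correct $\lambda$-factor of ${}_D\nu_\xi$ — is where the explicit Gauss-sum definition of $t_\lambda(\xi)$ is used.

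With the $\zeta_{\lambda,\xi}$ in hand I would verify the conditions of \cite[Corollary 2.5.B]{LS}: Galois equivariance $\zeta_{\sigma\lambda,\xi}=\zeta_{\lambda,\xi}\circ\sigma^{-1}$ and the relation $\zeta_{-\lambda,\xi}=\zeta_{\lambda,\xi}^{-1}$ follow from the $\Gamma_F$- and $\lambda\mapsto-\lambda$-symmetry of the decomposition of $V_\xi$, while the defining $\zeta$-datum condition $\zeta_{\lambda,\xi}|_{E_{\pm\lambda}^\times}=1$ is precisely what forces the choice ``module character on the units, t-factor on the uniformizer,'' and is verified by showing the normalized Gauss sum defining $t_\lambda(\xi)$ is insensitive to $E_{\pm\lambda}^\times$-twists. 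Assembling the orbitwise identities gives ${}_D\nu_\xi=\prod_{\lambda\in\Gamma_F\backslash\Phi}\zeta_{\lambda,\xi}|_{E^\times}$, which is part (i); part (ii) is then immediate, since $\zeta_{\lambda,\xi}$ was defined in terms of the t-factor and module character of $V_{\xi,\lambda}$.

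The main obstacle is the dictionary in the first two steps. Bushnell--Henniart's rectifier is packaged in the language of simple types for $\mathrm{GL}_m(D)$ — the hereditary $\mathfrak{o}_F$-order, its interior/exterior structure, and the jumps of $\xi$ — whereas the $\zeta$-data live on the root fields $E_\lambda$. Matching them requires showing that the embedding $T\to G$ of Section \ref{subsection An embedding condition}, which unlike in the split case must be chosen relative to the order attached to $\xi$, aligns the root-space filtration of $\mathrm{Lie}(G)$ with the filtration underlying the Heisenberg module $V_\xi$, so that symmetric ramified orbits correspond exactly to the components of $V_\xi$ carrying a nontrivial t-factor, with matching ramification and residue degrees. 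Getting these numerical invariants to line up in the division-algebra case, while tracking the extra unramified twist coming from the difference between the inner form $G$ and $G^*$, is the delicate heart of the argument and the reason the theorem is more than a formal consequence of \cite{BH-JLC} and \cite{LS}.
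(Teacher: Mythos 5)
Your overall strategy --- decompose the finite symplectic modules along $\Gamma_F\backslash\Phi$, define each $\zeta_{\lambda,\xi}$ through t-factors, check the Langlands--Shelstad conditions, and multiply --- is indeed the paper's strategy, but two of your structural claims break the argument. First, the asymmetric orbits do \emph{not} contribute trivially. The First Comparison Theorem gives ${}_D\nu_\xi|_{{\boldsymbol{\mu}}_E}=t^1_{\boldsymbol{\mu}}({}_A\mathfrak{V}_{\xi})\,t^1_{\boldsymbol{\mu}}({}_M\mathfrak{V}_{\xi})$, and the $t^1$-factor of a hyperbolic (asymmetric) component is a signature character which is nontrivial in general; moreover, for a non-split inner form the components ${}_A\mathfrak{V}_{\xi,[g]}$ and ${}_M\mathfrak{V}_{\xi,[g]}$ genuinely differ at asymmetric $[g]$ (this happens exactly when a jump $a_k$ is odd while $e(\mathfrak{A}_{k+1}/\mathfrak{o}_E)$ is even, cf.\ Proposition \ref{describe complete decomp}), so the two signature characters do not cancel. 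Accordingly the paper assigns to asymmetric $g$ the generally nontrivial character $\zeta_{g,\xi}|_{{\boldsymbol{\mu}}_{E_g}}=t^1_{{\boldsymbol{\mu}}_{g}}({}_A\boldsymbol{\mathfrak{V}}_{\xi,[g]})\,t^1_{{\boldsymbol{\mu}}_{g}}({}_M\boldsymbol{\mathfrak{V}}_{\xi,[g]})$, with freedom only in the value at $\varpi_E$; setting these data trivial makes the product miss part of ${}_D\nu_\xi|_{{\boldsymbol{\mu}}_E}$. This is tied to the second problem: you build $\zeta_{\lambda,\xi}$ from a single module ``$V_\xi$,'' but the rectifier measures the difference between $G$ and $G^*$, so every datum must be formed from the \emph{product} of the t-factors of the two modules ${}_A\mathfrak{V}_{\xi}$ and ${}_M\mathfrak{V}_{\xi}$: where the components agree the factor is trivial (a square of a sign), and the whole content of the theorem sits where they differ, which is what Proposition \ref{sym mod} pins down (symmetric ramified parts always agree; symmetric unramified parts agree unless $f$ is even and $m$ is odd, in which case they are complementary inside $\mathfrak{U}_{{\mathrm{sym-unram}}}$).

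The other missing piece is the sign at the uniformizer, which you flag as ``the delicate heart'' but do not resolve. The Second Comparison Theorem carries the global sign $(-1)^{n-m+f_\varpi-m_\varpi}$; after splitting off the ramified part it becomes $1$ when $m$ is even and $(-1)^{e+f_\varpi+1}$ when $m$ is odd (formula (\ref{sim-alg-sign-sym-unram})), and it must be distributed as signs $\epsilon_g$ over the symmetric unramified cosets and $[\sigma^{e/2}]$ compatibly with the $\zeta$-datum requirement $\zeta_{g,\xi}|_{E_{\pm g}^\times}\equiv 1$. That compatibility is not automatic from ``quadratic character on units, t-factor on a uniformizer'': the paper checks it using the norm-group decomposition $N_{E_g/E_{\pm g}}(E_g^\times)={\boldsymbol{\mu}}_{E_{\pm g}}\times\langle z_e^i z_{\phi^{f/2}}\varpi_E^2\rangle\times U^1_{E_{\pm g}}$, flipping the $\chi$-datum value at $\varpi_E$ precisely at the coset fixing $\varpi_E$ (Lemmas \ref{exists root fixing varpi} and \ref{lemma-second}), the identity $N_{E/E_{\pm\sigma^{e/2}}}(\varpi_E)=-\varpi_E^2$ for the exceptional ramified coset, and the parity counts of Propositions \ref{parity of double coset with i=f/2} and \ref{parity sym unram not fixing varpiE}. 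Without these ingredients the product over symmetric orbits cannot absorb the sign while each factor remains a $\zeta$-datum. A minor point: the t-factors here are defined by signature characters and norm-kernel symbols (Proposition \ref{summary of t-factors}), not directly as Gauss sums.
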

We explain statement (\ref{main-theorem-module}) of the above theorem. The finite symplectic modules appear in the respective constructions of the extended maximal simple types inducing $ {}_F\Pi_\xi$ and ${}_D\Pi_\xi$ in (\ref{intro bijection of P and FPi}) and (\ref{intro bijection of P and DPi}) (see \cite[(2.5.4)]{BH-JLC}, or the summary in Section \ref{section local construction, attached pair}). Each of these modules admits an orthogonal decomposition, called a complete symmetric decomposition in this paper (Proposition \ref{complete decomp orthogonal}), whose components are parametrized by the same set $\Gamma_F\backslash \Phi$ parameterizing the factors in  (\ref{zeta-data factorize}) of the Theorem. The t-factors are, roughly speaking, defined by the symplectic signs attached to these components.

When proving Theorem \ref{intro-main-thm-1}, we pick a choice of characters $\{\zeta_{\lambda,\xi}\}_{\lambda\in \Gamma_F\backslash\Phi}$, where each character $\zeta_{\lambda,\xi}$ has values in terms of the t-factors of the corresponding component, the one indexed by $\lambda$. We then show that these characters constitute a set of $\zeta$-data. Moreover, using the multiplicativity of t-factors, the product of these $\zeta$-data, when restricted to $E^\times$, is equal to the rectifier ${}_D\nu_\xi$, whose values are given in the First and Second Comparison Theorems of \cite{BH-JLC}. Hence our result refines the one in \emph{loc. cit.}.

While the finite symplectic modules and their decompositions are also studied by the author in the split case \cite{thesis}, there are extra conditions on the components of these modules in the general case. These conditions come from the extra ramifications of the related compact subgroups in constructing the extended maximal simple types. The degrees of these ramifications depend on the residue degree $f(E/F)$ and, with other conditions similar to those in the split case, determine whether each component is trivial or not. This new phenomenon will be fully studied in Section \ref{chapter Finite symplectic modules}. In particular, when $E/F$ is totally ramified and $\xi$ is fixed, the finite symplectic modules are isomorphic to each other for all inner forms of $G^*$, a fact already known in \cite[Proposition 5.6]{BH-JLC}.

\subsection{Relation with previous results}\label{section-relation-previous}

The significance of the factorization of ${}_D\nu_\xi$ in Theorem \ref{intro-main-thm-1}(\ref{zeta-data factorize})  comes from \cite{thesis}, which proves an analogous factorization of the rectifier ${}_F\mu_\xi$ for the essentially tame local Langlands correspondence \cite{BH-ET1}.

We first recall from \emph{loc. cit.} that the rectifier ${}_F\mu_\xi$ measures the difference between the ``na{\"i}ve correspondence" and the essentially tame Langlands correspondence for $G^*$; more precisely, the Langlands parameter of ${}_F\Pi_\xi$ defined in (\ref{intro bijection of P and FPi}) is the induced representation
\begin{equation}\label{paramater-as-induced-repres}
  {\mathrm{Ind}}_{\mathcal{W}_E}^{\mathcal{W}_F}(\xi\cdot{}_F\mu_\xi)
\end{equation}
of the Weil group $\mathcal{W}_F$ of $F$, where $\xi\cdot{}_F\mu_\xi$  is regarded as a character of $\mathcal{W}_E$ by class field theory \cite{Tate-NTB}. In \cite[Theorem 1.1]{thesis}, the author proved that the rectifier ${}_F\mu_\xi$ admits a factorization
\begin{equation*}\label{chi-data factorize}
  {}_F\mu_\xi=\prod_{\lambda\in \Gamma_F\backslash
\Phi}\chi_{\lambda,\xi}|_{E^\times},
\end{equation*}
where $\{\chi_{\lambda,\xi}\}_{\lambda\in\Gamma_F\backslash \Phi}$ is a set of $\chi$-data, consisting of characters of $E_\lambda^\times$ satisfying the conditions similar to those of $\zeta$-data (see Section \ref{section Admissible embeddings of tori}).

With a collection of $\chi$-data, we follow \cite[Section 2.6]{LS} to construct an admissible embedding
$$I_{\{\chi_{\lambda,\xi}\}}:{}^LT\rightarrow {}^LG$$
 of the L-group ${}^LT$ of the maximal torus $T$ into the L-group ${}^LG$ of $G^*$. (For convenience, we call ${}^LT$ an L-torus in this paper.) Let $\tilde{\xi}:\mathcal{W}_F\rightarrow {}^LT$ be an L-homomorphism whose class is the parameter of the character $\xi$ of $E^\times=T(F)$ under the local Langlands correspondence of the torus $T$, i.e., the Artin reciprocity for $E^\times$ \cite{Tate-NTB}. In a previous result of the author \cite[Corollary 1.2]{thesis}, the Langlands parameter (\ref{paramater-as-induced-repres}) of ${}_F\Pi_\xi$ is isomorphic to the composition     $$I_{\{\chi_{\lambda,\xi}\}}\circ\tilde{\xi}:\mathcal{W}_F\rightarrow {}^LT\rightarrow {}^LG\xrightarrow{\text{natural proj.}}\mathrm{GL}_n(\mathbb{C})$$
    as a representation of $\mathcal{W}_F$. In other words, the essentially tame local Langlands correspondence for $G^*$ can be described by admissible embeddings of L-tori.

A set of $\zeta$-data is the `difference' of two sets of $\chi$-data, in the sense that, given a set $\{\chi_\lambda\}$  of $\chi$-data, we have
$$\{\chi'_\lambda\}\text{ is another set of }\chi\text{-data}\quad\Leftrightarrow\quad\{\chi_\lambda(\chi'_\lambda)^{-1}\}\text{ is a set of }\zeta\text{-data}.$$
If we define the local Langlands correspondence for $G$ as the composition of the local Langlands correspondence for $G^*$ and the Jacquet-Langlands correspondence $JL$, then we can express the Langlands parameter of ${}_D\Pi_\xi$ using an admissible embedding of L-tori, as follows.

\begin{cor}
  Let $\{\chi_{\lambda,\xi}\}$ and $\{\zeta_{\lambda,\xi}\}$ be respectively the
$\chi$-data and the $\zeta$-data associated to an admissible character
$\xi$. The Langlands parameter
  $${\mathrm{Ind}}_{\mathcal{W}_E}^{\mathcal{W}_F}(\xi\cdot{}_F\mu_\xi\cdot{}_D\nu_\xi)$$
  of ${}_D\Pi_\xi$ is isomorphic to
  $$I_{\{\chi_{\lambda,\xi}\cdot\zeta_{\lambda,\xi}\}}\circ\tilde{\xi}:\mathcal{W}_F\rightarrow
{}^LT\rightarrow {}^LG\xrightarrow{\text{natural proj.}}\mathrm{GL}_n(\mathbb{C})$$
  as a representation of $\mathcal{W}_F$.
\qed\end{cor}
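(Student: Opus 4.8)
The strategy is to deduce the Corollary by combining Theorem~\ref{intro-main-thm-1} with the results of \cite{thesis} and with the behaviour of admissible embeddings under a change of $\chi$-data. Throughout I use the characterization of $\zeta$-data recalled just above the Corollary: since $\{\chi_{\lambda,\xi}\}$ is a set of $\chi$-data and $\{\zeta_{\lambda,\xi}\}$ is a set of $\zeta$-data, the collection $\{\chi_{\lambda,\xi}\cdot\zeta_{\lambda,\xi}\}_{\lambda\in\Gamma_F\backslash\Phi}$ is again a set of $\chi$-data.

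First I would isolate from \cite{thesis} and \cite[Section~2.6]{LS} the following statement: for \emph{any} set of $\chi$-data $\{\psi_\lambda\}_{\lambda\in\Gamma_F\backslash\Phi}$, the composite
$$I_{\{\psi_\lambda\}}\circ\tilde{\xi}:\mathcal{W}_F\to{}^LT\to{}^LG\xrightarrow{\text{natural proj.}}\mathrm{GL}_n(\mathbb{C})$$
is isomorphic, as a representation of $\mathcal{W}_F$, to $\mathrm{Ind}_{\mathcal{W}_E}^{\mathcal{W}_F}\bigl(\xi\cdot\prod_{\lambda}\psi_\lambda|_{E^\times}\bigr)$. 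For $\psi_\lambda=\chi_{\lambda,\xi}$ this is \cite[Corollary~1.2]{thesis} together with the factorization ${}_F\mu_\xi=\prod_\lambda\chi_{\lambda,\xi}|_{E^\times}$ of \cite[Theorem~1.1]{thesis} and \eqref{paramater-as-induced-repres}. For a general set of $\chi$-data one invokes in addition the description in \cite[Section~2.6]{LS} of how an admissible embedding depends on its $\chi$-data: two sets of $\chi$-data differing by a set of $\zeta$-data $\{\zeta_\lambda\}$ yield composites with $\tilde{\xi}$ whose inducing characters differ by the twist by $\prod_\lambda\zeta_\lambda|_{E^\times}$, so the displayed isomorphism for one set of $\chi$-data forces it for all of them.

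Granting this, I would apply it with $\psi_\lambda=\chi_{\lambda,\xi}\cdot\zeta_{\lambda,\xi}$ and substitute $\prod_\lambda\chi_{\lambda,\xi}|_{E^\times}={}_F\mu_\xi$ together with $\prod_\lambda\zeta_{\lambda,\xi}|_{E^\times}={}_D\nu_\xi$, the latter being Theorem~\ref{intro-main-thm-1}(\ref{zeta-data factorize}), to obtain
$$I_{\{\chi_{\lambda,\xi}\cdot\zeta_{\lambda,\xi}\}}\circ\tilde{\xi}\ \cong\ \mathrm{Ind}_{\mathcal{W}_E}^{\mathcal{W}_F}\bigl(\xi\cdot{}_F\mu_\xi\cdot{}_D\nu_\xi\bigr)$$
as representations of $\mathcal{W}_F$. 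It remains to recognize the right-hand side as the Langlands parameter of ${}_D\Pi_\xi$ for the correspondence for $G$ obtained by transporting the essentially tame local Langlands correspondence for $G^*$ through $JL$. For this one unwinds the definitions: the parameter of ${}_F\Pi_\eta$ is $\mathrm{Ind}_{\mathcal{W}_E}^{\mathcal{W}_F}(\eta\cdot{}_F\mu_\eta)$ by \eqref{paramater-as-induced-repres}, while $JL({}_F\Pi_\eta)={}_D\Pi_{\eta\cdot{}_D\nu_\eta}$ by the definition of the rectifier; one takes $\eta$ with $\eta\cdot{}_D\nu_\eta=\xi$ and uses that ${}_D\nu_\xi$ is a tamely ramified quadratic character, so that the admissible pair associated to $\xi$, the attached finite symplectic modules, and hence both rectifiers are unchanged under the twist of $\xi$ by ${}_D\nu_\xi$, i.e. ${}_D\nu_{\xi\cdot{}_D\nu_\xi}={}_D\nu_\xi$ and ${}_F\mu_{\xi\cdot{}_D\nu_\xi}={}_F\mu_\xi$. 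This yields $\eta=\xi\cdot{}_D\nu_\xi$ and hence $\mathrm{Ind}_{\mathcal{W}_E}^{\mathcal{W}_F}(\eta\cdot{}_F\mu_\eta)=\mathrm{Ind}_{\mathcal{W}_E}^{\mathcal{W}_F}(\xi\cdot{}_F\mu_\xi\cdot{}_D\nu_\xi)$, which is the desired parameter.

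The only substantive input is Theorem~\ref{intro-main-thm-1}; everything else is the bookkeeping above. The step I expect to require the most care is the last one — checking directly from the formulas of \cite{BH-ET1} and \cite{BH-JLC} that ${}_F\mu$ and ${}_D\nu$ are insensitive to the tamely ramified twist by ${}_D\nu_\xi$ — together with making sure that the general form of \cite[Corollary~1.2]{thesis} for arbitrary $\chi$-data is legitimately available. With these in hand, the Corollary follows by comparing the two displayed isomorphisms.
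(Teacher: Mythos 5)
Your proposal is correct and follows the route the paper intends for this corollary: apply Proposition \ref{recover the character from induction} to the set of $\chi$-data $\{\chi_{\lambda,\xi}\cdot\zeta_{\lambda,\xi}\}$ and substitute the two factorizations ${}_F\mu_\xi=\prod_\lambda\chi_{\lambda,\xi}|_{E^\times}$ and ${}_D\nu_\xi=\prod_\lambda\zeta_{\lambda,\xi}|_{E^\times}$ from \cite[Theorem 1.1]{thesis} and Theorem \ref{zeta-data factor of BH-rectifier}. Your extra care in unwinding $JL^{-1}({}_D\Pi_\xi)={}_F\Pi_{\xi\cdot{}_D\nu_\xi}$ via the insensitivity of both rectifiers to tame twists is a legitimate check of the identification of the parameter, and the general form of the induction formula for arbitrary $\chi$-data that you worried about is exactly what Proposition \ref{recover the character from induction} provides.
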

Hence analogously we can describe the essentially tame local Langlands correspondence for $G$ by admissible embeddings of L-tori.

As a consequence, we show in Proposition \ref{rectifier decomp over K} that the factorization of ${}_D\nu_\xi$ in Theorem \ref{intro-main-thm-1}(\ref{zeta-data factorize}) is functorial, in the following sense. Let $K/F$ be an intermediate extension of $E/F$, so that if the pair $(E/F,\xi)$ is admissible over $F$, then $(E/K,\xi)$ is admissible over $K$ by definition. We denote the centralizer of $K^\times$ in $G(F)$ by $GL_{m_K}(D_K)$, where $D_K$ is a $K$-division algebra and $m_K$ a positive integer. If $\{\zeta_{\lambda,\xi}\}$ is the set of $\zeta$-data associated to $\xi$, then the partial product
$$
  \prod_{
    \lambda\in \Gamma_F\backslash \Phi,\,    \lambda|_{K^\times}\neq 1
  }\zeta_{\lambda,\xi}|_{E^\times}.
$$
(a product similar to Theorem \ref{intro-main-thm-1}.(\ref{zeta-data factorize}), with factors ranging over the characters being non-trivial on $K^\times$) is the rectifier ${}_{D_K}\nu_\xi$ of $\xi$ over $K$.

\begin{rmk}\label{characteristic}
As mentioned in \cite[Remark 1.3]{thesis}, we apply the condition char$(F)=0$ in this paper only because we refer to the theory of endoscopy. However, we do not actually need this condition for the part of the theory that we allude to, namely the admissible embedding and the transfer factor $\Delta_{\mathrm{III}_2}$, because these objects can be defined without any condition on char$(F)$. Therefore, we may treat this condition as a mild condition.
  \qed\end{rmk}

\subsection{Notations}\label{section Notations}

Throughout the paper, $F$ denotes a non-Archimedean local field of characteristic 0. Its ring of integers is $\mathfrak{o}_F$ with the maximal ideal $\mathfrak{p}_F$. The residue field $\mathbf{k}_F=\mathfrak{o}_F/\mathfrak{p}_F$ has $q$ elements and is of characteristic $p$. We denote by $v_F:F^\times\rightarrow \mathbb{Z}$ the discrete valuation on $F$. We denote by $\Gamma_F$ the absolute Galois group of $F$, and by $\mathcal{W}_F$ the Weil group of $F$.

The multiplicative group $F^\times$ decomposes into a product of subgroups
$$\left<\varpi_F\right>\times{\boldsymbol{\mu}}_F\times U_F^1 .$$ They are namely the group generated by a prime element $\varpi_F$, the group ${\boldsymbol{\mu}}_F$ of roots of unity of order prime to $p$, and the 1-unit group $U_F^1:=1+\mathfrak{p}_F$. We will identify ${\boldsymbol{\mu}}_F$ with $\mathbf{k}_F^\times$ in the natural way. We then write $U_F=U_F^0:={\boldsymbol{\mu}}_F\times U_F^1$ and $U_F^i:=1+\mathfrak{p}^i_F$ for each positive integer $i$. Let ${\boldsymbol{\mu}}_n$ be the group of $n$th roots of unity in the algebraic closure $\bar{F}$ of $F$, and ${{z}}_n$ be a choice of primitive $n$th root in ${\boldsymbol{\mu}}_n$.

The $F$-level of a character $\xi$ of $F^\times$ is the smallest integer $a\geq-1$ such that $\xi|_{U_F^{a+1}}$ is trivial. A character $\xi$ of $F^\times$ is called unramified if $\xi|_{U_F}$ is trivial, or equivalently, if its $F$-level is $-1$. It is called tamely ramified if $\xi|_{U_F^1}$ is trivial, or equivalently, if its
$F$-level is $0$.

Given a field extension $E/F$, we denote its ramification index by $e=e(E/F)$ and its residue degree by $f=f(E/F)$. We also denote by $tr_{E/F}$ and norm $N_{E/F}$ the trace and norm respectively.

 We fix an additive character $\psi_F$ of $F$ of level 0, which means that $\psi_F$ is trivial on $\mathfrak{p}_F$ but
is non-trivial on $\mathfrak{o}_F$. Hence $\psi_F |_{\mathfrak{o}_F}$ induces a non-trivial character of $\mathbf{k}_F$. We write $\psi_E=\psi_F\circ\mathrm{tr}_{E/F}$.

Suppose that $A$ is a central simple algebra over $F$. We denote the reduced trace by $\mathrm{trd}_{A/F}$ and the reduced norm by $\mathrm{Nrd}_{A/F}$.

 Given a set $X$, we denote its cardinality by $\#X$. If $H$ is a group and $X$ is a $H$-set, then we denote the action of $h\in H$ on $x\in X$ by $x\mapsto {}^hx$. The set of $H$-orbits is denoted by $H\backslash X$. If $\pi$ is a representation of $H$ (over a given field), we denote its equivalence class by $(H,\pi)$.

\section{Some basic setups}

\subsection{Root system}\label{section root system}

 Given a field extension $E/F$ of degree $n$, we let $T$ be the induced torus $\text{Res}_{E/F} \mathbb{G}_m$. We embed $T$ into $G$ as an elliptic maximal torus, and denote the image still by $T$. The choice of this embedding will be specific in Section \ref{subsection An embedding condition}, but at this moment this choice is irrelevant. Let $\Phi=\Phi(G,T)$ be the root system of $T$ in $G$. Following \cite[Section 3.1]{thesis}, we can denote each root in $\Phi$ by $\left[ \begin{smallmatrix}
g \\
h
\end{smallmatrix} \right]$ where $g=g\Gamma_E$ and $h=h\Gamma_E$ are distinct cosets in $\Gamma_F/\Gamma_E$. (We use the same notation $g$ for an element in $\Gamma_F$ and its $\Gamma_E$-cosets, for notation convenience.) The $\Gamma_F$-action on $\Phi$ is given by
$ x \cdot \left[ \begin{smallmatrix}
 g\\
 h
\end{smallmatrix}
\right] = \left[ \begin{smallmatrix}
xg\\
xh
\end{smallmatrix}\right].$
 For each root $\lambda\in\Phi$, we denote by $[\lambda]$ its $\Gamma_F$-orbit. Each $\Gamma_F$-orbit contains a root of the form
$\left[\begin{smallmatrix}1\\g\end{smallmatrix}\right]$ for some non-trivial coset $g\in \Gamma_F/\Gamma_E$.

For each root $\lambda \in \Phi$, we denote the stabilizers $\{ g \in \Gamma_F | {}^g \lambda = \lambda\}$ and $\{ g \in \Gamma_F | {}^g\lambda = \pm \lambda \}$ by $\Gamma_{\lambda} $ and $\Gamma_{\pm \lambda}$ respectively and their fixed fields by $E_{\lambda}$ and $ E_{\pm \lambda}$ respectively. We call a root  $\lambda$ \emph{symmetric} if $[E_{\lambda}:E_{\pm\lambda}] = 2$, and \emph{asymmetric} otherwise. Equivalently, $\lambda$ is symmetric if and only if $\lambda$ and $-\lambda$ are in the same $\Gamma_F$-orbit. Note that the symmetry of $\Phi$ is preserved by the $\Gamma_F$-action. Let
\begin{enumerate}[(i)]
\item $ \Gamma_F\backslash\Phi_{\mathrm{sym}}$ be the set of $\Gamma_F$-orbits of symmetric roots,
\item $ \Gamma_F\backslash\Phi_\mathrm{asym}$ be the set of $\Gamma_F$-orbits of asymmetric roots, and
\item $ \Gamma_F\backslash\Phi_\mathrm{asym/\pm}$ be the set of equivalence classes of asymmetric $\Gamma_F$-orbits by identifying $[\lambda]$ and $[-\lambda]$.
\end{enumerate}

We denote by $(\Gamma_E\backslash \Gamma_F/\Gamma_E)'$ the collection of non-trivial double cosets, and by $[g]$ the double coset $\Gamma_Eg\Gamma_E$. We can deduce the following proposition easily.

\begin{prop}\label{orbit of roots as double coset}
The map
$$\Gamma_F\backslash\Phi\rightarrow (\Gamma_E\backslash \Gamma_F/\Gamma_E)',\,[\lambda]=\left[\left[\begin{smallmatrix}1\\g\end{smallmatrix}\right]\right]\mapsto [g],$$
is a bijection between the set $\Gamma_F\backslash\Phi$ of $\Gamma_F$-orbits of the root system $\Phi$ and the set $(\Gamma_E\backslash \Gamma_F/\Gamma_E)'$ of non-trivial double cosets.
\end{prop}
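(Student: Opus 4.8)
The plan is to exhibit the stated map explicitly and check it is well-defined, injective, and surjective, reducing everything to elementary coset bookkeeping in $\Gamma_F$ relative to the subgroup $\Gamma_E$. First I would recall the notation from Section \ref{section root system}: a root is written $\left[\begin{smallmatrix}g\\h\end{smallmatrix}\right]$ with $g,h$ distinct cosets in $\Gamma_F/\Gamma_E$, and the $\Gamma_F$-action is $x\cdot\left[\begin{smallmatrix}g\\h\end{smallmatrix}\right]=\left[\begin{smallmatrix}xg\\xh\end{smallmatrix}\right]$. Since every $\Gamma_F$-orbit contains a representative of the form $\left[\begin{smallmatrix}1\\g\end{smallmatrix}\right]$ with $g$ a non-trivial coset (this is stated in the excerpt), the assignment $[\lambda]\mapsto[g]=\Gamma_Eg\Gamma_E$ at least makes sense on representatives, and lands in the non-trivial double cosets because $g\notin\Gamma_E$.

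Next I would verify well-definedness: if $\left[\begin{smallmatrix}1\\g\end{smallmatrix}\right]$ and $\left[\begin{smallmatrix}1\\g'\end{smallmatrix}\right]$ lie in the same $\Gamma_F$-orbit, then there is $x\in\Gamma_F$ with $x\cdot\left[\begin{smallmatrix}1\\g\end{smallmatrix}\right]=\left[\begin{smallmatrix}1\\g'\end{smallmatrix}\right]$, i.e. $x\Gamma_E=\Gamma_E$ and $xg\Gamma_E=g'\Gamma_E$; the first forces $x\in\Gamma_E$, and then $g'\in\Gamma_Eg\Gamma_E$, so $[g]=[g']$. Hence the map on $\Gamma_F\backslash\Phi$ is well-defined. (One should also note the roots here are \emph{unordered} only up to the orbit identification — actually a root $\left[\begin{smallmatrix}g\\h\end{smallmatrix}\right]$ is an ordered pair of distinct cosets, with $-\left[\begin{smallmatrix}g\\h\end{smallmatrix}\right]=\left[\begin{smallmatrix}h\\g\end{smallmatrix}\right]$; the map sends $[-\lambda]$ to the double coset $\Gamma_Eg^{-1}\Gamma_E$, which may differ from $[g]$, but this is irrelevant for the bijection with all of $(\Gamma_E\backslash\Gamma_F/\Gamma_E)'$ and only matters for the $\pm$-refinements.)

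For injectivity, suppose $[g]=[g']$, so $g'=\epsilon g\delta$ with $\epsilon,\delta\in\Gamma_E$. Then $\epsilon\cdot\left[\begin{smallmatrix}1\\g\end{smallmatrix}\right]=\left[\begin{smallmatrix}\epsilon\\ \epsilon g\end{smallmatrix}\right]=\left[\begin{smallmatrix}1\\ \epsilon g\delta\end{smallmatrix}\right]=\left[\begin{smallmatrix}1\\g'\end{smallmatrix}\right]$, using $\epsilon\Gamma_E=\Gamma_E$ and $\epsilon g\Gamma_E=\epsilon g\delta\Gamma_E$; hence $[\lambda]=[\lambda']$. For surjectivity, given a non-trivial double coset $[g]$ with $g\notin\Gamma_E$, the pair $\left[\begin{smallmatrix}1\\g\end{smallmatrix}\right]$ is a genuine root (the two cosets $\Gamma_E$ and $g\Gamma_E$ are distinct), and its image is $[g]$. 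I would also remark that the root system $\Phi(G,T)$ for $G$ an inner form of $\mathrm{GL}_n$ with $T=\mathrm{Res}_{E/F}\mathbb{G}_m$ coincides (over $\bar F$) with that of $\mathrm{GL}_n$, so the description of $\Phi$ in terms of pairs of cosets is insensitive to the inner twist — this is why "the choice of embedding is irrelevant at the moment."

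I do not anticipate a serious obstacle: the proposition is essentially a translation of the $\Gamma_F$-action on pairs of cosets into the double-coset language, and the word "easily" in the statement is warranted. The only point requiring a line of care is making sure the normalization "$E_\lambda$ is an extension of $E$ itself, not merely a conjugate" — which is built into choosing the representative $\left[\begin{smallmatrix}1\\g\end{smallmatrix}\right]$ with first coordinate the trivial coset — is compatible with the double-coset parametrization; since $\Gamma_\lambda$ for $\lambda=\left[\begin{smallmatrix}1\\g\end{smallmatrix}\right]$ is $\Gamma_E\cap g\Gamma_E g^{-1}$, its fixed field indeed contains $E$, so everything is consistent, and I would close with that remark.
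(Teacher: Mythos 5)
Your verification is correct and is exactly the elementary coset bookkeeping the paper has in mind when it states the proposition "can be deduced easily" without proof: well-definedness and injectivity via the observation that $x\Gamma_E=\Gamma_E$ forces $x\in\Gamma_E$, and surjectivity by taking $\left[\begin{smallmatrix}1\\g\end{smallmatrix}\right]$ for any $g\notin\Gamma_E$. Your side remarks on $[-\lambda]\mapsto[g^{-1}]$ and on $\Gamma_\lambda=\Gamma_E\cap g\Gamma_Eg^{-1}$ are also consistent with the paper's subsequent use of the bijection (symmetry of cosets and $E_g=E({}^gE)$), so nothing is missing.
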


We can therefore call $g\in \Gamma_F$ \emph{symmetric} if $[g]= [g^{-1}]$, and \emph{asymmetric} otherwise, so that the bijection in  Proposition \ref{orbit of roots as double coset} preserves symmetries on both sides. Let \begin{enumerate}[(i)]
\item $(\Gamma_E\backslash \Gamma_F/\Gamma_E)_\mathrm{sym}$ be the set of symmetric non-trivial double cosets,
\item $(\Gamma_E\backslash \Gamma_F/\Gamma_E)_\mathrm{asym}$ be the set of asymmetric non-trivial double cosets, and
\item $(\Gamma_E\backslash \Gamma_F/\Gamma_E)_{\mathrm{asym}/\pm}$ be the set of equivalence classes of $(\Gamma_E\backslash \Gamma_F/\Gamma_E)_\mathrm{asym}$ by identifying $[g]$ with $[g^{-1}]$.
\end{enumerate}

We choose subsets $\mathcal{D}_\mathrm{sym}$  and $ \mathcal{D}_{\mathrm{asym}/ \pm}$ of representatives in $\Gamma_F/\Gamma_E$ of $(\Gamma_E\backslash \Gamma_F/\Gamma_E)_\mathrm{sym}$ and  $(\Gamma_E\backslash \Gamma_F/\Gamma_E)_{\mathrm{asym}/\pm}$ respectively, and write
$$\mathcal{D}_\mathrm{asym}=\mathcal{D}_{\mathrm{asym}/ \pm}\sqcup\{g^{-1}|g\in \mathcal{D}_{\mathrm{asym}/ \pm}\}.$$
We also choose subsets $\mathcal{R}_\mathrm{sym}$ and $ \mathcal{R}_{\mathrm{asym}/ \pm}$  of representatives in $\Phi$ of orbits in
$ \Gamma_F\backslash\Phi_{\mathrm{sym}}$  and $ \Gamma_F\backslash\Phi_\mathrm{asym/\pm}$ respectively such that every root $\lambda\in\mathcal{R}_\pm:=\mathcal{R}_\mathrm{sym}\sqcup\mathcal{R}_{\mathrm{asym}/ \pm}$ is of the form $\left[\begin{smallmatrix}1\\g\end{smallmatrix}\right]$ for some $g\in\mathcal{D}_\pm:=\mathcal{D}_\mathrm{sym}\sqcup\mathcal{D}_{\mathrm{asym}/ \pm}$, and write $$\mathcal{R}_\mathrm{asym}= \mathcal{R}_{\mathrm{asym}/ \pm}\sqcup( -\mathcal{R}_{\mathrm{asym}/ \pm}).$$
Hence  $\mathcal{R}_\mathrm{sym}$, $\mathcal{R}_\mathrm{asym}$, and $ \mathcal{R}_{\mathrm{asym}/ \pm}$ correspond bijectively to $\mathcal{D}_\mathrm{sym}, \mathcal{D}_\mathrm{asym}$, and $ \mathcal{D}_{\mathrm{asym}/ \pm}$ respectively by the identification in Proposition \ref{orbit of roots as double coset}. Denote $E_g:=E_\lambda $ and $E_{\pm g}:=E_{\pm \lambda} $. Notice that $E_g= E({}^g E )$, composite field of $E$ and ${}^gE$.

\subsection{Galois groups}\label{section Galois groups}

 Let $E/F$ be a field extension of degree $n$. In most of the paper, we assume that $E/F$ is tamely ramified, which means that $p$ is coprime to $e$. By \cite[II.\S5]{Lang-ANT}, we can choose $\varpi_E$ and $\varpi_F$ such that
 \begin{equation}\label{e-th power of prime is also prime}
 \varpi_E^e={{z}}_{E/F}\varpi_F\text{, for some }{{z}}_{E/F}\in {\boldsymbol{\mu}}_E.
 \end{equation}

Choose in $\bar{F}^\times$ a primitive $e$th root of unity ${{z}}_e$ and an $e$th root ${{z}}_{E/F,e}$ of ${{z}}_{E/F}$. (We do not require that ${{z}}_{E/F,e}^a={{z}}_e$, if $a$ is the multiplicative order of ${{z}}_{E/F}$.) Denote $L=E[{{z}}_e,{{z}}_{E/F,e}]$ and $l=[L:E]$. With the choices of $\varpi_F$ and $\varpi_E$ as in (\ref{e-th power of prime is also prime}), we define the following $F$-automorphisms on $L$.
\begin{enumerate}[(i)]
\item $\phi: {{z}} \mapsto {{z}}^q \text{, for all }{{z}} \in {\boldsymbol{\mu}}_L\text{, and }\phi:\varpi_E \mapsto {{z}}_\phi\varpi_E$.
\item $\sigma : {{z}} \mapsto {{z}}\text{, for all }{{z}} \in {\boldsymbol{\mu}}_L\text{, and }\sigma:\varpi_E \mapsto {{z}}_e \varpi_E.$
\end{enumerate}
Here ${{z}}_\phi$ lies in ${\boldsymbol{\mu}}_E$ satisfying $({{z}}_\phi\varpi_E)^e={{z}}_{E/F}^q\varpi_F$. More generally, we write $^{\phi^i}\varpi_E = {{z}}_{\phi^i}\varpi_E$ where ${{z}}_{\phi^i}={{z}}_\phi^{1+q+\cdots+q^{i-1}}$ is an $e$th root of ${{z}}_{E/F}^{q^i-1}$.

Therefore, $\Gamma_{L/F} = \langle \sigma \rangle \rtimes \langle \phi \rangle $ with relation $\phi \circ \sigma \circ \phi^{-1}= \sigma^q$. Suppose that $\Gamma_{L/E} = \langle \sigma^c\phi^f \rangle $ for some integer $c$ satisfying the condition:
\begin{equation*}
  e\text{ divides }c\left(\frac{q^{fl}-1}{q^f-1}\right).
\end{equation*}
We can choose
$$\{\sigma^i\phi^j|i=0,\dots,e-1,\,j=0,\dots,f-1\}$$
as coset representatives for the quotient $\Gamma_{E/F}= \Gamma_F/\Gamma_E$. Moreover, elements in a fixed double coset are of the form $[\sigma^i\phi^j]$ with a fixed $j$ mod $f$.

\begin{prop}[{\cite[Proposition 3.3]{thesis}}]\label{properties of symmetric [g]}
  The double coset $[g]=[\sigma^{i}\phi^j] $ is symmetric only if $j=0$ or, when $f$ is even, $j=f/2$.
\qed\end{prop}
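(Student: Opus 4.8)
The plan is to work out explicitly when the double coset $[\sigma^i\phi^j]$ equals its inverse $[\sigma^i\phi^j]^{-1} = [\phi^{-j}\sigma^{-i}]$, using the semidirect product structure $\Gamma_{L/F} = \langle\sigma\rangle\rtimes\langle\phi\rangle$ with relation $\phi\sigma\phi^{-1} = \sigma^q$. First I would rewrite the inverse in the normal form $\sigma^a\phi^b$: since $\phi^{-j}\sigma^{-i} = \phi^{-j}\sigma^{-i}\phi^{j}\phi^{-j} = \sigma^{-iq^{-j}}\phi^{-j}$, the coset $[\sigma^i\phi^j]^{-1}$ is represented by $\sigma^{-iq^{-j}}\phi^{-j}$ (reading exponents of $\sigma$ modulo $e$ and exponents of $\phi$ modulo the order of $\phi$ in $\Gamma_{L/F}$). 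The remark just before the proposition records that elements of a fixed double coset all have the same $\phi$-exponent $j$ modulo $f$; hence a necessary condition for $[\sigma^i\phi^j] = [\sigma^i\phi^j]^{-1}$ is $-j \equiv j \pmod f$, i.e. $2j \equiv 0 \pmod f$.

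The second step is to solve $2j\equiv 0\pmod f$. If $f$ is odd this forces $j\equiv 0\pmod f$, so we may take $j=0$; if $f$ is even the solutions are $j\equiv 0$ and $j\equiv f/2\pmod f$, giving the two cases in the statement. Since in the chosen coset representatives $j$ ranges over $0,\dots,f-1$, these are exactly $j=0$ and (when $f$ even) $j=f/2$. I would emphasize that this is only a necessary condition — the proposition is stated as "only if" — so nothing more is needed: once the $\phi$-component obstruction is identified, we are done, and the cases where $j=0$ or $j=f/2$ may or may not actually be symmetric depending on the $\sigma$-component, which is not asserted here.

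The only mild subtlety is bookkeeping of exponents: one must make sure that the passage from $\phi^{-j}\sigma^{-i}$ to the normal form $\sigma^*\phi^{-j}$ does not alter the $\phi$-exponent (it does not, since conjugation by powers of $\phi$ preserves $\langle\sigma\rangle$), and that "fixed $j$ mod $f$" genuinely is a double-coset invariant — but this last fact is exactly the content of the displayed remark preceding the proposition, which I may invoke directly. Thus the main (and essentially only) point is the congruence $2j\equiv 0\pmod f$ extracted from comparing the $\phi$-parts of $[g]$ and $[g^{-1}]$; there is no real obstacle, and the argument is a short computation in the metacyclic group $\Gamma_{L/F}$.
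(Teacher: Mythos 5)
Your argument is correct: the $\phi$-exponent modulo $f$ is a double-coset invariant (the displayed remark before the proposition), inversion negates it since $\phi^{-j}\sigma^{-i}=\sigma^{-iq^{-j}}\phi^{-j}$, and $2j\equiv 0\pmod f$ yields exactly $j=0$ or, for $f$ even, $j=f/2$, which is all the "only if" statement asserts. The paper itself gives no proof here but simply cites \cite[Proposition 3.3]{thesis}; your computation in $\Gamma_{L/F}=\langle\sigma\rangle\rtimes\langle\phi\rangle$ is the natural argument and is essentially the one in that reference.
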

We call those symmetric $[\sigma^{i}]$ ramified and those symmetric $[\sigma^{i}\phi^{f/2}]$ unramified, and denote by $(\Gamma_E\backslash \Gamma_F/  \Gamma_E)_{\mathrm{sym-ram}}$ and $(\Gamma_E\backslash \Gamma_F/  \Gamma_E)_{\mathrm{sym-unram}}$ respectively the collections of symmetric ramified and symmetric unramified double cosets.

We provide several useful results concerning the parity of certain subsets in $\Gamma_E\backslash \Gamma_F /\Gamma_E$.

 \begin{prop}[{\cite[Propositions 3.4 and 3.5]{thesis}}]\label{parity of double coset with i=f/2}
 \begin{enumerate}[(i)]
   \item If $[g]$ is symmetric unramified, then the degree $[E_g:E]$ is odd.
   \item The parity of  $\#(\Gamma_E\backslash \Gamma_F/  \Gamma_E)_{\mathrm{sym-unram}}$ is equal to that of ${e(f-1)}$.
 \end{enumerate}
\qed\end{prop}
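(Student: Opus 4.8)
The plan is to work entirely inside the group $\Gamma_{L/F}=\langle\sigma\rangle\rtimes\langle\phi\rangle$ with the relation $\phi\sigma\phi^{-1}=\sigma^q$, using the explicit coset representatives $\{\sigma^i\phi^j\}$ for $\Gamma_{E/F}$ and the fact (from the previous discussion) that $\Gamma_{L/E}=\langle\sigma^c\phi^f\rangle$. For part (i), I would take a symmetric unramified double coset $[g]$ with $g=\sigma^i\phi^{f/2}$ and compute $E_g=E(\,{}^gE)$, equivalently the index of $\Gamma_{L/E}\cap{}^g\Gamma_{L/E}{}^{g^{-1}}$ in $\Gamma_{L/E}$. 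Conjugating the generator $\sigma^c\phi^f$ by $g=\sigma^i\phi^{f/2}$ and using the commutation relation reduces this to a congruence computation in $\mathbf{Z}/e\mathbf{Z}$ for the exponents of $\sigma$; the point is that $\phi^{f/2}$ acts on $\langle\sigma\rangle$ by raising to the $q^{f/2}$-th power, and since $[g]$ is symmetric one gets an identity forcing $q^{f/2}\equiv -1$ modulo the relevant modulus, from which $[E_g:E]=[\Gamma_{L/E}:\Gamma_{L/E}\cap{}^g\Gamma_{L/E}{}^{g^{-1}}]$ is forced to be odd (an even index would contradict the order of $q^{f/2}$ being even in the corresponding quotient). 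This is essentially the same bookkeeping as in \cite[Proposition 3.4]{thesis}, which I would cite and adapt.

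For part (ii), I would count $\#(\Gamma_E\backslash\Gamma_F/\Gamma_E)_{\mathrm{sym\text{-}unram}}$ by pairing up the asymmetric unramified double cosets $[g]\leftrightarrow[g^{-1}]$ (which contribute an even number) and comparing with the total number of unramified double cosets $[\sigma^i\phi^{f/2}]$, $i=0,\dots,e-1$ — of which there are exactly $e/\gcd$-many after identifying those in the same $\Gamma_E$-double coset. Since only the $j=f/2$ layer can contain symmetric unramified cosets (Proposition \ref{properties of symmetric [g]}), and the total count in that layer has a fixed parity determined by $e$ and the action of $\phi^{f/2}$ on $\langle\sigma\rangle/(\text{stabilizer})$, subtracting the even asymmetric contribution leaves a residue whose parity I would track to be that of $e(f-1)$. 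The factor $(f-1)$ enters because when $f=1$ there is no ``middle layer'' $j=f/2$ with $f/2$ a genuine nonzero residue distinct from $0$, so the set is empty (parity $0=e\cdot 0$), while for $f\geq 2$ even one gets the generic count proportional to $e$ modulo $2$. Again I would invoke \cite[Proposition 3.5]{thesis} for the split-case computation and note that the argument is insensitive to the inner form.

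The main obstacle I anticipate is bookkeeping the modular arithmetic carefully: one must keep straight the three moduli in play — $e$ (the exponent modulus for $\sigma$), $f$ (for $\phi$), and the constraint $e\mid c\,(q^{fl}-1)/(q^f-1)$ linking them — and ensure that passing between single cosets $\sigma^i\phi^j$ and double cosets $[\sigma^i\phi^j]$ does not silently change a parity. In particular, for part (ii) the delicate point is that two representatives $\sigma^i\phi^{f/2}$ and $\sigma^{i'}\phi^{f/2}$ lie in the same double coset iff $i'\equiv q^a i + (\text{stuff})\pmod e$ for some $a$, and counting the resulting orbits modulo the sign identification $[g]\sim[g^{-1}]$ is exactly where the parity of $e(f-1)$ is extracted; I would present this as a short orbit-counting lemma rather than a case analysis. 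Since both parts are quoted verbatim from \cite{thesis} in the split case and the Galois-theoretic setup here is identical, the cleanest write-up simply reduces the general statement to those propositions and remarks that nothing in their proofs used $G=G^*$.
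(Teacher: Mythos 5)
Two preliminary remarks. First, the paper itself offers no argument for this proposition: it is imported verbatim from \cite[Propositions 3.4 and 3.5]{thesis} with a \qed, and the implicit justification is exactly the one you give at the end — the statement concerns only the double-coset space $\Gamma_E\backslash\Gamma_F/\Gamma_E$ attached to $E/F$ (via $L$, $\sigma$, $\phi$), so nothing about the inner form enters. So your closing reduction-to-citation coincides with what the paper actually does, and as a write-up that would suffice.

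However, the sketch you offer in place of that citation has a genuine gap in part (ii), and the key link between (i) and (ii) is missing. The quantity whose parity equals that of $e$ is not the number of double cosets in the $j=f/2$ layer but the number of \emph{left} cosets $\sigma^i\phi^{f/2}\Gamma_E$, $i=0,\dots,e-1$, which is exactly $e$ (and the layer is empty when $f$ is odd — that is where the factor $f-1$ comes from). Your count of ``exactly $e/\gcd$-many'' double cosets tacitly assumes that all double cosets in this layer contain the same number of left cosets; this is unjustified, since the number of left cosets in $\Gamma_Eg\Gamma_E$ equals $[\Gamma_E:\Gamma_E\cap g\Gamma_Eg^{-1}]=[E_g:E]$ and varies with $i$. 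Subtracting an even number of asymmetric classes from such a count would control the parity of $e/\gcd$, not of $e$. The correct derivation uses part (i): the layer is stable under the double-coset partition and under $g\mapsto g^{-1}$ (because $-f/2\equiv f/2 \bmod f$); an asymmetric class and its inverse class are distinct, lie in the layer, and satisfy $[E_{g^{-1}}:E]=[E_g:E]$ (since ${}^g E_{g^{-1}}=E_g$), so together they contribute an even number of left cosets; each symmetric class contributes $[E_g:E]$ left cosets, which is odd by (i). Hence $\#(\Gamma_E\backslash\Gamma_F/\Gamma_E)_{\mathrm{sym-unram}}\equiv e \pmod 2$ when $f$ is even, as required — but without invoking (i) and without counting left cosets your subtraction does not close. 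For part (i) itself, your mechanism (symmetry forces $q^{f/2}\equiv-1$ modulo the relevant modulus, whence odd index) is asserted rather than derived: the content of the cited Proposition 3.4 is the explicit congruence in $\mathbb{Z}/e\mathbb{Z}$ identifying $[E_g:E]$ as the least $k$ for which the $\sigma$-exponents of $(\sigma^c\phi^f)^k$ and of its conjugate by $g=\sigma^i\phi^{f/2}$ agree (here it helps to note $E_g\subseteq L$ with $L/E$ unramified), followed by a 2-adic analysis of that congruence using the symmetry of $[g]$; your sketch states the conclusion of that analysis rather than performing it.
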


\begin{lem}\label{exists root fixing varpi}
  Suppose that $f$ is even. The following are equivalent.
  \begin{enumerate}[(i)]
    \item There exists $\sigma^i \phi^{f/2} \in W_{F[\varpi_E]}$ for some $i$.
      \label{exists root fixing varpi exist}
    \item  ${{z}}_{\phi^{f/2}} $ is an $e$th root of unity. \label{exists root fixing varpi root}
    \item  ${{z}}_{E/F} \in K_+$, where $K_+/F$ is unramified of degree $f/2$. \label{exists root fixing varpi quad}
    \item $f_\varpi :=[E:F[\varpi_E]]$ is even. \label{exists root fixing varpi even}
  \end{enumerate}
\end{lem}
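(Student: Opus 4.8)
The plan is to prove the cycle of implications $(\ref{exists root fixing varpi exist})\Rightarrow(\ref{exists root fixing varpi root})\Rightarrow(\ref{exists root fixing varpi quad})\Rightarrow(\ref{exists root fixing varpi even})\Rightarrow(\ref{exists root fixing varpi exist})$, working throughout with the explicit description of $\Gamma_{L/F}=\langle\sigma\rangle\rtimes\langle\phi\rangle$ and the formulas ${}^{\phi^i}\varpi_E={{z}}_{\phi^i}\varpi_E$, ${}^{\sigma^i}\varpi_E={{z}}_e^i\varpi_E$ established in Section \ref{section Galois groups}. First I would record the effect of a general element $\sigma^i\phi^j$ on $\varpi_E$: since $\phi^j$ sends $\varpi_E\mapsto{{z}}_{\phi^j}\varpi_E$ and then $\sigma^i$ multiplies by ${{z}}_e^i$, we get ${}^{\sigma^i\phi^j}\varpi_E={{z}}_e^i{{z}}_{\phi^j}\varpi_E$ (using that $\sigma$ fixes all roots of unity). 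Hence $\sigma^i\phi^j\in\Gamma_{F[\varpi_E]}$ — i.e. fixes $\varpi_E$ — precisely when ${{z}}_e^i{{z}}_{\phi^j}=1$, that is when ${{z}}_{\phi^j}$ is an $e$th root of unity (namely ${{z}}_e^{-i}$). Specializing $j=f/2$ gives immediately the equivalence $(\ref{exists root fixing varpi exist})\Leftrightarrow(\ref{exists root fixing varpi root})$: such an $i$ exists iff ${{z}}_{\phi^{f/2}}\in{\boldsymbol{\mu}}_e$, since ${{z}}_e$ generates the group of $e$th roots of unity inside ${\boldsymbol{\mu}}_L$.

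Next, for $(\ref{exists root fixing varpi root})\Leftrightarrow(\ref{exists root fixing varpi quad})$ I would use that ${{z}}_{\phi^{f/2}}$ is by definition an $e$th root of ${{z}}_{E/F}^{q^{f/2}-1}$. Thus ${{z}}_{\phi^{f/2}}$ is an $e$th root of unity iff ${{z}}_{E/F}^{q^{f/2}-1}=1$, i.e. iff the order of ${{z}}_{E/F}$ (a root of unity in ${\boldsymbol{\mu}}_E$, of order prime to $p$) divides $q^{f/2}-1$. Now ${{z}}_{E/F}\in{\boldsymbol{\mu}}_E$ lies in the unramified subextension of $E/F$ of residue degree $f$, and an element of ${\boldsymbol{\mu}}_E$ lies in the unramified subfield $K_+/F$ of degree $f/2$ exactly when its order divides $\#{\boldsymbol{\mu}}_{K_+}=q^{f/2}-1$; this uses that ${\boldsymbol{\mu}}_F$ is identified with $\mathbf{k}_F^\times$ and that $\mathbf{k}_{K_+}^\times$ is the subgroup of $\mathbf{k}_E^\times$ of elements satisfying $x^{q^{f/2}}=x$. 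This gives $(\ref{exists root fixing varpi root})\Leftrightarrow(\ref{exists root fixing varpi quad})$.

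For $(\ref{exists root fixing varpi quad})\Leftrightarrow(\ref{exists root fixing varpi even})$ I would analyze the field $F[\varpi_E]$: it is totally ramified over $F$ of degree $e$ (since $\varpi_E^e={{z}}_{E/F}\varpi_F$ with ${{z}}_{E/F}$ a unit, $\varpi_E$ is a uniformizer and $[F[\varpi_E]:F]\leq e$; equality holds by comparing valuations), so $E/F[\varpi_E]$ is the unramified extension of degree $f_\varpi=f(E/F[\varpi_E])$ inside $E$, and $f=e(E/F[\varpi_E])\cdot f_\varpi$ forces $e(E/F[\varpi_E])=1$ only after checking ramification indices — more precisely, $e(F[\varpi_E]/F)=e$ and $e(E/F)=e$ so $e(E/F[\varpi_E])=1$, whence $f_\varpi=f$ is forced, which is wrong; so instead the right statement is that $F[\varpi_E]$ contains the totally ramified part and $E=F[\varpi_E]\cdot(\text{unramified of degree }f)$, and one must track which unramified subfield is already inside $F[\varpi_E]$. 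The precise claim to extract is: ${{z}}_{E/F}\in K_+$ (the unramified degree-$f/2$ subfield of $E$) if and only if $F[\varpi_E]\supseteq K_+$, if and only if $f_\varpi=[E:F[\varpi_E]]$ divides $f/2$, which, combined with $f_\varpi\mid f$ and a parity/divisibility bookkeeping, is equivalent to $f_\varpi$ being even. I expect this last equivalence $(\ref{exists root fixing varpi quad})\Leftrightarrow(\ref{exists root fixing varpi even})$ to be the main obstacle: one must pin down exactly how the unramified subfields of $E$ sit inside $F[\varpi_E]$, using the relation $\varpi_E^e={{z}}_{E/F}\varpi_F$ to see that adjoining $\varpi_E$ to $F$ also adjoins precisely the unramified subfield generated by (a suitable root related to) ${{z}}_{E/F}$, and then translate the condition ``${{z}}_{E/F}\in K_+$'' into ``$f_\varpi$ even'' via the tower $F\subseteq F[\varpi_E]\subseteq E$ together with Proposition \ref{parity of double coset with i=f/2} if needed for the parity count. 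Once that dictionary is set up the equivalence is a short divisibility argument, and the cycle closes.
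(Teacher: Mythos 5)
Your steps (i)$\Leftrightarrow$(ii) and (ii)$\Leftrightarrow$(iii) are correct and are essentially the paper's argument: fixing $\varpi_E$ means ${{z}}_e^i{{z}}_{\phi^{f/2}}=1$, and ${{z}}_{\phi^{f/2}}^e={{z}}_{E/F}^{q^{f/2}-1}$ reduces (ii) to ${{z}}_{E/F}^{q^{f/2}-1}=1$, i.e.\ ${{z}}_{E/F}\in K_+$. The trouble is (iii)$\Leftrightarrow$(iv), which you yourself flag as the main obstacle and never actually carry out; moreover the explicit chain of equivalences you do write down there is wrong. First, $F[\varpi_E]$ is not totally ramified of degree $e$ over $F$ in general: it contains ${{z}}_{E/F}=\varpi_E^e/\varpi_F$, hence the unramified subfield $F[{{z}}_{E/F}]$, which is nontrivial unless ${{z}}_{E/F}\in F$ (you notice the resulting contradiction but do not repair it cleanly). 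Second, the asserted equivalences ``${{z}}_{E/F}\in K_+$ iff $F[\varpi_E]\supseteq K_+$ iff $f_\varpi\mid f/2$, which is equivalent to $f_\varpi$ even'' are false: $K_+\subseteq F[\varpi_E]$ would force $f/2$ to divide $f(F[\varpi_E]/F)$, i.e.\ $f_\varpi\mid 2$, which is much stronger than ${{z}}_{E/F}\in K_+$ (for instance $f=8$, $f_\varpi=4$ satisfies (iii) and (iv) but not $K_+\subseteq F[\varpi_E]$); and neither ``$f_\varpi\mid f/2$'' nor ``$F[\varpi_E]\supseteq K_+$'' is equivalent to ``$f_\varpi$ even'' (take $f=8$, $f_\varpi=1$ or $f_\varpi=8$).

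The missing ingredient, which is exactly what the paper uses, is the identity $f(F[\varpi_E]/F)=f(F[{{z}}_{E/F}]/F)=f/f_\varpi$. Indeed $X^e-{{z}}_{E/F}\varpi_F$ is Eisenstein over the unramified extension $F[{{z}}_{E/F}]$, so $F[\varpi_E]/F[{{z}}_{E/F}]$ is totally ramified and the maximal unramified subextension of $F[\varpi_E]/F$ is precisely $F[{{z}}_{E/F}]$; also $e(E/F[\varpi_E])=1$, so $[E:F[\varpi_E]]=f_\varpi=f(E/F[\varpi_E])$ and hence $f(F[\varpi_E]/F)=f/f_\varpi$. With this, (iii) says $F[{{z}}_{E/F}]\subseteq K_+$, i.e.\ $f/f_\varpi$ divides $f/2$, i.e.\ $f_\varpi$ is even, which is (iv). Note the containment goes the opposite way from what you wrote: it is the unramified part of $F[\varpi_E]$ that must lie inside $K_+$, not $K_+$ inside $F[\varpi_E]$.
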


\begin{proof}(\ref{exists root fixing varpi exist}) is equivalent to (\ref{exists root fixing varpi root}) since ${}^{\sigma^i \phi^{f/2}}\varpi_E={{z}}^i_e {{z}}_{\phi^{f/2}}\varpi_E$. To show that (\ref{exists root fixing varpi quad}) implies (\ref{exists root fixing varpi root}), we recall that ${{z}}_{\phi^{f/2}}$ is an $e$th root of ${{z}}^{q^{f/2}-1}_{E/F}$. If ${{z}}_{E/F} \in K_+$, then ${{z}}^{q^{f/2}-1}_{E/F}=1$ and ${{z}}_{\phi^{f/2}}$ is an $e$th root of unity. The converse is similar. To show the equivalence of (\ref{exists root fixing varpi quad}) and (\ref{exists root fixing varpi even}), we notice that $f(F[\varpi_E]/F) = f(F[{{z}}_{E/F}]/F) = f/f_\varpi$. Hence that $F[{{z}}_{E/F}] \subseteq K_+ $ is equivalent to that $ f_\varpi$ is even.
\end{proof}

\begin{lem}\label{lemma-second}
  Suppose that $g=\sigma^i \phi^{f/2}$ satisfies the conditions in Lemma \ref{exists root fixing varpi}.
  \begin{enumerate}[(i)]
    \item The double coset $[\sigma^i \phi^{f/2}]$ is automatically symmetric. \label{lemma-second-auto-sym}
    \item The set $(\Gamma_E\backslash \Gamma_{F[\varpi_E]}/\Gamma_E)_{\mathrm{sym-unram}}=(\Gamma_E\backslash \Gamma_{F}/\Gamma_E)_{\mathrm{sym-unram}}\cap (\Gamma_E\backslash \Gamma_{F[\varpi_E]}/\Gamma_E )$  contains a single element $[\sigma^i \phi^{f/2}]$. \label{lemma-second-single}
  \end{enumerate}
\end{lem}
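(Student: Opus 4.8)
The plan is to prove (i) by a short computation showing $g^2\in\Gamma_E$ for $g=\sigma^i\phi^{f/2}$, and then to deduce (ii) by identifying $\Gamma_E\backslash\Gamma_{F[\varpi_E]}/\Gamma_E$ with the cyclic Galois group $\Gamma_{E/F[\varpi_E]}$ and locating the class of $g$ inside it. For (i): by Proposition \ref{orbit of roots as double coset} and the definition of symmetry that follows it, $[\sigma^i\phi^{f/2}]$ is symmetric iff $\Gamma_E g\Gamma_E=\Gamma_E g^{-1}\Gamma_E$, and for this it is enough that $g^{-1}\in\Gamma_E g$, hence enough that $g^2\in\Gamma_E$ (since then $g^{-1}=(g^2)^{-1}g$). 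By hypothesis (Lemma \ref{exists root fixing varpi}) $g$ fixes $\varpi_E$, so $g^2$ fixes $\varpi_E$; and $g^2$ acts on ${\boldsymbol{\mu}}_L$ by $z\mapsto z^{q^f}$, since $\sigma$ is trivial on roots of unity and $\phi$ raises them to the $q$th power, so $g^2$ fixes ${\boldsymbol{\mu}}_E$, a group of order $q^f-1$. As $E/F$ is tame, $E=F({\boldsymbol{\mu}}_E,\varpi_E)$ (with $F({\boldsymbol{\mu}}_E)$ the maximal unramified subextension, over which $\varpi_E$ generates $E$), so $g^2$ fixes $E$, that is $g^2\in\Gamma_E$; this proves (i).

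For (ii), note first that $\varpi_E$ is a common uniformizer of $E$ and of $F[\varpi_E]$, so $E/F[\varpi_E]$ is unramified of degree $f_\varpi$, which is even by hypothesis (Lemma \ref{exists root fixing varpi}). Hence $\Gamma_E$ is normal in $\Gamma_{F[\varpi_E]}$ and $\Gamma_E\backslash\Gamma_{F[\varpi_E]}/\Gamma_E=\Gamma_{F[\varpi_E]}/\Gamma_E\cong\Gamma_{E/F[\varpi_E]}$ is cyclic of even order $f_\varpi$. In a cyclic group of even order precisely two elements are self-inverse, the identity and the element of order $2$; as a double coset is symmetric iff it is self-inverse, there is a unique non-trivial symmetric double coset in $\Gamma_E\backslash\Gamma_{F[\varpi_E]}/\Gamma_E$, namely the one of order $2$. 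By (i) --- more precisely by $g^2\in\Gamma_E$ together with $g\notin\Gamma_E$, the latter because the $\phi$-exponent $f/2$ of $g$ is $\not\equiv 0\bmod f$ while those of $\Gamma_E=\langle\sigma^c\phi^f\rangle$ are $\equiv 0$ --- the coset $g\Gamma_E$ has order $2$, so it is this unique non-trivial symmetric double coset. Finally, $E/F[\varpi_E]$ being unramified, its inertia is trivial, so there is no non-trivial \emph{ramified} symmetric double coset over $F[\varpi_E]$, whence every non-trivial symmetric one is unramified; thus the left-hand set equals $\{[\sigma^i\phi^{f/2}]\}$.

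It remains to match this with the right-hand side. By Proposition \ref{properties of symmetric [g]} and the definition following it, a double coset of $\Gamma_E$ in $\Gamma_F$ is symmetric-unramified iff it is symmetric with $\phi$-exponent $f/2$; and symmetry is intrinsic to a double coset --- it is the equality $\Gamma_E x\Gamma_E=\Gamma_E x^{-1}\Gamma_E$, whose two terms lie in $\Gamma_{F[\varpi_E]}$ as soon as $x$ does --- so it may be tested inside $\Gamma_{F[\varpi_E]}$ or inside $\Gamma_F$ indifferently. Therefore $[\sigma^i\phi^{f/2}]$, being symmetric by (i), having $\phi$-exponent $f/2$, and being represented by $g\in\Gamma_{F[\varpi_E]}$, lies in $(\Gamma_E\backslash\Gamma_F/\Gamma_E)_{\mathrm{sym-unram}}\cap(\Gamma_E\backslash\Gamma_{F[\varpi_E]}/\Gamma_E)$; conversely any element of this intersection is a symmetric double coset represented in $\Gamma_{F[\varpi_E]}$, hence equals the unique such one. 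This gives the asserted equality of the two sets, with common value $\{[\sigma^i\phi^{f/2}]\}$. The part I expect to require most care is this matching and the preceding reduction: one must read ``symmetric'' and ``unramified'' consistently for double cosets of $\Gamma_E$ whether the ambient group is $\Gamma_F$ or $\Gamma_{F[\varpi_E]}$, and invoke the ramification identities $e(F[\varpi_E]/F)=e$ and $f(F[\varpi_E]/F)=f/f_\varpi$ (from the proof of Lemma \ref{exists root fixing varpi}) to recognise $E/F[\varpi_E]$ as unramified of even degree. Part (i) is by comparison a routine computation.
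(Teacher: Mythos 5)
Your proof is correct. Part (\ref{lemma-second-auto-sym}) is essentially the paper's own argument in different clothing: the paper checks that $\sigma^i\phi^{f/2}$ and its inverse act identically on $\varpi_E$ and on ${\boldsymbol{\mu}}_E$ (using $q^{-f/2}\equiv q^{f/2}\bmod q^f-1$), which is exactly your statement that $g^2$ acts trivially on $E=F({\boldsymbol{\mu}}_E,\varpi_E)$, i.e.\ $g^2\in\Gamma_E$. For part (\ref{lemma-second-single}) you take a genuinely different route. The paper's proof is a one-line appeal to Lemma \ref{exists root fixing varpi}.(\ref{exists root fixing varpi root}): a representative $\sigma^{i'}\phi^{f/2}$ lies in $\Gamma_{F[\varpi_E]}$ exactly when ${{z}}_e^{i'}{{z}}_{\phi^{f/2}}=1$, and this pins down $i'$ modulo $e$, hence singles out the one double coset with ${{z}}_{\phi^{f/2}}={{z}}_e^{-i}$. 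You instead note that $E/F[\varpi_E]$ is unramified, hence Galois with cyclic group of even order $f_\varpi$, so that $\Gamma_E$ is normal in $\Gamma_{F[\varpi_E]}$, double cosets inside $\Gamma_{F[\varpi_E]}$ are single cosets, and the unique non-trivial self-inverse one is the order-two coset $g\Gamma_E$. Both arguments are valid and short; yours avoids any computation with the roots of unity ${{z}}_e,{{z}}_{\phi^{f/2}}$ but uses normality of $\Gamma_E$ in $\Gamma_{F[\varpi_E]}$ (which the paper never needs), and it actually proves the slightly stronger fact that $[g]$ is the \emph{only} non-trivial symmetric double coset contained in $\Gamma_{F[\varpi_E]}$, whereas the paper's computation identifies the coset explicitly. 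One cosmetic remark: your intermediate reading of $(\Gamma_E\backslash \Gamma_{F[\varpi_E]}/\Gamma_E)_{\mathrm{sym-unram}}$ as an intrinsic notion over the base $F[\varpi_E]$ is not something the paper defines -- the displayed equality in the statement is in effect the definition of that symbol -- but this is harmless, since your final paragraph proves directly that the intersection $(\Gamma_E\backslash\Gamma_F/\Gamma_E)_{\mathrm{sym-unram}}\cap(\Gamma_E\backslash\Gamma_{F[\varpi_E]}/\Gamma_E)$ equals $\{[\sigma^i\phi^{f/2}]\}$, which is the assertion needed later (e.g.\ in Proposition \ref{parity sym unram not fixing varpiE}).
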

\begin{proof}
  For (\ref{lemma-second-auto-sym}), we consider the actions of $\sigma^i \phi^{f/2}$ and its inverse $(\sigma^i \phi^{f/2})^{-1}$ on $E$. We certainly have ${}^{\sigma^i \phi^{f/2}}\varpi_E={}^{({\sigma^i \phi^{f/2}})^{-1}}\varpi_E=\varpi_E$ by definition. We also have ${}^{\sigma^i \phi^{f/2}}{{z}}={}^{({\sigma^i \phi^{f/2}})^{-1}}{{z}}={{z}}^{q^{f/2}}$ for all $z\in \boldsymbol{\mu}_E$. Therefore $\sigma^i \phi^{f/2}\Gamma_E=(\sigma^i \phi^{f/2})^{-1}\Gamma_E$ and in particular $[\sigma^i \phi^{f/2}]=[(\sigma^i \phi^{f/2})^{-1}]$. For (\ref{lemma-second-single}), we know by Lemma \ref{exists root fixing varpi}.(\ref{exists root fixing varpi root}) that the double coset is the one containing $\sigma^i \phi^{f/2}$ where $ {{z}}_{\phi^{f/2}}={{z}}^{-i}_e $.
\end{proof}

\begin{prop}\label{parity sym unram not fixing varpiE}
When $f$ is even, the parity of the cardinality of $(\Gamma_E\backslash \Gamma_F /\Gamma_E)_{\mathrm{sym-unram}}- \Gamma_{F[\varpi_E]}$ is equal to ${e+f_\varpi-1}$.
\end{prop}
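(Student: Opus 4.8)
The plan is to express the quantity in question as $\#(\Gamma_E\backslash\Gamma_F/\Gamma_E)_{\mathrm{sym\text{-}unram}}$ minus the number of symmetric unramified double cosets that land inside $\Gamma_{F[\varpi_E]}$, and then to evaluate both terms mod $2$ using Proposition \ref{parity of double coset with i=f/2}(ii) together with the dichotomy of Lemma \ref{exists root fixing varpi}. First I would record the basic bookkeeping. Since $\varpi_E\in E$ we have $F[\varpi_E]\subseteq E$, hence $\Gamma_E\subseteq\Gamma_{F[\varpi_E]}$; therefore $\Gamma_{F[\varpi_E]}$ is a union of $\Gamma_E$-double cosets and the condition ``$[g]\subseteq\Gamma_{F[\varpi_E]}$'' is well defined (it holds for one representative of $[g]$ iff for all, since the identity of $\Gamma_E$ witnesses $g\in[g]$). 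Consequently $(\Gamma_E\backslash\Gamma_F/\Gamma_E)_{\mathrm{sym\text{-}unram}}-\Gamma_{F[\varpi_E]}$ is exactly the set of symmetric unramified double cosets $[g]$ with $g\notin\Gamma_{F[\varpi_E]}$, and its cardinality is
$$\#(\Gamma_E\backslash\Gamma_F/\Gamma_E)_{\mathrm{sym\text{-}unram}}-\#\big((\Gamma_E\backslash\Gamma_{F[\varpi_E]}/\Gamma_E)_{\mathrm{sym\text{-}unram}}\big),$$
where I have used the identity of Lemma \ref{lemma-second}\,(ii) to identify the subtracted count with the symmetric unramified double cosets contained in $\Gamma_{F[\varpi_E]}$.

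Next I would split according to Lemma \ref{exists root fixing varpi}. If $f_\varpi$ is even, then by Lemma \ref{exists root fixing varpi} some $\sigma^i\phi^{f/2}$ lies in $\Gamma_{F[\varpi_E]}$, so Lemma \ref{lemma-second}\,(ii) applies and the subtracted count equals $1$. If $f_\varpi$ is odd, then by Lemma \ref{exists root fixing varpi} no element $\sigma^i\phi^{f/2}$ lies in $\Gamma_{F[\varpi_E]}$; since every symmetric unramified double coset is by definition of the form $[\sigma^i\phi^{f/2}]$ and all its representatives have $\phi$-exponent $\equiv f/2\pmod f$, none of these double cosets is contained in $\Gamma_{F[\varpi_E]}$, so the subtracted count equals $0$. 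In both cases the subtracted count is $\equiv f_\varpi-1\pmod 2$ (it is $1$ when $f_\varpi$ is even and $0$ when $f_\varpi$ is odd).

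Finally I would feed in Proposition \ref{parity of double coset with i=f/2}\,(ii): $\#(\Gamma_E\backslash\Gamma_F/\Gamma_E)_{\mathrm{sym\text{-}unram}}\equiv e(f-1)\pmod 2$, and since $f$ is even this is $\equiv e\pmod 2$. Hence the cardinality in question is $\equiv e-(f_\varpi-1)\equiv e+f_\varpi-1\pmod 2$, which is the claim. (Concretely: when $f_\varpi$ is even it is $\equiv e-1\equiv e+f_\varpi-1$, and when $f_\varpi$ is odd it is $\equiv e\equiv e+f_\varpi-1$.)

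I do not expect a real obstacle here: the substance is already packaged in Lemmas \ref{exists root fixing varpi}, \ref{lemma-second} and Proposition \ref{parity of double coset with i=f/2}, and the argument is the mod-$2$ accounting above. The only points needing care — and the ones I would state carefully in the write-up — are (a) checking that the set difference in the statement really counts the symmetric unramified double cosets disjoint from $\Gamma_{F[\varpi_E]}$, which rests on $\Gamma_E\subseteq\Gamma_{F[\varpi_E]}$, and (b) keeping track that the hypothesis ``$f$ even'' is used twice, once so that symmetric unramified cosets exist at all and once to simplify $e(f-1)$ modulo $2$.
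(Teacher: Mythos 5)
Your proposal is correct and follows essentially the same route as the paper: the paper's proof likewise combines Lemma \ref{exists root fixing varpi} (a symmetric unramified coset meets $\Gamma_{F[\varpi_E]}$ iff $f_\varpi$ is even, and by Lemma \ref{lemma-second} it is then unique) with Proposition \ref{parity of double coset with i=f/2}(ii) (total count $\equiv e(f-1)\equiv e \pmod 2$ since $f$ is even), and concludes by the same mod-$2$ accounting. Your write-up merely makes explicit the bookkeeping (that $\Gamma_E\subseteq\Gamma_{F[\varpi_E]}$ so the set difference counts whole double cosets) that the paper leaves implicit.
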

\begin{proof}
Recall
\begin{enumerate}[(i)]
  \item by Lemma \ref{exists root fixing varpi} that there exists $\sigma^i \phi^{f/2} \in W_{F[\varpi_E]}$ if and only if $f_\varpi$ is even, and
      \item by Proposition \ref{parity of double coset with i=f/2} that the parity of the number of symmetric $[\sigma^i\phi^{f/2}]$ is the same as that of $e$.
\end{enumerate}
By combining these facts, we have the assertion.
\end{proof}

\subsection{Division algebra}\label{Conjugate relations in a division algebra}

Let $D$ be a division algebra over $F$ of dimension $n^2$. Denote its unique maximal order by $\mathfrak{o}_D$ and the maximal ideal of $\mathfrak{o}_D$ by $\mathfrak{p}_D$. Suppose that the Hasse-invariant of $D$ is $h=h(D)$, so that  $\gcd(n,h)=1$. By \cite[(14.5) Theorem]{reiner-max-order}, we can choose a primitive $(q^n-1)$th root ${z}$ of unity in $D$ and a uniformizer $\varpi_D$ such that
\begin{equation}\label{ad-varpi-D-on-zeta}
  \varpi_D^n=\varpi_F\text{ and }\varpi_D{z}\varpi_D^{-1}={z}^{q^h}.
\end{equation}
We write $K_n=F[z]$ and ${\boldsymbol{\mu}}_D=\left<z\right>$, then $K_n$ is a maximal unramified extension (of degree $n$) in $D$, and ${\boldsymbol{\mu}}_D$ is a group of roots of unity of order $q^n-1$, both defined up to conjugacy by $D^\times$. Therefore, the conjugation of $\varpi_D$ acts on $K_n$ as the $h$th power of the Frobenius automorphism, i.e.,
$$\varpi_Du\varpi_D^{-1}={}^{\phi^h}u\text{ for all }u\in K_n.$$
We write $U_D^i:=1+\mathfrak{p}^i_D$ for all positive integer $i$. The multiplicative subgroup $D^\times$ hence decomposes into a semi-direct product
$$(\left<\varpi_D\right>\ltimes {\boldsymbol{\mu}}_D)\ltimes U_D^1.$$

Let $E/F$ be a tamely ramified field extension of degree $n$, and let $K$ be the maximal unramified sub-extension in $E/F$. We assume that $K\subseteq K_n$ and that the uniformizers $\varpi_E$ and $\varpi_F$ satisfy $\varpi_E^e={z}_{E/F}\varpi_F$ as in (\ref{e-th power of prime is also prime}) for some ${z}_{E/F}\in {\boldsymbol{\mu}}_E$. If we define ${z}_{D/E} \in {\boldsymbol{\mu}}_D= {\boldsymbol{\mu}}_{K_n}$ to be a solution of
\begin{equation}\label{norm of root of unity}
  N_{K_n/K}({z}_{D/E})={z}_{E/F},
\end{equation}
then we may take $ \varpi_E=\varpi_D^f{z}_{D/E} $ and this defines an embedding $E$ into $D$ over $F$. Note that from (\ref{ad-varpi-D-on-zeta})
\begin{equation}\label{ad-zeta-on-varpi-D-i}
 {z} \varpi_D^i{z}^{-1}={z}^{1-q^{hi}}\varpi_D^i
\end{equation}
for all $z\in {\boldsymbol{\mu}}_E={\boldsymbol{\mu}}_K$ and all $i\in\mathbb{Z}$.

\subsection{Hereditary orders in central simple algebra}\label{section hereditary order in CSA}

If $G$ is an $F$-inner form of $G^*={\mathrm{GL}}_n$, then $G(F)=A^\times$, where $A$ be a central simple algebra over $F$. By Wedderburn Theorem \cite[(7.4)Theorem]{reiner-max-order}, $A$ is isomorphic to $\mathrm{Mat}_m(D)$, where $D$ is a division algebra of $F$-dimension $d^2$ and $md=n$. Therefore, $G(F)\cong {\mathrm{GL}}_m(D)$. Any field extension of degree $n$ can be embedded into $A$ as a maximal subfield in $A$, and any two such embeddings are conjugate under $G(F)$.

Let $\mathfrak{A}$ be an $\mathfrak{o}_F$-hereditary order in $A$, $\mathfrak{P_A}$ be its Jacobson radical, and $\mathfrak{K_A}$ be the $G(F)$-normalizer of $\mathfrak{A}^\times$. If $\mathfrak{A}$ is principal, in the sense that there exists $\varpi_\mathfrak{A}\in \mathfrak{K_A}$ such that $\varpi_\mathfrak{A}\mathfrak{A}=\mathfrak{A}\varpi_\mathfrak{A}=\mathfrak{P_A}$, then the valuation $v_\mathfrak{A}:\mathfrak{K_A}\rightarrow \mathbb{Z}$ is defined by $x\mathfrak{A}=\mathfrak{A}x=\mathfrak{P_A}^{v_\mathfrak{A}(x)}$ for all $x\in \mathfrak{K_A}$. We also write $U_\mathfrak{A}=U^0_\mathfrak{A}=\mathfrak{A}^\times $, $U^i_\mathfrak{A}=1+\mathfrak{P_A}^i $ for each positive integer $i$, $U^x_\mathfrak{A}=U^{\lceil x\rceil}_\mathfrak{A}$ for all $x\in \mathbb{R}_{\geq0}$, and $$U^{x+}_\mathfrak{A}=\bigcup_{x\in \mathbb{R}_{\geq0},\, y>x}U^{y}_\mathfrak{A}.$$

Suppose that ${E_0}$ is a subfield in $A$ and $\mathfrak{A}$ is ${E_0}$-pure, i.e., ${E_0}^\times\subseteq \mathfrak{K_A}$, then we define the ramification index $e(\mathfrak{A}/\mathfrak{o}_{{E_0}})$ to be the integer $e$ satisfying $v_\mathfrak{A}|_{{E_0}^\times}=ev_{E_0}$. We therefore have
\begin{equation*}\label{small-in-big}
  \mathfrak{p}_{E_0}^i\mathfrak{P_A}^j=\mathfrak{P_A}^{ie+j}
\end{equation*}
and
\begin{equation*}\label{small-intersect-big}
\mathfrak{p}_{E_0}^i\cap \mathfrak{P_A}^j=\mathfrak{p}_{E_0}^{\mathrm{max}\{i, j/e\}}
\end{equation*}
for all $i,j\in \mathbb{Z}$.

In the split case $G=G^*$, i.e., when $A=M$, we denote the hereditary order $\mathfrak{A}$ by $\mathfrak{M}$.

\subsection{Embedding conditions}\label{subsection An embedding condition}

Let $E_0$ and $\mathfrak{A}$ as in the previous section, and write $A_0$ the centralizer of $E_0$ in $A$. In this paper, we assume the conditions (\ref{3-condition-E-pure})-(\ref{3-condition-cano-cont}) below.
\newcounter{saveenum}
\begin{enumerate}[(i)]
 \item \cite[Section 3.2]{BH-JLC} If $E/E_0$ is an unramified extension in $A_0$ such that $[E:F]=n$, then we require that $\mathfrak{A}$ is $E$-pure, i.e., $E^\times\subseteq \mathfrak{K_A}$. \label{3-condition-E-pure}
  \setcounter{saveenum}{\value{enumi}}
\end{enumerate}
Let ${\mathfrak{A}_0}$ be the centralizer of $E_0$ in ${\mathfrak{A}}$, i.e., ${\mathfrak{A}_0}=\mathfrak{A}\cap A_0$, which is a hereditary $\mathfrak{o}_{E_0}$-order in $A_0$, with Jacobson radical $\mathfrak{P}_{\mathfrak{A}_0}=\mathfrak{P}_\mathfrak{A}\cap A_0$.
\begin{enumerate}[(i)]
  \setcounter{enumi}{\value{saveenum}}
 \item \cite[(2.3.1)(2)]{BH-JLC} There exists a fixed integer $e(\mathfrak{A}/\mathfrak{A}_0)\geq 1$ such that
 \begin{equation}\label{cano-cont-intersection}
   \mathfrak{P}^k_\mathfrak{A}\cap A_0=\mathfrak{P}_{\mathfrak{A}_0}^{ k/e(\mathfrak{A}/\mathfrak{A}_0)}\text{ for every }k\in \mathbb{Z}.
 \end{equation}
\label{3-condition-intersection}
  \setcounter{saveenum}{\value{enumi}}
\end{enumerate}
We say that $\mathfrak{A}$ is the \emph{canonical continuation} of $\mathfrak{A}_0$ in $A$. Under (\ref{3-condition-intersection}), we have moreover $\mathfrak{K_A}\cap A_0=\mathfrak{K}_{\mathfrak{A}_0}$.
\begin{enumerate}[(i)]
  \setcounter{enumi}{\value{saveenum}}
   \item \cite[(2.3.2)]{BH-JLC}
   $\mathfrak{A}_0$ is a maximal hereditary $\mathfrak{o}_{E_0}$-order in $A_0$, i.e., $e(\mathfrak{A}_0/\mathfrak{o}_{E_0})=1$.
  \label{3-condition-cano-cont}
\end{enumerate}
  Under (\ref{3-condition-cano-cont}), then $\mathfrak{A}$ is the unique $E_0$-pure hereditary order in $A$ such that $\mathfrak{A}\cap A_0={\mathfrak{A}_0}$. Moreover, $\mathfrak{A}$ is maximal among all $E_0$-pure hereditary orders in $A$, and both $\mathfrak{A}$ and $\mathfrak{A}_0$ are principal (by \cite[the remark after (2.3.2)]{BH-JLC}).

 By \cite[0. Theorem]{Zink}, if the $\mathfrak{o}_D$-period of $\mathfrak{A}$ is denoted by $r=r(\mathfrak{A})=e(\mathfrak{A}/\mathfrak{o}_D)$, i.e. $\varpi_D\mathfrak{A}=\mathfrak{P_A}^r$, then we have an isomorphism
\begin{equation}\label{A mod PA}
  \mathfrak{A}/\mathfrak{P_A}\cong \mathrm{Mat}_s(\mathbf{k}_D)^r,
\end{equation}
where $s=s(\mathfrak{A})=f/e(\mathfrak{A}/\mathfrak{o}_E)$. Once $A$ (and hence $D$) is fixed, the integers $r$ and $s$ depend only on $E$; indeed
$s=s(E/F)=\gcd(f,m)$ and $r=r(E/F)=e/\gcd(d,e)=m/s$.

If $K$ is an intermediate subfield in $E/F$, we write $f_K=f(E/K)$ and $e_K=e(E/K)$. By \cite[1. Prop.]{Zink} the centralizer $A_K=Z_A(K)$ is isomorphic to $\mathrm{Mat}_{m_k}(D_K)$, where $D_K$ is a division algebra over $K$ of degree $d_K^2$, with
$$d_K=\frac{d}{\gcd(d,n(K/F))}\text{ and }m_K=\gcd(m,{n(E/K)}).$$
Let $\mathfrak{A}_K$ be the centralizer of $K$ in $\mathfrak{A}$. It is routine to check that $(E_0K,\mathfrak{A}_K)$ is a canonical continuation of $\mathfrak{A}_{E_0K}$ and is also $E$-pure, i.e., (\ref{3-condition-E-pure})-(\ref{3-condition-cano-cont}) are satisfied when we change our base field from $F$ to $K$. We therefore have an isomorphism
\begin{equation}\label{A mod PA}
  \mathfrak{A}_K/\mathfrak{P}_{\mathfrak{A}_K}\cong \mathrm{Mat}_{s_K}(\mathbf{k}_{D_K})^{r_K},
\end{equation}
where $r_K=e_K/\gcd(d_K,e_K)$ and $s_K=\gcd(f_K,m_K)$.

\subsection{Characters}

Let $\mathrm{Nrd}_{A/F}:A^\times\rightarrow F^\times$ be the reduced norm of $G(F)=A^\times$. By \cite[(33.4)Theorem]{reiner-max-order} and \cite[Satz 2]{Nakayama-Matsushima}, $\mathrm{Nrd}_{A/F}$ is surjective and its kernel is the commutator subgroup of $G(F)$. Therefore, any character of $G(F)$ factors through $\mathrm{Nrd}_{A/F}$.

We define the $\mathfrak{A}$-level of a character $\xi$ of $A^\times$ as the smallest integer $a\geq-1$ such that $\xi|_{U_\mathfrak{A}^{a+1}}$ is trivial. This is analogous to the $F$-level of a character of $F^\times$ defined in Section \ref{section Notations}.

\begin{prop}\label{char composing norm}
 If the $F$-level of a character $\xi$ of $F^\times$ is $r$, then the $\mathfrak{A}$-level of the character $\xi\circ \mathrm{Nrd}_{A/F}$ is $r\cdot e(\mathfrak{A}/\mathfrak{o}_F)$.
\end{prop}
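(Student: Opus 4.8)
The plan is to reduce the statement to the behaviour of the reduced norm on the filtration of $\mathfrak{A}$, and then to the analogous behaviour of $N_{E/F}$ on the filtration of $\mathfrak{o}_E$ for a conveniently chosen maximal subfield $E$. First I would fix a field extension $E/F$ of degree $n$ with $E^\times \subseteq \mathfrak{K_A}$ and such that $\mathfrak{A}$ is $E$-pure with $e(\mathfrak{A}/\mathfrak{o}_E) = e(\mathfrak{A}/\mathfrak{o}_F)/e(E/F)$ (for instance, take $E/F$ unramified of degree $n$ so that $e(E/F)=1$ and $e(\mathfrak{A}/\mathfrak{o}_E)=e(\mathfrak{A}/\mathfrak{o}_F)$; such an $E$ embeds in $A$ as a maximal subfield by the discussion in Section~\ref{section hereditary order in CSA}). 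Write $e_{\mathfrak A} = e(\mathfrak{A}/\mathfrak{o}_F)$ for brevity. Since every character of $A^\times$ factors through $\mathrm{Nrd}_{A/F}$ and, in particular, $\xi\circ\mathrm{Nrd}_{A/F}$ restricted to $U^i_{\mathfrak A}$ is controlled by $\mathrm{Nrd}_{A/F}(U^i_{\mathfrak A})\subseteq F^\times$, the key point is to identify the image $\mathrm{Nrd}_{A/F}(U^i_{\mathfrak A})$ inside the filtration $\{U^j_F\}$ of $F^\times$.

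The main computation is therefore: $\mathrm{Nrd}_{A/F}(U^i_{\mathfrak A}) = U^{\lceil i/e_{\mathfrak A}\rceil}_F$ for every $i \geq 1$, and likewise $\mathrm{Nrd}_{A/F}(U_{\mathfrak A}) = U_F$. For the inclusion $\mathrm{Nrd}_{A/F}(U^i_{\mathfrak A})\subseteq U^{\lceil i/e_{\mathfrak A}\rceil}_F$ I would argue that for $x \in 1 + \mathfrak{P}_{\mathfrak A}^i$ one has $\mathrm{Nrd}_{A/F}(x)\in 1 + (\mathfrak{P}_{\mathfrak A}^i\cap F) = 1 + \mathfrak{p}_F^{\lceil i/e_{\mathfrak A}\rceil}$, using the intersection formula $\mathfrak{p}_F^a \cap \mathfrak{P}_{\mathfrak A}^j = \mathfrak{p}_F^{\max\{a,\lceil j/e_{\mathfrak A}\rceil\}}$ from Section~\ref{section hereditary order in CSA} (with $E_0 = F$) together with the fact that $\mathrm{Nrd}_{A/F}(1 + \mathfrak{P}_{\mathfrak A}^i) \subseteq 1 + \mathfrak{P}_{\mathfrak A}^i$, which follows from expanding the reduced norm (equivalently, the determinant over a splitting field) as $1$ plus terms in $\mathfrak{P}_{\mathfrak A}^i$. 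For the reverse inclusion, I would restrict $\mathrm{Nrd}_{A/F}$ to $E^\times$, where it becomes $N_{E/F}$: since $\mathfrak{A}$ is $E$-pure with $v_{\mathfrak A}|_{E^\times} = e(\mathfrak{A}/\mathfrak{o}_E)\,v_E$, we have $U^k_E = (1+\mathfrak{p}_E^k) \subseteq U^{k\cdot e(\mathfrak{A}/\mathfrak{o}_E)}_{\mathfrak A}$, and the classical surjectivity $N_{E/F}(U^k_E) = U^{?}_F$ for the appropriate exponent — in the unramified case $N_{E/F}(U^k_E) = U^k_F$ — gives enough elements. Matching the exponents (with $e(E/F)=1$, $e(\mathfrak{A}/\mathfrak{o}_E) = e_{\mathfrak A}$, so $U^k_E \subseteq U^{k e_{\mathfrak A}}_{\mathfrak A}$ maps onto $U^k_F$) yields $U^k_F \subseteq \mathrm{Nrd}_{A/F}(U^{k e_{\mathfrak A}}_{\mathfrak A})$, and a ceiling-function bookkeeping argument upgrades this to all $i$, not just multiples of $e_{\mathfrak A}$.

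Granting the filtration identity $\mathrm{Nrd}_{A/F}(U^i_{\mathfrak A}) = U^{\lceil i/e_{\mathfrak A}\rceil}_F$, the proposition follows immediately: $(\xi\circ\mathrm{Nrd}_{A/F})|_{U^{i}_{\mathfrak A}}$ is trivial if and only if $\xi|_{U^{\lceil i/e_{\mathfrak A}\rceil}_F}$ is trivial, i.e.\ if and only if $\lceil i/e_{\mathfrak A}\rceil \geq r+1$, i.e.\ if and only if $i \geq r\cdot e_{\mathfrak A} + 1$; hence the smallest such $i - 1$ — the $\mathfrak{A}$-level of $\xi\circ\mathrm{Nrd}_{A/F}$ — equals $r\cdot e_{\mathfrak A} = r\cdot e(\mathfrak{A}/\mathfrak{o}_F)$, and the case $r=-1$ (where $\xi$ is unramified, so $\xi\circ\mathrm{Nrd}_{A/F}$ is trivial on $U_{\mathfrak A}$) is handled by the same computation with $\mathrm{Nrd}_{A/F}(U_{\mathfrak A}) = U_F$. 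The main obstacle I anticipate is the reverse inclusion $U^{\lceil i/e_{\mathfrak A}\rceil}_F \subseteq \mathrm{Nrd}_{A/F}(U^i_{\mathfrak A})$ for $i$ \emph{not} divisible by $e_{\mathfrak A}$: the restriction-to-$E^\times$ trick naturally produces surjectivity only at multiples of $e(\mathfrak{A}/\mathfrak{o}_E)$, so one must either choose $E$ more cleverly (e.g.\ allowing ramification so that $e(\mathfrak{A}/\mathfrak{o}_E)=1$ while keeping $[E:F]=n$, which may not always be possible) or argue directly with the congruence $1 + \mathfrak{p}_F^a \subseteq \mathrm{Nrd}_{A/F}(1+\mathfrak{P}_{\mathfrak A}^{a e_{\mathfrak A}})$ combined with $1 + \mathfrak{p}_F^a = 1 + \mathfrak{p}_F^a$ being stable, using that $\lceil i/e_{\mathfrak A}\rceil = a$ already for all $i$ in the range $(a-1)e_{\mathfrak A} < i \le a e_{\mathfrak A}$ and $U^i_{\mathfrak A} \supseteq U^{a e_{\mathfrak A}}_{\mathfrak A}$.
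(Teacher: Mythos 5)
Your opening reduction is exactly the paper's: the proposition follows once one knows $\mathrm{Nrd}_{A/F}(U_{\mathfrak{A}}^{r\cdot e(\mathfrak{A}/\mathfrak{o}_F)})=U_F^r$, and the paper disposes of this in one line by citing \cite[(2.8.3)]{BF-non-ab-Gauss-sum}. The gap is in your attempt to actually prove the surjectivity half of that identity. Your ``for instance'' choice of an unramified degree-$n$ maximal subfield $E$ with $E^\times\subseteq\mathfrak{K}_{\mathfrak{A}}$ is in general impossible: an $E$-pure hereditary order must come from a lattice chain of $\mathfrak{o}_E$-lattices, and for $E/F$ unramified of degree $n$ (so $E$ is its own centralizer and the lattices are fractional $\mathfrak{o}_E$-ideals) the chain has period one, i.e.\ $\mathfrak{A}$ would have to be maximal. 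But in this paper $\mathfrak{A}$ is the canonical continuation attached to $E_0$, with $e(\mathfrak{A}/\mathfrak{o}_E)=f/s$ and $e(\mathfrak{A}/\mathfrak{o}_F)=e(E/F)\,e(\mathfrak{A}/\mathfrak{o}_E)$ typically $>1$ (and the proposition is applied to the intermediate orders $\mathfrak{A}_k$ as well), so your construction is unavailable precisely when the statement has content. Your fallback --- a maximal subfield with $e(\mathfrak{A}/\mathfrak{o}_E)=1$, hence $e(E/F)=e(\mathfrak{A}/\mathfrak{o}_F)$ --- does not repair this: that ramification degree may be divisible by $p$, and for wildly ramified $E/F$ the norm $N_{E/F}$ is \emph{not} surjective onto the unit filtration at the relevant levels (conductor/Herbrand obstructions), so restricting $\mathrm{Nrd}_{A/F}$ to a single maximal subfield cannot yield $U_F^a\subseteq \mathrm{Nrd}_{A/F}(U_{\mathfrak{A}}^{a\,e(\mathfrak{A}/\mathfrak{o}_F)})$ in general. (Your ceiling bookkeeping reducing general $i$ to multiples of $e(\mathfrak{A}/\mathfrak{o}_F)$ is fine; it is the surjectivity at those multiples that is unproven.)

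Two further points. The containment $\mathrm{Nrd}_{A/F}(1+\mathfrak{P}_{\mathfrak{A}}^i)\subseteq 1+\mathfrak{p}_F^{\lceil i/e(\mathfrak{A}/\mathfrak{o}_F)\rceil}$ is true, but ``expanding the reduced norm as $1$ plus terms in $\mathfrak{P}_{\mathfrak{A}}^i$'' is not yet a proof: the expansion produces the coefficients of the reduced characteristic polynomial of $y$, and one must show these lie in $\mathfrak{p}_F^{\lceil ki/e\rceil}$, which requires an argument with the explicit block shape of $\mathfrak{P}_{\mathfrak{A}}^i$ (after an unramified base change splitting $D$). And a proof of surjectivity that works uniformly, without choosing a subfield, is the standard successive-approximation argument: use $\mathrm{Nrd}_{A/F}(1+y)\equiv 1+\mathrm{trd}_{A/F}(y)$ modulo higher filtration steps together with the surjectivity of the reduced trace, $\mathrm{trd}_{A/F}(\mathfrak{P}_{\mathfrak{A}}^i)=\mathfrak{p}_F^{\lceil i/e(\mathfrak{A}/\mathfrak{o}_F)\rceil}$, valid for hereditary orders; this is essentially what the cited result \cite[(2.8.3)]{BF-non-ab-Gauss-sum} encapsulates, and absent such an argument your proposal does not close.
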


\begin{proof}
  It suffices to show that
  $$\mathrm{Nrd}_{A/F}({U_\mathfrak{A}^{r\cdot e(\mathfrak{A}/\mathfrak{o}_F)}})=U_F^r
  \text{ for all }r\in \mathbb{R}_{\geq 0}.$$
  This follows from \cite[(2.8.3)]{BF-non-ab-Gauss-sum}.
\end{proof}

\section{The essentially tame Jacquet-Langlands correspondence}

\subsection{Admissible characters}\label{section Admissible characters}

We recall the definition of admissible characters in \cite{Howe}, \cite{Moy}. Given a tamely ramified finite extension $E/F$, let $\xi$ be a character of $E^\times$. We call a pair $(E/F,\xi)$ \emph{admissible} \index{admissible character} over $F$ if $\xi$ is an admissible character over $F$, i.e., for every intermediate subfield $K$ between $E/F$,
\begin{enumerate}[(i)]
\item if $\xi$ factors through the norm $ N_{E/K}$, then $E = K$;
\item if $\xi|_{U^1_E}$ factors through $ N_{E/K}$ then $E/K$ is unramified.
\end{enumerate}
Two admissible pairs $(E/F,\xi)$ and $(E'/F,\xi')$ are called $F$-equivalent if there is $g\in \Gamma_F$ such that ${}^gE=E'$ and ${}^g\xi=\xi'$. Let $P_n(F)$ be the set of $F$-equivalence classes of the admissible pair $(E/F,\xi)$, with each class in $P_n(F)$ still denoted by $(E/F,\xi)$ for convenience.

Every admissible character $\xi$ admits a {factorization} (see \cite[Corollary of Lemma 11]{Howe} or \cite[Lemma 2.2.4]{Moy})
\begin{equation}\label{Howe factorization}
\xi=  \xi_{-1}(\xi_0 \circ N_{E/E_0})\cdots(\xi_t \circ N_{E/E_t})(\xi_{t+1}\circ N_{E/F}),
\end{equation}
where, in the notations above, the decreasing sequence of fields
\begin{equation*}\label{decreasing subfield}
E=E_{-1}\supseteq E_0 \supsetneq E_1 \supsetneq \cdots \supsetneq E_t \supsetneq E_{t+1}=F.
 \end{equation*}
 and the increasing $E$-levels
 $a_{-1}=0<a_0<a_1<\cdots<a_t\leq a_{t+1}$
 of the characters $\xi_k \circ N_{E/E_k}$, $k=0,\dots,t+1$, are uniquely determined. We call the $E$-levels $a_k$ the \emph{jumps} of $\xi$ and call the collection $\{E_k,a_k|k=0,\dots,t\}$ the \emph{jump data} of $\xi$. By convention, when $E_0=E$, we replace $(\xi_0 \circ N_{E/E_0})\xi_{-1}$ by $\xi_0$ and assume that $\xi_{-1}$ is trivial; otherwise, $\xi_{-1}$ is tamely ramified and $E/E_0$ is unramified \cite[Defnition 2.2.3]{Moy}. There are some conditions imposed on the jump data of the character by its admissibility, but we do not need them in this paper. We only refer the interested reader to \cite[Section 8.2]{BH-ET3} (and when $E/F$ is totally ramified, see also \cite[Section 1]{BH-ET2}).

We fix a (non-canonical) choice of $\xi_{-1}$ in the factorization (\ref{Howe factorization}) as follows. We fix a choice of the wild component $\xi_w$ of $\xi$ to be the product
$$(\xi_0 \circ N_{E/E_0})\cdots(\xi_t \circ N_{E/E_t})(\xi_{t+1}\circ N_{E/F})$$
which satisfies
\begin{equation}\label{xi-1 conditions}
  \xi_w(\varpi_F)=1\text{ and }\xi_w\text{ has a }p\text{-power order}
\end{equation}
(see \cite[Lemma 1 of Section 4.3]{BH-JLC}), and define the tame component of $\xi$ to be $\xi_{-1} = \xi \xi_w^{-1}$.
We write
\begin{equation}\label{factorization of 1-pair}
\Xi=\Xi(\xi) =  \xi_0  (\xi_1 \circ N_{E_0/E_1})  \cdots(\xi_{t+1} \circ N_{E_0/F}),
\end{equation}
such that $\xi_w=\Xi \circ N_{E/E_0}$.

Suppose that  $E_0/F$ is a tamely ramified extension of degree dividing $n$ and $E/E_0$ is unramified. Let $\Xi$ be a character of $U_{E_0}^1$. We call $(E_0/F,\Xi)$ an \emph{admissible 1-pair} over $F$ if $\Xi$ does not factor through any norm $N_{E_0/K}$ with $F\subseteq K\subsetneq E_0$. We denote by $P^1_n(F)$ the set of $F$-equivalence classes of these pairs. Therefore, the map
$$P_n(F)\rightarrow P^1_n(F),\,(E/F,\xi)\mapsto (E_0/F,\Xi(\xi)|_{U_{E_0}^1}),$$
is well-defined and surjective. Notice that we can define the jump-data of a 1-pair, such that the jump-data of an admissible pair is the same as that of its associated 1-pair.

\subsection{The correspondences}\label{section The correspondences}

Let $G^*$ be ${\mathrm{GL}}_n$ defined over $F$, and $G$ be an inner form of $G^*$ whose $F$-point is isomorphic to ${\mathrm{GL}}_m(D)$ for some central $F$-division algebra $D$ of dimension $d^2$ and $n=md$. Let
$\mathcal{A}^2_n(F)$ (resp. $\mathcal{A}^2_m(D)$) be the collection of equivalence classes of irreducible smooth representations of $G^*(F)$ (resp. $G(F)$) which are essentially square-integrable modulo center. The Jacquet-Langlands correspondence is a canonical bijection
\begin{equation}\label{JLC}
  JL:\mathcal{A}^2_n(F)\rightarrow \mathcal{A}^2_m(D)
\end{equation}
between the two collections determined by a character relation between  $\pi\in \mathcal{A}^2_n(F)$ and its image $JL(\pi)\in \mathcal{A}^2_m(D)$: for every pair of semi-simple elliptic regular elements $(g,g^*)$, where $g\in G(F)$ and $g^*\in G^*(F)$, with the same reduced characteristic polynomial, we have \cite[Section 1.4]{BH-JLC}
\begin{equation}\label{JL-character-relation}
  (-1)^{n-m}\Theta_{\pi}(g^*)=\Theta_{JL(\pi)}(g),
\end{equation}
where $\Theta_{\pi}$ (resp. $\Theta_{JL(\pi)}$) is the character of $\pi$ (resp. ${JL(\pi)}$).

For each representation $\pi\in \mathcal{A}^2_m(D)$, let
\begin{enumerate}[(i)]
  \item   $f(\pi)$ be the number of unramified characters $\chi$ of $F^\times$ that $\chi\otimes\pi\cong\pi$ (here $\chi$ is regarded as a representation of $G(F)$ by composing with the reduced norm map $\mathrm{Nrd}:G(F)\rightarrow F^\times$), and
      \item $\delta(\pi)$ be the parametric degree of $\pi$ (we do not require its full definition, so we only refer to \cite[Section 2.7]{BH-JLC} for details).
\end{enumerate}
It is known that $\delta(\pi)$ is a positive integer and is a multiple of $f(\pi)$. Moreover, $\pi$ is supercuspidal if $\delta(\pi)=n$, while the converse is only true in the split case (when $G=G^*$).

The correspondence (\ref{JLC}) restricts to a bijection
\begin{equation*}
  JL:\mathcal{A}^0_n(F)\rightarrow \{\pi\in \mathcal{A}^2_m(D)| \delta(\pi)=n\}.
\end{equation*}
We call $\pi$ \emph{essentially tame} if $p$ does not divide $\delta(\pi)/f(\pi)$. Let $\mathcal{A}^\mathrm{et}_m(D)$ be the set of isomorphism classes of irreducible representations in $\mathcal{A}^2_m(D)$ which are essentially tame and satisfy $\delta(\pi)=n$.  Therefore $\mathcal{A}^\mathrm{et}_n(F)$ is the same collection defined in \cite{BH-ET1}. Since the Jacquet-Langlands correspondence in (\ref{JLC}) preserves the invariants $\delta(\pi)$ and $f(\pi)$, we have the following theorem \cite[2.8. Corollary 2]{BH-JLC}.

 \begin{thm}[Essentially tame Jacquet-Langlands correspondence]\label{ETJLC}
The restriction of the Jacquet-Langlands correspondence induces a bijection
$$JL:\mathcal{A}^\mathrm{et}_n(F)\rightarrow\mathcal{A}^\mathrm{et}_m(D).$$
 This bijection preserves the central characters on both sides.
 \qed\end{thm}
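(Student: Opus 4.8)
The plan is to derive both assertions of the theorem directly from the properties of the Jacquet--Langlands bijection $JL:\mathcal{A}^2_n(F)\to\mathcal{A}^2_m(D)$ of (\ref{JLC}) already recalled, namely that it is a bijection, that it restricts to a bijection $\mathcal{A}^0_n(F)\to\{\pi\in\mathcal{A}^2_m(D)|\,\delta(\pi)=n\}$, that it preserves the invariants $\delta$ and $f$, and that it is characterised by the character identity (\ref{JL-character-relation}).

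For the bijectivity statement, I would first observe that the conditions cutting out $\mathcal{A}^\mathrm{et}_n(F)$ inside $\mathcal{A}^2_n(F)$ and $\mathcal{A}^\mathrm{et}_m(D)$ inside $\mathcal{A}^2_m(D)$ are each phrased solely in terms of the two numbers $\delta(\pi)$ and $f(\pi)$: membership requires $\delta(\pi)=n$ together with $p\nmid\delta(\pi)/f(\pi)$. Since $JL$ is a bijection and preserves both $\delta$ and $f$, it carries the set of $\pi\in\mathcal{A}^2_n(F)$ satisfying these two conditions bijectively onto the set of $\pi'\in\mathcal{A}^2_m(D)$ satisfying them; that is, $JL$ restricts to a bijection $\mathcal{A}^\mathrm{et}_n(F)\to\mathcal{A}^\mathrm{et}_m(D)$. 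Equivalently, one may start from the already-recorded restriction $JL:\mathcal{A}^0_n(F)\to\{\pi\in\mathcal{A}^2_m(D)|\,\delta(\pi)=n\}$ and further cut down using preservation of $f$.

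For the central characters, identify the centers of $G^*(F)=\mathrm{GL}_n(F)$ and $G(F)=\mathrm{GL}_m(D)$ with $F^\times$ via scalar matrices, and write $\omega_\pi$ and $\omega_{JL(\pi)}$ for the central characters of $\pi$ and $JL(\pi)$. Fix a matching pair $(g,g^*)$ of semisimple elliptic regular elements with the same reduced characteristic polynomial, which exists because any degree-$n$ extension of $F$ embeds into $A$ as a maximal subfield, and which can be chosen so that $\Theta_\pi(g^*)\neq 0$ since the character of an essentially square-integrable representation does not vanish identically on the regular elliptic set. For any $z\in F^\times$ the pair $(zg,zg^*)$ is again a matching pair of semisimple elliptic regular elements, so (\ref{JL-character-relation}) applied to $(zg,zg^*)$, combined with (\ref{JL-character-relation}) for $(g,g^*)$ and the identities $\Theta_\pi(zg^*)=\omega_\pi(z)\Theta_\pi(g^*)$ and $\Theta_{JL(\pi)}(zg)=\omega_{JL(\pi)}(z)\Theta_{JL(\pi)}(g)$, yields $\omega_{JL(\pi)}(z)=\omega_\pi(z)$ after dividing by the nonzero quantity $\Theta_{JL(\pi)}(g)=(-1)^{n-m}\Theta_\pi(g^*)$.

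I do not anticipate a genuine obstacle: the theorem is essentially a bookkeeping consequence of the deep inputs that $JL$ exists, is bijective on essentially square-integrable representations, and preserves $\delta$ and $f$. The only point demanding a little care is the central-character argument, where one must secure the existence of a matching regular elliptic pair at which the relevant character is nonzero, and note that multiplication by a central scalar preserves regular ellipticity and shifts the reduced characteristic polynomial compatibly on the two sides; both of these are straightforward.
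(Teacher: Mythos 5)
Your argument is correct, and for the part the paper actually argues it is the same argument: the paper justifies the bijection in one line by noting that $JL$ preserves the invariants $\delta(\pi)$ and $f(\pi)$, exactly your bookkeeping observation that membership in $\mathcal{A}^\mathrm{et}$ is a condition on $\delta$ and $f$ alone, and then simply quotes the full statement (including the central-character claim) from Bushnell--Henniart. Where you go beyond the paper is the central-character assertion: the paper delegates this to the cited corollary, whereas you prove it directly from the character relation by scaling a matching elliptic regular pair $(g,g^*)$ by a central element $z$ and comparing $\Theta_\pi(zg^*)=\omega_\pi(z)\Theta_\pi(g^*)$ with $\Theta_{JL(\pi)}(zg)=\omega_{JL(\pi)}(z)\Theta_{JL(\pi)}(g)$. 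That argument is sound: matching pairs exist because an elliptic regular $g^*$ generates a degree-$n$ field which embeds in $\mathrm{Mat}_m(D)$ as a maximal subfield, scaling by $z\in F^\times$ preserves regular ellipticity and matches reduced characteristic polynomials on both sides, and the only nontrivial input is the non-vanishing of the character of an essentially square-integrable representation somewhere on the regular elliptic set, which you correctly flag and which is a standard consequence of the elliptic orthogonality relations. So your route buys a self-contained proof of the central-character statement at the cost of invoking that non-vanishing fact, while the paper's route is pure citation.
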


Bushnell and Henniart described this bijection explicitly in a way parallel to \cite{BH-ET1}, \cite{BH-ET2}, \cite{BH-ET3}. We recall the results briefly as follows. On the one hand, we have the bijection
\begin{equation}\label{bijection of P and FPi}
  {}_F\Pi:P_n(F)\rightarrow\mathcal{A}^\mathrm{et}_n(F),\,(E/F,\xi)\mapsto{}_F\Pi_\xi,
\end{equation}
generalizing the construction of Howe \cite{Howe}. On the other hand, there is an analogous bijection
\begin{equation}\label{bijection of P and DPi}
  {}_D\Pi:P_n(F)\rightarrow\mathcal{A}^\mathrm{et}_m(D),\,(E/F,\xi)\mapsto{}_D\Pi_\xi,
\end{equation}
using the constructions in \cite{Secherre1}, \cite{Secherre2}, \cite{Secherre3}, \cite{Secherre4}. In fact, what is constructed in \cite{BH-JLC} is the inverse of (\ref{bijection of P and DPi}), using the method called `attached-pairs', since the construction parallel to (\ref{bijection of P and FPi}) exhibits some `novel technical difficulties' as mentioned in \cite[Introduction 4.]{BH-JLC}. In the split case, the attached-pair method yields the inverse of (\ref{bijection of P and FPi}) (see 4.4 of \cite{BH-JLC}).

The composition of the bijection in (\ref{bijection of P and FPi}), the correspondence in Theorem \ref{ETJLC}, and the inverse of (\ref{bijection of P and DPi}),
\begin{equation}\label{composition is not identity}
{}_D\nu: P_n(F) \xrightarrow{{}_F\Pi} \mathcal{A}^\mathrm{et}_n(F) \xrightarrow{JL} \mathcal{A}^\mathrm{et}_m(D) \xrightarrow{{}_D\Pi^{-1}} P_n(F),
\end{equation}
determines a tamely ramified quadratic character ${}_D\nu_\xi$ of $E^\times$ for each admissible character $\xi$ of $E^\times$, depending only on the wild part of $\xi$, such that $(E/F,{}_D\nu_\xi \cdot \xi)$ is also admissible and
\begin{equation}\label{rectifier}
{}_D\nu(E/F, \xi) = (E/F, {}_D\nu_\xi \cdot \xi).
\end{equation}
We call this character ${}_D\nu_\xi$ the \emph{rectifier} of $\xi$ (for the Jacquet-Langlands correspondence). Using the First and Second Comparison Theorems of \cite{BH-JLC}, we can compute the values of ${}_D\nu_\xi$. To express these values, we need the knowledge of certain invariants of finite symplectic modules, which will be described in Section \ref{section Values of rectifiers}. Finally, with the expression of ${}_D\nu_\xi$, we see that we can describe the correspondence in Theorem \ref{ETJLC} explicitly, using (\ref{bijection of P and FPi}), (\ref{bijection of P and DPi}), and (\ref{composition is not identity}).

\subsection{Some subgroups}\label{section subgroups}

We recall certain subgroups of $G(F)$. Suppose that the jump data $\{ E_k, a_k| k=0,\dots,t\}$ are defined by the factorization (\ref{Howe factorization}) of an admissible pair $(E/F,\xi)$, or equivalently, of its associated 1-pair $(E_0/F,\Xi)$. We require that $(E_0,\mathfrak{A})$ satisfy the conditions in Section \ref{subsection An embedding condition}. We write $A_k$ the centralizer of $E_k$ in $A$ and $\mathfrak{A}_k = A_k\cap \mathfrak{A}$. We can then define $\mathfrak{P}_{\mathfrak{A}_k}$, $U_{\mathfrak{A}_k}$, $ U_{\mathfrak{A}_k}^x$, and $ U_{\mathfrak{A}_k}^{x+}$ for $x\in \mathbb{R}_{\geq0}$ analogously as in Section \ref{section hereditary order in CSA}. Following \cite[Definition 4.1]{Grab}, we construct the pro-$p$ subgroups
\begin{equation}\label{group H(xi) J(xi)}
\begin{split}
&H^1(\Xi,\mathfrak{A})= U_{\mathfrak{A}_{0}}^1 U_{\mathfrak{A}_{1}}^{(a_0e(\mathfrak{A}_1/\mathfrak{o}_E)/2)+} \cdots U_{\mathfrak{A}_t}^{(a_{t-1}e(\mathfrak{A}_t/\mathfrak{o}_E)/2)+} U_{\mathfrak{A}}^{(a_te(\mathfrak{A}/\mathfrak{o}_E)/2)+}\text{ and }
\\
&J^1(\Xi,\mathfrak{A})=U_{\mathfrak{A}_{0}}^1 U_{\mathfrak{A}_{1}}^{a_0e(\mathfrak{A}_1/\mathfrak{o}_E)/2} \cdots U_{\mathfrak{A}_t}^{a_{t-1}e(\mathfrak{A}_t/\mathfrak{o}_E)/2} U_{\mathfrak{A}}^{a_te(\mathfrak{A}/\mathfrak{o}_E)/2}.
\end{split}
\end{equation}
We also construct the subgroups
\begin{equation*}\label{group bold-J(xi}
  \begin{split}
&J(\Xi,\mathfrak{A})=U_{\mathfrak{A}_{0}}   U_{\mathfrak{A}_{1}}^{a_0e(\mathfrak{A}_1/\mathfrak{o}_E)/2} \cdots U_{\mathfrak{A}_t}^{a_{t-1}e(\mathfrak{A}_t/\mathfrak{o}_E)/2} U_{\mathfrak{A}}^{a_te(\mathfrak{A}/\mathfrak{o}_E)/2}
\text{ and}
\\
&\mathbf{J}(\Xi,\mathfrak{A})=E^\times J(\Xi,\mathfrak{A})=E_0^\times J(\Xi,\mathfrak{A}).
  \end{split}
\end{equation*}
We abbreviate these groups by $H^1,\, J^1,\, J$ and $\mathbf{J}$ if the admissible character $\xi$ is fixed. Notice that $H^1,\, J^1,J$ are compact subgroups of $G(F)$ and  $\mathbf{J}$ is a compact-mod-center subgroup of $G(F)$.

In \cite[Section 3.1]{Secherre1}, these subgroups are defined based on a simple stratum $[\mathfrak{A},-v_\mathfrak{A}(\beta),0,\beta]$, where $\beta$ is a suitable element in $E_0$, depending on $\Xi$ and such that
$$-v_\mathfrak{A}(\beta)=\text{ the }E\text{-level of }\Xi\circ N_{E/E_0}.$$
The group $H^1(\Xi,\mathfrak{A})$ is denoted by $H^1(\beta,\mathfrak{A})$ in \emph{loc. cit.} (and similarly for the other subgroups). This construction is an obvious generalization of \cite[Section 3.1]{BK} (see also \cite[Section 2.5]{BH-JLC} and the Comment therewithin).

\subsection{Simple characters}\label{section Simple Characters}

Given an admissible 1-pair $(E_0/F,\Xi)$ and a finite unramified extension $E/E_0$, we define $H^1(\Xi,\mathfrak{A})$ as in (\ref{group H(xi) J(xi)}). Using the idea of \cite[Section 3.2]{Moy} (see also \cite[Definitions 3.22, 3.45, Proposition 3.47]{Secherre1}), we attach to $(E_0/F,\Xi)$ a simple character $(H^1(\Xi,\mathfrak{A}),\theta_{\Xi,E})$ as follows. Suppose that $\Xi$ admits a factorization of the form (\ref{factorization of 1-pair}), with each $\xi_k\circ N_{E_0/E_k}$ a character of $U_{E_0}^1$. For each $\xi_k$ there is $c_k\in E_k\cap \mathfrak{p}_E^{-a_k}$ such that
\begin{equation}\label{multiplicative char and additive char c_k}
\xi_k\circ N_{E/E_k}(1+x)= \psi_F(\text{tr}_{E/F}(c_kx))\text{ for all }x\in \mathfrak{p}_E^{(a_k/2)+}.
\end{equation}
The element $c_k$ can be chosen mod $\mathfrak{p}_E^{-a_k/2}$. We denote the character on the right side of (\ref{multiplicative char and additive char c_k}) by $\psi_{c_k}$.

We define a character $\theta_{\Xi,E}$ of the subgroup $H^1(\Xi,\mathfrak{A})$ in (\ref{group H(xi) J(xi)}) by the following inductive procedure. We first take a character $\theta_n$ on the subgroup $U_{\mathfrak{A}_t}^{(a_{t-1}e(\mathfrak{A}_t/\mathfrak{o}_E)/2)+} U_{\mathfrak{A}}^{(a_te(\mathfrak{A}/\mathfrak{o}_E)/2)+}$ by
\begin{equation*}\label{simple char first step}
\begin{split}
&\xi_t\circ\mathrm{Nrd}_{A_t/E_t}\text{ on }U_{\mathfrak{A}_t}^{(a_{t-1}e(\mathfrak{A}_t/\mathfrak{o}_E)/2)+} \text{ and }
\\
&\psi_{c_t}\circ \mathrm{trd}_{\mathfrak{A}_t/E_t} \text{ on }U_{\mathfrak{A}}^{(a_te(\mathfrak{A}/\mathfrak{o}_E)/2)+}.
\end{split}
\end{equation*}
On the intersection $U_{\mathfrak{A}_t}^{(a_te(\mathfrak{A}_t/\mathfrak{o}_E)/2)+}$, the two characters agree (by Prop. \ref{char composing norm} and (\ref{multiplicative char and additive char c_k})). Hence $\theta_t$ is defined. Inductively, suppose $\theta_{k+1}$ is defined, we construct $\theta_k$ on the subgroup $$U_{\mathfrak{A}_k}^{(a_{k-1}e(\mathfrak{A}_{k}/\mathfrak{o}_E)/2)+} U_{\mathfrak{A}_{k+1}}^{(a_{k}e(\mathfrak{A}_{k+1}/\mathfrak{o}_E)/2)+}  \cdots U_{\mathfrak{A}}^{(a_te(\mathfrak{A}/\mathfrak{o}_E)/2)+}$$
by
\begin{equation}\label{simple char inductive step}
\begin{split}
&(\xi_k\circ\mathrm{Nrd}_{A_k/E_k})\cdots (\xi_{t+1}\circ\mathrm{Nrd}_{A/F})\text{ on }U_{\mathfrak{A}_k}^{(a_{k-1}e(\mathfrak{A}_{k}/\mathfrak{o}_E)/2)+}
\text{ and }
\\
&\theta_{k+1}\psi_{c_k}\circ \mathrm{trd}_{\mathfrak{A}_k/E_k}\text{ on } U_{\mathfrak{A}_{k+1}}^{(a_{k}e(\mathfrak{A}_{k+1}/\mathfrak{o}_E)/2)+}  \cdots U_{\mathfrak{A}}^{(a_te(\mathfrak{A}/\mathfrak{o}_E)/2)+}.
 \end{split}
\end{equation}
On the intersection $U_{\mathfrak{A}_k}^{(a_{k}e(\mathfrak{A}_k/\mathfrak{o}_E)/2)+}  $, we have $$\theta_{k+1}=(\xi_{k+1}\circ\mathrm{Nrd}_{A_{k+1}/E_{k+1}})\cdots (\xi_{t+1}\circ\mathrm{Nrd}_{A/F})$$
 and
 $$\psi_{c_k}\circ \mathrm{trd}_{\mathfrak{A}_k/E_k}=\xi_{k}\circ\mathrm{Nrd}_{A_{k}/E_{k}}.$$
  Hence the two characters in (\ref{simple char inductive step}) agree on the intersection and $\theta_k$ is defined. Finally, we define $\theta_{\Xi,E}=\theta_0$ on $H^1(\Xi,\mathfrak{A})$. By construction, this character $\theta_{\Xi,E}$ is normalized by $\mathfrak{K}_{\mathfrak{A}_0}$, hence it is a simple character by \cite[5.3 Definition.(i)]{Grab}.

To get back our character $\Xi$ from $\theta_{\Xi,E}$, just notice that $\theta_{\Xi,E}|_{U_E^1}$ factors through $N_{E/E_0}$, hence there is a unique character $\Xi$ of $U_{E_0}^1$ such that $\Xi\circ N_{E/E_0}=\theta_{\Xi,E}|_{U_E^1}$.

\subsection{Local constructions of attached pairs}\label{section local construction, attached pair}

We briefly summarize the construction in \cite[Sec. 4]{BH-JLC} of the bijection $  {}_D\Pi$ in (\ref{bijection of P and DPi}). We will distinguish between the `level-zero' case and the `positive-level' case (we refer to \cite[Sections 2.4 and 2.6]{BH-JLC} the definition of these cases ). As we remarked in the Introduction, it is indeed the inverse of $  {}_D\Pi$ that we are going to describe.

 In the level-zero case, the construction is similar to the one in the split case. Each level zero $\pi\in \mathcal{A}^\mathrm{et}_m(D)$ contains a representation $({\mathrm{GL}}_m(\mathfrak{o}_D),\lambda)$, called a maximal simple type of level zero, inflated from an irreducible cuspidal representation $({\mathrm{GL}}_m(\mathbf{k}_D),\bar{\lambda})$. This representation $\bar{\lambda}$ corresponds, via Green's parametrization \cite{Green}, to a $\mathbf{k}_E/\mathbf{k}_F$-regular character $\bar{\xi}$ of $\mathbf{k}_E^\times$, where $E/F$ is the unramified extension of degree $n$. We define the character $\xi$ of $E^\times$ such that $\xi|_{\mathfrak{o}_E^\times}$ is the inflation of $\bar{\xi}$ and $\xi|_{F^\times}$ is the central character of $\pi$. By \cite[4.2. Proposition]{BH-JLC}, the attached pair $(E/F,\xi)$ is admissible and the correspondence
\begin{equation}\label{level zero corresp}
  \mathcal{A}^\mathrm{et}_m(D)_{level-0}\rightarrow P_n(F)_{level-0},\,\pi\mapsto (E/F,\xi),
\end{equation}
is bijective. We can show that $\pi={\mathrm{cInd}}_{\mathbf{J}}^{G(F)}\Lambda$, where the condition $\delta(\pi)=n$ implies that $\mathbf{J}=F^\times {\mathrm{GL}}_m(\mathfrak{o}_D)$, and $(\mathbf{J},\Lambda)$ is defined by the conditions
\begin{equation}\label{condition-extension-level-0}
  \Lambda|_{{\mathrm{GL}}_m(\mathfrak{o}_D)}=\lambda\text{ and }\Lambda|_{F^\times}\text{ is a multiple of } \xi|_{F^\times}.
\end{equation}
The representation $(\mathbf{J},\Lambda)$ is called an extended maximal simple type of level zero.

In the positive level case, we first recall the construction of a extended maximal simple type in general. Suppose we have a simple character $(H^1,\theta)$. For example, we can construct a simple character $\theta=\theta_{\Xi,E}$ as in Section \ref{section Simple Characters} using an admissible pair $(E/F,\xi)$. We notice that the commutator subgroup $[J^1, J^1]$ lies in $H^1$ \cite[(3.1.15)]{BK}. By \cite[Th{\'e}or{\`e}me 3.52]{Secherre1}, the above simple character $ \theta$ induces an non-degenerate alternating bilinear form
\begin{equation}\label{alternating form assoc to simple character}
  h_\theta(x,y)=\theta([1+x,1+y])\text{, for all }1+x,1+y\in J^1,
\end{equation}
 on the $\mathbb{F}_p$-vector space
$${}_A\mathfrak{V}_\xi:=J^1/H^1.$$
  The classical theory of Heisenberg representation implies that there is a unique representation $\bar{\eta}$ of $J^1/\ker\theta$ containing the character $\theta$ of $H^1/\ker\theta$ as a central character. We then define ${\eta}$ as the inflation of $\bar{\eta}$ to $J^1$. By \cite[Th{\'e}or{\`e}me 2.28]{Secherre2}, there exists a unique irreducible representation $(J,\kappa)$, which is called a $\beta$-extension (or wide extension in \cite{BH-JLC}) of $\eta$ and satisfies certain conditions on its intertwining in $G(F)$ (see \cite[(2.5.5)]{BH-JLC}). We now choose a maximal simple type $( {\mathrm{GL}}_{m_0}(\mathfrak{o}_{D_0}),\sigma)$ of $A_0^\times=Z_{G(F)}(E_0^\times)$ of level zero and inflate it to a representation $({J},\sigma)$, since we know that ${J}={\mathrm{GL}}_{m_0}(\mathfrak{o}_{D_0})J^1$. We obtain a maximal simple type $(J,\lambda)$, where $\lambda=\kappa\otimes \sigma$. By \cite[Th{\'e}or{\`e}me 5.2]{Secherre3}, there exists an irreducible representation $\Lambda$ of $\mathbf{J}={E_0}^\times J^1$ (by the condition $\delta(\pi)=n$), extending $\lambda$ and whose compact-induction to $G(F)$ is irreducible and supercuspidal. The representation $(\mathbf{J},\Lambda)$ is called an extended maximal simple type. By \cite[Lemma 2 of Section 4.3]{BH-JLC}, we can fix a unique extended type containing $(J,\lambda)$ and satisfying the (non-canonical) conditions:
  $$\varpi_F\in \ker\Lambda\text{ and }\det\Lambda\text{ has a }p\text{-power order}.$$

Following \cite[Section 3 and 4]{BH-JLC}, we have to approach indirectly to describe the inverse of ${}_D\Pi$. Suppose that we are given a representation $\pi\in \mathcal{A}^\mathrm{et}_m(D)$ of positive level. By \cite[Th{\'e}or{\`e}me 5.21]{Secherre4}, it contains an extended maximal simple type $( \mathbf{J}, \Lambda)$ of the above form, such that $\Lambda|_{H^1}$ is a multiple of a simple character $(H^1,\theta)$. There is a unique character $\xi_w$ of $E^\times$, depending on $\theta|_{U^1_{\mathfrak{A}_0}}$, satisfying the conditions in (\ref{xi-1 conditions}). In particular, we have
$$\theta|_{U^1_{\mathfrak{A}_0}}=\Xi\circ \mathrm{Nrd}_{A_0/E_0}\text{, such that }\xi_w|_{U_{E}^1}=\Xi\circ N_{E/E_0}.$$
By the discussion of the previous paragraph, we can attach to $\xi_w$ an extended maximal simple type $( \mathbf{J}, \Lambda_w)$ such that $\Lambda\cong \Lambda_{-1}\otimes \Lambda_w$ for a uniquely determined extended maximal simple type $( \mathbf{J},\Lambda_{-1})$ of level zero. Attached to $( \mathbf{J},\Lambda_{-1})$ is a level zero character $\xi_{-1}$ of $E^\times$ admissible over $E_0$, as mentioned in the level zero case. Finally, by \cite[4.3. Proposition]{BH-JLC}, the attached pair $(E/F,\xi)$, where $\xi=\xi_{-1}\xi_w$, is admissible and independent of the various choices above. We call $(E/F,\xi)$ a pair attached to $\pi$.

The technical part is to show that the attaching map
 $$\mathcal{A}^\mathrm{et}_m(D)\rightarrow P_n(F),\, \pi\mapsto (E/F,\xi)$$
 is well-defined and injective. This is done in the Parametrization Theorem of \cite[Section 6]{BH-JLC}. The composition (\ref{composition is not identity}) is then injective (since the maps ${}_F\Pi$ and $JL$ are known to be bijective) and preserves the restriction of each character to the subgroup $F^\times U_E^1$, which is of finite index of $E^\times$. Therefore, the map in (\ref{composition is not identity}) and hence ${}_D\Pi$ in (\ref{bijection of P and DPi}), is bijective.

\subsection{Finite symplectic modules}

Since the group $\mathbf{J}$ normalizes the subgroups $H^1$, $J^1$, and the simple character $\theta$ of $H^1$, it acts on the finite quotient ${}_A\mathfrak{V}_\xi := J^1/H^1$. This quotient is denoted by ${}_M\mathfrak{V}_{\xi}$ in the split case $A=M$, which is considered in \cite{BH-ET3} and \cite{thesis}. Notice that the quotient is clearly a finite dimensional $\mathbb{F}_p$-vector space. The action of $\mathbf{J}$ induces a symplectic $\mathbb{F}_p\mathbf{J}$-module structure on this space with respect to the non-degenerate alternating form $h_\theta$ in (\ref{alternating form assoc to simple character}). We have a decomposition of
\begin{equation}\label{coarse decomp of V}
  {}_A\mathfrak{V}_\xi={}_A\mathfrak{V}_{\xi,0} \oplus \cdots \oplus {}_A\mathfrak{V}_{\xi,t},
\end{equation}
into $\mathbb{F}_p\mathbf{J}$-submodules, where
\begin{equation}\label{AVk}
\begin{split}
  {}_A\mathfrak{V}_{\xi,k} &=U_{\mathfrak{A}_{k+1}}^{a_ke(\mathfrak{A}_{k+1}/\mathfrak{o}_E)/2} / U_{\mathfrak{A}_{k}}^{a_{k}e(\mathfrak{A}_k/\mathfrak{o}_E)/2} U_{\mathfrak{A}_{k+1}}^{(a_ke(\mathfrak{A}_{k+1}/\mathfrak{o}_E)/2)+}
  \\
  &=\mathfrak{P}_{\mathfrak{A}_{k+1}}^{a_ke(\mathfrak{A}_{k+1}/\mathfrak{o}_E)/2} / \mathfrak{P}_{\mathfrak{A}_{k}}^{a_{k}e(\mathfrak{A}_k/\mathfrak{o}_E)/2}+ \mathfrak{P}_{\mathfrak{A}_{k+1}}^{(a_ke(\mathfrak{A}_{k+1}/\mathfrak{o}_E)/2)+}
\end{split}
\end{equation}
 for $k=0,\dots,t$. In this paper, We call this decomposition the \emph{coarse decomposition} of ${}_A\mathfrak{V}_\xi$. By \cite[Proposition 3.9]{Secherre1}, the decomposition (\ref{coarse decomp of V}) is orthogonal.

In the sequel, we will be interested in the adjoint action of $E^\times$ on ${}_A\mathfrak{V}_{\xi}$ restricted from that of $\mathbf{J}$, which factors through the finite group $\Psi_{E/F}:=E^\times/F^\times(E^\times\cap  J^1) $.

The following Proposition appears in {\cite[Proposition 5.6]{BH-JLC}}. We re-interpret its proof here.
\begin{prop}\label{totally ram AV equals MV}
If $E/F$ is totally ramified, then ${}_A\mathfrak{V}_{\xi}\cong {}_M\mathfrak{V}_{\xi}$.
\end{prop}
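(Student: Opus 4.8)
The plan is to compare the two modules summand by summand through the orthogonal coarse decomposition \eqref{coarse decomp of V}, and then to show that each summand is reconstructed from a root datum, from valuations in $E/F$, from the elements $c_k$, and from $\psi_F$ --- data that no longer see the ambient algebra once $E/F$ is totally ramified. I first record the simplifications the hypothesis forces. Since $E/E_0$ is unramified by the Howe factorization \eqref{Howe factorization} while being a subextension of the totally ramified $E/F$, we have $E=E_0$; moreover every intermediate field $E_k$ of the jump data is totally ramified over $F$, so each $E/E_k$ is totally ramified and hence, $E$ being a maximal totally ramified subfield of $A_k$ with $\mathfrak{A}_k\cap E=\mathfrak{o}_E$, one has $e(\mathfrak{A}_k/\mathfrak{o}_E)=1$ for every $k$ --- and likewise for $M$. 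Substituting $e(\mathfrak{A}_k/\mathfrak{o}_E)=1$ into \eqref{group H(xi) J(xi)} and \eqref{AVk}, the groups $H^1,J^1$ and the summands of \eqref{coarse decomp of V} become uniform:
$$
{}_A\mathfrak{V}_{\xi,k}=\mathfrak{P}_{\mathfrak{A}_{k+1}}^{a_k/2}\,\big/\,\bigl(\mathfrak{P}_{\mathfrak{A}_{k}}^{a_k/2}+\mathfrak{P}_{\mathfrak{A}_{k+1}}^{(a_k/2)+}\bigr),\qquad(k=0,\dots,t),
$$
where $\mathfrak{A}_{t+1}=\mathfrak{A}$ and $\mathfrak{A}_0=\mathfrak{o}_E$, and which vanishes unless $a_k$ is even. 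As \eqref{coarse decomp of V} is orthogonal for $h_\theta$, it suffices to produce, for each $k$, an isomorphism ${}_A\mathfrak{V}_{\xi,k}\xrightarrow{\sim}{}_M\mathfrak{V}_{\xi,k}$ of symplectic $E^\times$-modules.

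Next I would check that the data entering each ${}_A\mathfrak{V}_{\xi,k}$ are already common to $A$ and $M$. Because $e(\mathfrak{A}/\mathfrak{o}_E)=1$ and $\mathbf{k}_E=\mathbf{k}_F$, one has $E^\times\cap U_{\mathfrak{A}}^i=U_E^i$ for $i\ge 1$, hence $E^\times\cap J^1=U_E^1$; so the adjoint action of $E^\times$ factors through $\Psi_{E/F}=E^\times/F^\times U_E^1$, a cyclic group of order $n$ generated by the class of $\varpi_E$, \emph{independently of $A$}. The alternating form carried by ${}_A\mathfrak{V}_{\xi,k}$ is, by the inductive construction of $\theta=\theta_{\Xi,E}$ in \eqref{simple char inductive step}, by \eqref{multiplicative char and additive char c_k}, and by \eqref{alternating form assoc to simple character}, a commutator pairing determined by $\psi_F$, a reduced trace, and the element $c_k\in E_k\cap\mathfrak{p}_E^{-a_k}$ attached to $\xi_k$ (concretely, of the form $(x,y)\mapsto\psi_F(\mathrm{trd}(c_k(xy-yx)))$), and $c_k$ lies in $E_k\subseteq E$. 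Finally the root system $\Phi=\Phi(G,T)$, the orbit set $\Gamma_F\backslash\Phi$ and the fields $E_\lambda$ are read off from $\Gamma_F/\Gamma_E$ alone, so they agree with those attached to $G^*$.

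The heart of the argument is to remove the dependence on $A$. I would decompose the $E$-bimodule $A_{k+1}$ under the adjoint torus action as $A_{k+1}=E\oplus\bigoplus_{\lambda}(A_{k+1})_\lambda$, the sum running over the roots $\lambda\in\Phi$ trivial on $E_{k+1}^\times$ (permuted by $\Gamma_F$). Since $e(\mathfrak{A}_{k+1}/\mathfrak{o}_E)=1$, the radical is principal, $\mathfrak{P}_{\mathfrak{A}_{k+1}}^i=\varpi_E^i\mathfrak{A}_{k+1}$, and the embedding conditions of Section \ref{subsection An embedding condition} pin $\mathfrak{A}_{k+1}$ down as the unique $E$-pure hereditary order of $A_{k+1}$ meeting $E$ in $\mathfrak{o}_E$; consequently, after passing to a splitting field and bookkeeping the $\Gamma_F$-action, the filtration induced by $\{\mathfrak{P}_{\mathfrak{A}_{k+1}}^i\}$ on each root lattice $\mathfrak{A}_{k+1}\cap(A_{k+1})_\lambda$ is the standard $\mathfrak{o}_{E_\lambda}$-adic filtration of a fractional ideal whose exponent depends only on $E/F$. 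Passing to the associated graded in degree $a_k/2$ and quotienting by the image of $\mathfrak{A}_k$ then exhibits ${}_A\mathfrak{V}_{\xi,k}$ as $\bigoplus_\lambda(\text{a }\mathbf{k}_F\text{-line or }0)$, in which the surviving set of $\lambda$, the permuting-and-scaling action of $\Psi_{E/F}$ on the lines, and the $\psi_F$-pairing between the $\lambda$-line and the $(-\lambda)$-line are all expressed purely in terms of $E/F$, $a_k$, $c_k$ and $\psi_F$. Every ingredient being common to $A$ and $M$, the identical recipe produces ${}_M\mathfrak{V}_{\xi,k}$; summing over $k$ and descending from the splitting field back to $F$ (standard Galois descent, all the structures being $\Gamma_F$-equivariant) yields ${}_A\mathfrak{V}_\xi\cong{}_M\mathfrak{V}_\xi$ as symplectic $E^\times$-modules.

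The main obstacle is this last step: showing that, under $e(\mathfrak{A}_{k+1}/\mathfrak{o}_E)=1$, the filtration induced by $\{\mathfrak{P}_{\mathfrak{A}_{k+1}}^i\}$ on each root lattice $\mathfrak{A}_{k+1}\cap(A_{k+1})_\lambda$ is indeed the standard $\mathfrak{o}_{E_\lambda}$-adic one with an $A$-independent exponent, and that the commutator pairing restricts to the standard perfect pairing between the lines for $\lambda$ and $-\lambda$. Here total ramification must be used in earnest, not merely $e(\mathfrak{A}/\mathfrak{o}_E)=1$: the equality $\mathbf{k}_E=\mathbf{k}_F$ confines the genuine discrepancy between $\mathfrak{A}/\mathfrak{P}_{\mathfrak{A}}$ and $\mathfrak{M}/\mathfrak{P}_{\mathfrak{M}}$ --- a product of copies of $\mathbf{k}_D$ versus a product of copies of $\mathbf{k}_F$ --- to the degree-zero graded piece, which never contributes to ${}_A\mathfrak{V}_\xi$ --- the latter lives entirely in strictly positive $\mathfrak{P}_\mathfrak{A}$-degree (that is, $a_k/2\ge1$ on every nonzero summand). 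Once this filtration bookkeeping is in place, the remaining points --- orthogonality (already available), compatibility with the common action of $\Psi_{E/F}$, and the descent --- are formal.
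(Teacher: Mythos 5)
Your reduction is the right one, and it runs parallel to the paper's own proof: in the totally ramified case all the indices $e(\mathfrak{A}_k/\mathfrak{o}_E)$ and $e(\mathfrak{M}_k/\mathfrak{o}_E)$ equal $1$, the coarse decomposition (\ref{coarse decomp of V}) is orthogonal, and the whole question becomes whether the graded quotients $\mathfrak{P}^j_{\mathfrak{A}_k}/\mathfrak{P}^{j+1}_{\mathfrak{A}_k}$ and $\mathfrak{P}^j_{\mathfrak{M}_k}/\mathfrak{P}^{j+1}_{\mathfrak{M}_k}$ agree as $\mathbf{k}_F\Psi_{E/E_k}$-modules; matching the symplectic forms is then automatic by Proposition \ref{hyper and aniso and properties}(\ref{symplectic determined by underlying module}), so that half of your ``main obstacle'' can simply be discarded. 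But the module comparison itself is the decisive step, and you do not prove it: you flag it as the obstacle, and the heuristic you substitute for it is false. The discrepancy between $A$ and $M$ is \emph{not} confined to the degree-zero graded piece: for every $j$ one has $\mathfrak{P}^j_{\mathfrak{A}}/\mathfrak{P}^{j+1}_{\mathfrak{A}}\cong\mathrm{Mat}_s(\mathbf{k}_D)^r$, so all graded pieces of $\mathfrak{A}$ are built out of $\mathbf{k}_D$, and for general $E/F$ their $\Psi_{E/F}$-module structure genuinely differs from the split case --- that is exactly the congruence condition $j\equiv hj'\bmod f/s$ in Proposition \ref{decomp-of-standard-module-sim-alg}, which depends on the Hasse invariant $h$. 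Likewise, your assertion that the $\mathfrak{P}_{\mathfrak{A}_{k+1}}$-filtration induces the standard $\mathfrak{o}_{E_\lambda}$-adic filtration on each root lattice ``with an $A$-independent exponent'' is a restatement of the statement to be proved, not an argument for it.

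What total ramification actually buys --- and what the paper's proof invokes through Propositions \ref{prop decomp-of-standard-module-div-alg} and \ref{decomp-of-standard-module-sim-alg} --- is that $z_{E/F}\in\boldsymbol{\mu}_F$, hence $z_{E/F}^{q^{hj}-1}=1$ and each $z_{\phi^{hj}}$ is itself an $e$th root of unity, while the residue field contributes no unramified characters; consequently the conjugation action of $\varpi_E$ on each graded piece realizes every character of $\Psi_{E/E_k}$ exactly once, i.e.\ each $\mathfrak{P}^j_{\mathfrak{A}_k}/\mathfrak{P}^{j+1}_{\mathfrak{A}_k}$ is the regular module $\mathrm{Ind}_1^{\Psi_{E/E_k}}\mathbf{k}_F$, independently of $j$ and of $h(D)$, and the same holds for $\mathfrak{M}_k$. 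Substituting this into (\ref{AVk}), with all the indices equal to $1$ on both sides, immediately gives ${}_A\mathfrak{V}_{\xi,k}\cong{}_M\mathfrak{V}_{\xi,k}$ for every $k$. So your outline points at the correct target, but the eigenvalue computation on the graded pieces (or an equivalent of Propositions \ref{prop decomp-of-standard-module-div-alg}--\ref{decomp-of-standard-module-sim-alg}) is missing, and without it the proposal does not constitute a proof.
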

\proof
From the proofs of Propositions \ref{prop decomp-of-standard-module-div-alg} and \ref{decomp-of-standard-module-sim-alg} (which are purely algebra and do not require the knowledge of this section), we see that the totally ramified condition implies that $$\mathfrak{P}^j_{\mathfrak{M}_{k}}/\mathfrak{P}^{j+1}_{\mathfrak{M}_{k}}\cong \mathfrak{P}^j_{\mathfrak{A}_{k}}/\mathfrak{P}^{j+1}_{\mathfrak{A}_{k}}\cong \mathrm{Ind}_1^{\Psi_{E/E_k}}\mathbf{k}_{F}$$
as a $\mathbf{k}_{F}\Psi_{E/E_k}$-module, for all $j\in \mathbb{Z}$ and $k=1,\dots,t+1$. Moreover, each index $e(\mathfrak{A}_k/\mathfrak{o}_E)$ (appearing in the powers in (\ref{AVk})) is divisible by $f(E/F)$, which is equal to 1. Hence from the expression in (\ref{AVk}), we see that $ {}_A\mathfrak{V}_{\xi,k}$ and $ {}_M\mathfrak{V}_{\xi,k}$ are the same.
\qed

\subsection{Invariants of finite symplectic modules}\label{section Invariants of finite symplectic modules}

Let ${\boldsymbol{\Gamma}}$ be  a finite cyclic group whose order is not divisible by $p$. We call a finite $\mathbb{F}_p{\boldsymbol{\Gamma}}$-module ${V}$ \emph{symplectic} if there is a non-degenerate alternating form $\mathbf{h}:{V}\times {V}\rightarrow\mathbb{F}_p$ which is  ${\boldsymbol{\Gamma}}$-invariant, in the sense that
\begin{equation*}
\mathbf{h}({}^\gamma v_1,{}^\gamma v_2)=\mathbf{h}(v_1,v_2)\text{, for all }\gamma\in {\boldsymbol{\Gamma}},\,v_1,v_2\in {V}.
\end{equation*}
The simple module ${V}_{\lambda}$ corresponding to a character $\lambda\in \mathrm{Hom}({\boldsymbol{\Gamma}},\bar{\mathbb{F}}_p^\times)$ is the field  $\mathbb{F}_p[\lambda({\boldsymbol{\Gamma}})]$ generated over $\mathbb{F}_p$ by the image $\lambda({\boldsymbol{\Gamma}})$, with $\boldsymbol{\Gamma} $-action
$${}^\gamma v=\lambda(\gamma)v\text{, for all }\gamma\in \boldsymbol{\Gamma},\,v\in {V}_{\lambda}.$$
  Its $\mathbb{F}_p$-linear dual $ {V}_{\lambda}^*=\mathrm{Hom}({V}_{\lambda},{\mathbb{F}}_p)$ is  isomorphic to ${V}_{\lambda^{-1}}$ by the map
  $${V}_{\lambda^{-1}}\rightarrow  {V}_{\lambda}^*,\,v\mapsto (w\mapsto \mathrm{tr}_{\mathbb{F}_p[\lambda({\boldsymbol{\Gamma}})]/\mathbb{F}_p}(wv)),$$
   such that the canonical pairing $\left<\cdot,\cdot\right>:{V}_{\lambda}\times {V}_{\lambda^{-1}}\rightarrow \mathbb{F}_p$ is $\boldsymbol{\Gamma}$-invariant.

We recall some basic facts from  \cite[(8.2.3)]{BF} and \cite[Sec. 3, Prop. 4]{BH-ET3}.
\begin{prop} \label{hyper and aniso and properties}
\begin{enumerate}[(i)]
\item An indecomposable symplectic $\mathbb{F}_p{\boldsymbol{\Gamma}}$-module is isomorphic to either one of the following two kinds,
\begin{enumerate}
\item a \emph{hyperbolic} module of the form $\mathbf{V}_{\lambda}= {V}_{\lambda}\oplus {V}_{\lambda^{-1}}$ such that either $\lambda^2=1$ or ${V}_{\lambda}\ncong {V}_{\lambda^{-1}}$, with the alternating form
    $$\mathbf{h}_{\mathbf{V}_{\lambda}}((v_1,v_1^*),(v_2,v_2^*))=\left<v_1,v_2^*\right>-\left<v_2,v_1^*\right>\text{, for all }(v_1,v_1^*),(v_2,v_2^*)\in \mathbf{V}_{\lambda};$$ \label{def hyper and aniso hyper}
\item an \emph{anisotropic} module of the form ${V}_\lambda$ with $\lambda^2\neq1$ and ${V}_{\lambda}\cong {V}_{\lambda^{-1}}$. In this case, $ [\mathbb{F}_p[\lambda({\boldsymbol{\Gamma}})] : \mathbb{F}_p]$ is even and the alternating form $\mathbf{h}_{\mathbf{V}_{\lambda}}$ is defined, up to ${\boldsymbol{\Gamma}}$-isometry, by
    $$(v_1,v_2)\mapsto \mathrm{tr}_{\mathbb{F}_p[\lambda({\boldsymbol{\Gamma}})]/\mathbb{F}_p}(\alpha v_1\bar{v}_2)\text{, for all }v_1,v_2\in {V}_\lambda,$$
    where $v\mapsto \bar{v}$ is the $\mathbb{F}_p$-automorphism of $\mathbb{F}_p[\lambda({\boldsymbol{\Gamma}})]$ of order 2 and $\alpha\in\mathbb{F}_p[\lambda({\boldsymbol{\Gamma}})]^\times $ satisfies $\bar{\alpha}=-\alpha$. \label{def hyper and aniso aniso}
\end{enumerate} \label{def hyper and aniso}
\item   If ${V}_\lambda$ is anisotropic and $\mathbb{F}_p[\lambda(\Gamma)]_{\pm}$ denotes the subfield of $\mathbb{F}_p[\lambda(\Gamma)]$ such that $\mathbb{F}_p[\lambda(\Gamma)]/\mathbb{F}_p[\lambda(\Gamma)]_\pm$ is quadratic, then $\lambda({\boldsymbol{\Gamma}})$ is a subgroup of $\ker( N_{\mathbb{F}_p[\lambda({\boldsymbol{\Gamma}})] / \mathbb{F}_p[\lambda({\boldsymbol{\Gamma}})]_\pm})$ \label{even deg from def of aniso}
\item The ${\boldsymbol{\Gamma}}$-isometry class of a symplectic $\mathbb{F}_p{\boldsymbol{\Gamma}}$-module $({V},\mathbf{h})$ is determined by the underlying $\mathbb{F}_p{\boldsymbol{\Gamma}}$-module ${V}$. \label{symplectic determined by underlying module}
\end{enumerate}
\end{prop}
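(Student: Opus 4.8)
The plan is to reduce everything to the isotypic decomposition of $V$ as an $\mathbb{F}_p\boldsymbol{\Gamma}$-module together with an analysis of how the form $\mathbf{h}$ couples the isotypic components. Since $p\nmid\#\boldsymbol{\Gamma}$, Maschke's theorem makes $\mathbb{F}_p\boldsymbol{\Gamma}$ semisimple, its simple modules being precisely the $V_\lambda=\mathbb{F}_p[\lambda(\boldsymbol{\Gamma})]$, with $V_\lambda\cong V_\mu$ if and only if $\mu$ lies in the orbit of $\lambda$ under $x\mapsto x^p$. Writing $V=\bigoplus_{[\lambda]}V[\lambda]$ for the isotypic decomposition, Schur's lemma together with the $\boldsymbol{\Gamma}$-invariance of $\mathbf{h}$ shows that $\mathbf{h}$ vanishes on $V[\lambda]\times V[\mu]$ unless $V_\mu\cong V_\lambda^*\cong V_{\lambda^{-1}}$, and that it induces a perfect pairing between $V[\lambda]$ and $V[\lambda^{-1}]$. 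In particular an indecomposable symplectic module is supported on a single pair $\{[\lambda],[\lambda^{-1}]\}$, and it remains to treat three cases according to the relation between $\lambda$ and $\lambda^{-1}$.

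If $V_\lambda\not\cong V_{\lambda^{-1}}$, then $V[\lambda]$ and $V[\lambda^{-1}]$ are each totally isotropic, and the perfect pairing between them lets one peel off hyperbolic summands $V_\lambda\oplus V_{\lambda^{-1}}$ one at a time until, by indecomposability, a single one remains; comparing the pairing with the canonical one of Section \ref{section Invariants of finite symplectic modules} (they differ by an element of $\mathrm{End}_{\boldsymbol{\Gamma}}(V_\lambda)^\times$, which can be absorbed by rescaling one factor) recovers the displayed hyperbolic form. If $V_\lambda\cong V_{\lambda^{-1}}$ with $\lambda^2=1$, then $\lambda(\boldsymbol{\Gamma})\subseteq\{\pm1\}$, so $V_\lambda$ is one-dimensional over $\mathbb{F}_p$ and carries no non-degenerate alternating form; since $\boldsymbol{\Gamma}$ acts on the block $V[\lambda]$ by a scalar, every $\mathbb{F}_p$-subspace is a submodule, the restriction of $\mathbf{h}$ is just a symplectic form on an $\mathbb{F}_p$-space, and one again peels off standard hyperbolic planes $V_\lambda\oplus V_\lambda=V_\lambda\oplus V_{\lambda^{-1}}$, leaving one in the indecomposable case.

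If $V_\lambda\cong V_{\lambda^{-1}}$ with $\lambda^2\neq1$, I would pick the least $i\geq1$ with $\lambda^{p^i}=\lambda^{-1}$; then $x\mapsto x^{p^i}$ is an $\mathbb{F}_p$-automorphism $v\mapsto\bar v$ of $\mathbb{F}_p[\lambda(\boldsymbol{\Gamma})]$ acting by inversion on $\lambda(\boldsymbol{\Gamma})$ and of order exactly $2$ (its square fixes the generators $\lambda(\gamma)$, and it is non-trivial since $\lambda^2\neq1$), so $[\mathbb{F}_p[\lambda(\boldsymbol{\Gamma})]:\mathbb{F}_p]$ is even and $\mathbb{F}_p[\lambda(\boldsymbol{\Gamma})]/\mathbb{F}_p[\lambda(\boldsymbol{\Gamma})]_{\pm}$ is the quadratic extension fixed by $v\mapsto\bar v$. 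Identifying $\mathrm{End}_{\boldsymbol{\Gamma}}(V_\lambda)$ with $\mathbb{F}_p[\lambda(\boldsymbol{\Gamma})]$ acting by multiplication and using the $\boldsymbol{\Gamma}$-isomorphism $v\mapsto\bar v$ from $V_\lambda$ to $V_{\lambda^{-1}}$, one sees that $\mathrm{Hom}_{\boldsymbol{\Gamma}}(V_\lambda,V_\lambda^*)\cong\mathrm{Hom}_{\boldsymbol{\Gamma}}(V_\lambda,V_{\lambda^{-1}})$ is free of rank one over $\mathbb{F}_p[\lambda(\boldsymbol{\Gamma})]$; hence every $\boldsymbol{\Gamma}$-invariant bilinear form on $V_\lambda$ has the shape $(v,w)\mapsto\mathrm{tr}_{\mathbb{F}_p[\lambda(\boldsymbol{\Gamma})]/\mathbb{F}_p}(\alpha v\bar w)$, and a short computation (factoring the trace through the fixed field) shows it is alternating exactly when $\bar\alpha=-\alpha$ and non-degenerate exactly when $\alpha\neq0$, which produces the anisotropic model. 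Part (ii) then follows at once: since $v\mapsto\bar v$ inverts $\lambda(\boldsymbol{\Gamma})$, one has $N_{\mathbb{F}_p[\lambda(\boldsymbol{\Gamma})]/\mathbb{F}_p[\lambda(\boldsymbol{\Gamma})]_{\pm}}(\lambda(\gamma))=\lambda(\gamma)\lambda(\gamma)^{-1}=1$ for every $\gamma\in\boldsymbol{\Gamma}$.

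For the uniqueness up to $\boldsymbol{\Gamma}$-isometry claimed in the anisotropic case and, more generally, for (iii), the key point is that replacing $v$ by $cv$ with $c\in\mathbb{F}_p[\lambda(\boldsymbol{\Gamma})]^\times$ changes $\alpha$ to $\alpha\,N_{\mathbb{F}_p[\lambda(\boldsymbol{\Gamma})]/\mathbb{F}_p[\lambda(\boldsymbol{\Gamma})]_{\pm}}(c)$, so any two admissible $\alpha$'s are related because the norm map of a finite extension is surjective; equivalently, non-degenerate Hermitian (or skew-Hermitian) forms over a finite field are classified by their rank. This is the one genuinely arithmetic ingredient, and it is also the step I expect to require the most care: granting it, in each of the three cases the $\boldsymbol{\Gamma}$-isometry type of $\mathbf{h}$ on an isotypic block $V_\lambda^{\oplus k}$ (respectively $V[\lambda]\oplus V[\lambda^{-1}]$) depends only on $k$, hence only on the underlying module, and an orthogonal-sum decomposition then pins down the whole module. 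The remaining subtleties are just bookkeeping --- getting the Frobenius orbit of $\lambda$ right when identifying $V_\lambda^*$ with $V_{\lambda^{-1}}$, and the degenerate behaviour at $p=2$ (where $\lambda^2=1$ forces $\lambda$ trivial and $\bar\alpha=-\alpha$ becomes $\bar\alpha=\alpha$). Alternatively, the entire proposition can be quoted directly from \cite[(8.2.3)]{BF} and \cite[Sec. 3, Prop. 4]{BH-ET3}.
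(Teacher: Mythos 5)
Your argument is essentially correct, but note that the paper itself offers no proof of this proposition: it is stated as a recollection of known facts and quoted directly from \cite[(8.2.3)]{BF} and \cite[Sec.~3, Prop.~4]{BH-ET3}, exactly as you suggest in your last sentence. So your proposal is a genuinely self-contained alternative to the paper's citation, and it follows the same lines as the argument in those references: semisimplicity of $\mathbb{F}_p{\boldsymbol{\Gamma}}$ (since $p\nmid\#{\boldsymbol{\Gamma}}$), orthogonality of the pairing between isotypic components except for the coupling of $[\lambda]$ with $[\lambda^{-1}]$, the three-way case split according to whether $\lambda^{-1}$ lies in the Frobenius orbit of $\lambda$ and whether $\lambda^2=1$, and, in the self-dual case, the identification of invariant forms on $V_\lambda^{\oplus k}$ with skew-Hermitian forms over $\mathbb{F}_p[\lambda({\boldsymbol{\Gamma}})]$ relative to the order-two automorphism $x\mapsto x^{p^{d/2}}$. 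You have correctly isolated the one arithmetic input, namely that the norm of a finite extension of finite fields is surjective (equivalently, that non-degenerate Hermitian forms over a finite field are classified by rank); this is what makes the anisotropic form unique up to ${\boldsymbol{\Gamma}}$-isometry and gives (iii), and it also explains, as your case analysis implicitly shows, why $V_\lambda\oplus V_{\lambda^{-1}}$ with $\lambda^2\neq1$ and $V_\lambda\cong V_{\lambda^{-1}}$ is absent from the hyperbolic list: such a space splits orthogonally into two anisotropic lines, so the only indecomposables of that type are single copies of $V_\lambda$. Part (ii) is, as you say, immediate from $\overline{\lambda(\gamma)}=\lambda(\gamma)^{-1}$. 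What the paper's citation buys is brevity; what your write-up buys is an explicit proof making visible where each hypothesis (cyclicity is not even needed, only $p\nmid\#{\boldsymbol{\Gamma}}$ and commutativity of the image) and the finite-field arithmetic enter, including the mild degeneration at $p=2$ that the paper later exploits when noting $t^1_{\boldsymbol{\Gamma}}$ is trivial there.
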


Part (\ref{symplectic determined by underlying module}) is particularly useful because, when we talk about invariants of  ${\boldsymbol{\Gamma}}$-isometry classes of symplectic $\mathbb{F}_p{\boldsymbol{\Gamma}}$-modules, we do not have to write down the alternating forms explicitly.

Given a finite symplectic $\mathbb{F}_p{\boldsymbol{\Gamma}}$-module $\mathfrak{V}$, we attach a sign $t^0_{\boldsymbol{\Gamma}}(\mathfrak{V})\in \{\pm1\}$ and a quadratic character $t^1_{\boldsymbol{\Gamma}}(\mathfrak{V})$ of ${\boldsymbol{\Gamma}}$. We also set $$t_{\boldsymbol{\Gamma}}(\mathfrak{V})=t^0_{\boldsymbol{\Gamma}}(\mathfrak{V})t^1_{\boldsymbol{\Gamma}}(\mathfrak{V})(\gamma),$$ where $\gamma$ is any generator of ${\boldsymbol{\Gamma}}$. We call these \emph{t-factors} of $\mathfrak{V}$.

We recall from \cite[Section 3]{BH-ET3} the definition the t-factors.
\begin{enumerate}[(i)]
\item If ${\boldsymbol{\Gamma}}$ acts on $\mathfrak{V}$ trivially, then $$t^0_{\boldsymbol{\Gamma}}(\mathfrak{V})=1\text{ and }t^1_{\boldsymbol{\Gamma}}(\mathfrak{V})\equiv1.$$
\item Let $\mathfrak{V}$ be an indecomposable symplectic $\mathbb{F}_p{\boldsymbol{\Gamma}}$-module.
\begin{enumerate}
\item If $\mathfrak{V}=\mathfrak{V}_\lambda\oplus \mathfrak{V}_{\lambda^{-1}}$ is hyperbolic, then $$t^0_{\boldsymbol{\Gamma}}(\mathfrak{V})=1\text{ and }t^1_{\boldsymbol{\Gamma}}(\mathfrak{V})=\mathrm{sgn}_{\lambda({\boldsymbol{\Gamma}})}(\mathfrak{V}_\lambda).$$
    Here $\mathrm{sgn}_{\lambda({\boldsymbol{\Gamma}})}(\mathfrak{V}_\lambda):{\boldsymbol{\Gamma}}\rightarrow\{\pm1\}$ is the character whose image $\gamma\mapsto \mathrm{sgn}_{\lambda(\gamma)}(\mathfrak{V}_\lambda)$ is the signature of the multiplication by $\lambda(\gamma)$ as a permutation of the set $\mathfrak{V}_\lambda$.
\item If $\mathfrak{V}=\mathfrak{V}_\lambda$ is anisotropic, then $$t^0_{\boldsymbol{\Gamma}}(\mathfrak{V})=-1\text{ and }t^1_{\boldsymbol{\Gamma}}(\mathfrak{V})(\gamma)=\left(\frac{\gamma}{\ker(N_{\mathbb{F}_p[\lambda({\boldsymbol{\Gamma}})]/\mathbb{F}_p[\lambda({\boldsymbol{\Gamma}})]_\pm})}\right)\text{ for any }\gamma\in {\boldsymbol{\Gamma}}.$$
    Here $\left(\frac{\cdot}{\cdot}\right)$ is the symbol defined as follows: for every finite cyclic group $H$, $$\left(\frac{x}{H}\right)=\begin{cases}
1 & \text{if }x\in H^2, \\
 -1 & \text{otherwise}.
\end{cases}$$
\end{enumerate}
\item If $\mathfrak{V}$ decomposes into an orthogonal sum $\mathfrak{V}_1\bot\cdots\bot \mathfrak{V}_t$ of indecomposable symplectic $\mathbb{F}_p{\boldsymbol{\Gamma}}$-modules, then $$t^i_{\boldsymbol{\Gamma}}(\mathfrak{V})=t^i_{\boldsymbol{\Gamma}}(\mathfrak{V}_1)\cdots t^i_{\boldsymbol{\Gamma}}(\mathfrak{V}_t)\text{ for }i=0,1.$$
\end{enumerate}
Notice that when $p=2$, the order of ${\boldsymbol{\Gamma}}$ is odd. In this case, $t^1_{\boldsymbol{\Gamma}}(\mathfrak{V})$ is always trivial, because all signature characters and symbols $\left(\frac{\cdot}{\cdot}\right)$ are trivial.

\subsection{Values of rectifiers}\label{section Values of rectifiers}

Given a tamely ramified extension $E/F$ and an $F$-admissible character $\xi$ of $E^\times$, let ${}_D\nu_\xi$ be the rectifier of $\xi$ defined in (\ref{rectifier}). To describe the values of ${}_D\nu_\xi$, we need to impose a condition on $\varpi_E$ defined in (\ref{e-th power of prime is also prime}):
\begin{equation}\label{addition condition on varpi_E}
  \varpi_E\in E_0,
\end{equation}
where $E_0$ be the first field appearing in the factorization (\ref{Howe factorization}) of $\xi$. This condition is the same as in the Second Comparison Theorem of \cite[Section 7]{BH-JLC}, where we further require that $\varpi_E^r\in F\text{ for some integer }r\text{ coprime to }p$. Indeed, from the assumption in (\ref{e-th power of prime is also prime}) this extra requirement is automatic in our situation. Under (\ref{addition condition on varpi_E}), the roots of unity ${{z}}_{E/F}$, ${{z}}_{\phi^i}$ (defined in Section \ref{section Galois groups}), and others related to $\varpi_E$ in later sections all depend on the first field $E_0$ in the jump data of $\xi$.

The values of the rectifier ${}_D\nu_\xi$ depends on the t-factors
$$t^1_{\boldsymbol{\mu}}({}_A\mathfrak{V}_{\xi}),\,t^1_{\boldsymbol{\mu}}({}_M\mathfrak{V}_{\xi}),\,t_\varpi({}_A\mathfrak{V}_{\xi})\text{ and } t_\varpi({}_M\mathfrak{V}_{\xi}),$$
where ${\boldsymbol{\mu}}={\boldsymbol{\mu}}_{E/F}$ and $\varpi=\varpi_{E/F}$ abbreviate the following subgroups of $\Psi_{E/F}=E^\times/F^\times U_E^1$,
\begin{equation}\label{finite cyclic subgp}
  {\boldsymbol{\mu}}:={\boldsymbol{\mu}}_E/{\boldsymbol{\mu}}_F\text{ and }\varpi:=\text{the subgroup generated by the image of }\varpi_E.
\end{equation}
By the First and Second Comparison Theorems of \cite{BH-JLC}, the rectifier ${}_D\nu_\xi$ has values
\begin{equation}\label{value of rectifier, sim alg}
 \begin{split}
   &{}_D{\nu}_{\xi}|_{{\boldsymbol{\mu}}_E}=t^1_{\boldsymbol{\mu}}({}_A\mathfrak{V}_{\xi})t^1_{\boldsymbol{\mu}}({}_M\mathfrak{V}_{\xi})
  \text{ and }
  \\
  &{}_D{\nu}_{\xi}(\varpi_E)=(-1)^{n-m+f_\varpi-m_\varpi}t_\varpi({}_A\mathfrak{V}_{\xi})t_\varpi({}_M\mathfrak{V}_{\xi}),
 \end{split}
\end{equation}
where $f_\varpi:=[E:F[\varpi_E]]=f(E/F[\varpi_E])$ and $m_\varpi=\gcd(m,f_\varpi)$.

In the case when $E/F$ is totally ramified, Proposition \ref{totally ram AV equals MV} implies that $${}_D{\nu}_{\xi}\text{ is unramified and } {}_D{\nu}_{\xi}(\varpi_E)=(-1)^{n-m},$$
as stated in \cite[5.3.Theorem]{BH-JLC}.

\section{Finite symplectic modules}\label{chapter Finite symplectic modules}

\subsection{Standard modules of central simple algebra}\label{section standard module}

Let $\mathfrak{A}$ be the hereditary $E$-pure order in $A$, as discussed in Section \ref{section hereditary order in CSA}. The isomorphism (\ref{A mod PA}) implies that $\mathfrak{P_A}^j/\mathfrak{P_A}^{j+1}\cong \mathrm{Mat}_s(\mathbf{k}_D)^r$ for all $j\in\mathbb{Z}$, where $s=\gcd(f,m)$ and $r=e/\gcd(d,e)$. We denote this quotient by $(\mathrm{Mat}_s(\mathbf{k}_D)^r)_j$ when we want to emphases the index $j$. Notice that as $\mathbf{k}_F\Psi_{E/F}$-modules, all  $(\mathrm{Mat}_s(\mathbf{k}_D)^r)_j$, for $j$ ranges over all $\mathbb{Z}$, are isomorphic to each other.

 When $\mathfrak{A}=\mathfrak{M}$ and $j=0$, we know that $\mathfrak{U}_\mathfrak{M}:=\mathfrak{M} / \mathfrak{P}_{\mathfrak{M}}\cong\mathrm{Mat}_n(\mathbf{k}_F)$ admits a root-space decomposition
\begin{equation*}
\mathfrak{U}_\mathfrak{M} \cong \mathfrak{U}_{0} \bigoplus_{[\lambda]\in\Gamma_F \backslash \Phi}\mathfrak{U}_{[\lambda]},
\end{equation*}
where $\mathfrak{U}_{0}\cong \mathfrak{o}_E/\mathfrak{p}_E$ on which $ \Psi_{E/F}$ acts trivially, and $\mathfrak{U}_{[\lambda]}$ is the $\mathbf{k}_F$-subspace on which $ \Psi_{E/F}$ acts by the character $\lambda$. Note that the equivalence class of the $\mathbf{k}_F\Psi_{E/F}$-module $\mathfrak{U}_{[\lambda]}$ depends only on the $\Gamma_F$-orbit of $\lambda$.

For future computation,  we rewrite the above decomposition as
\begin{equation}\label{decomp-of-standard-module-split-case}
\mathfrak{U}_\mathfrak{M}\cong  \bigoplus_{[g]\in\Gamma_E \backslash \Gamma_F / \Gamma_E}\mathfrak{U}_{[g]}
\end{equation}
using the identification in Proposition \ref{orbit of roots as double coset}. Here $\mathfrak{U}_{[g]}\cong \mathbf{k}_{E({}^gE)}$ as a $\mathbf{k}_F$-vector space for each $[g]\in\Gamma_E \backslash \Gamma_F / \Gamma_E $, and the $\Psi_{E/F}$-action on each $ v\in \mathfrak{U}_{[g]}$ is given as follows: if $[g]=[\sigma^i\phi^j]$, then
\begin{equation}\label{explicit action of psi}
  {}^{{z}} v= ({{z}}^{q^j-1})^{-1}v\text{ for all }{{z}}\in {\boldsymbol{\mu}}_E\text{ and }{}^{\varpi_E}v= ({{z}}^i_e {{z}}_{\phi^j})^{-1}v,
\end{equation}
where ${{z}}_e$ and $ {{z}}_{\phi^j}$ are defined in Section \ref{section Galois groups}.

For general inner form $G$, we first consider a simple case when  $A$ is a division algebra $D$. We write $(\mathbf{k}_D)_j:=\mathfrak{P}_D^j/\mathfrak{P}_D^{j+1}$ for each $j\in \mathbb{Z}$.
\begin{prop}\label{prop decomp-of-standard-module-div-alg}
For each $j\in\mathbb{Z}$, the $\mathbf{k}_F\Psi_{E/F}$-module $ (\mathbf{k}_D)_j$
  is isomorphic to
\begin{equation}\label{decomp-of-standard-module-div-alg}
\bigoplus_{[\sigma^i\phi^{hj}]\in\Gamma_E \backslash \Gamma_F / \Gamma_E} \mathfrak{U}_{[\sigma^i\phi^{hj}]}.
\end{equation}
\end{prop}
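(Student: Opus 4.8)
The plan is to reduce the statement for a division algebra $D$ to the already-understood split computation \eqref{decomp-of-standard-module-split-case}--\eqref{explicit action of psi} by exploiting the explicit presentation of $D$ fixed in Section \ref{Conjugate relations in a division algebra}. First I would fix the embedding $E\hookrightarrow D$ determined by $\varpi_E=\varpi_D^f z_{D/E}$, together with the relations \eqref{ad-varpi-D-on-zeta} and \eqref{ad-zeta-on-varpi-D-i}. The key algebraic fact is that $\mathbf{k}_D\cong\mathbf{k}_{K_n}$ as a field, and that for each $j$ the one-dimensional $\mathbf{k}_D$-space $(\mathbf{k}_D)_j=\mathfrak{P}_D^j/\mathfrak{P}_D^{j+1}$ is spanned by the image of $\varpi_D^j$; so as an $\mathbf{k}_F$-vector space $(\mathbf{k}_D)_j$ is isomorphic to $\mathbf{k}_{K_n}\cdot\overline{\varpi_D^j}$, of $\mathbf{k}_F$-dimension $n$.

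Next I would compute the adjoint action of $E^\times$, i.e. of $\Psi_{E/F}=E^\times/F^\times(E^\times\cap U_D^1)$, on $(\mathbf{k}_D)_j$. Since $\varpi_F$ acts trivially and $U_E^1$ acts trivially on the graded pieces, it suffices to compute the action of $z\in\boldsymbol{\mu}_E=\boldsymbol{\mu}_K$ and of $\varpi_E=\varpi_D^f z_{D/E}$. For $u\in\boldsymbol{\mu}_{K_n}$ representing a vector in $\mathbf{k}_D$, the conjugation $z(u\varpi_D^j)z^{-1}$ is computed using \eqref{ad-zeta-on-varpi-D-i}: it multiplies $u\varpi_D^j$ by $z^{1-q^{hj}}$. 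Likewise conjugation by $\varpi_E=\varpi_D^f z_{D/E}$ acts on $u\in\boldsymbol{\mu}_{K_n}$ through $\varpi_D^f$, which is the $\phi^{hf}$-Frobenius, combined with conjugation by $z_{D/E}\in\boldsymbol{\mu}_D$. One then decomposes $\mathbf{k}_{K_n}$ under this semilinear action into eigenspaces indexed by the characters of $\Psi_{E/F}$ that occur. Comparing with \eqref{explicit action of psi}, the eigencharacter data one obtains is exactly that of $\mathfrak{U}_{[\sigma^i\phi^{hj}]}$: the ``$\phi$-exponent'' $j$ in the split case gets replaced by $hj$ because conjugation by $\varpi_D$ realizes $\phi^h$ rather than $\phi$, while the $\boldsymbol{\mu}_E$-action matches on the nose since \eqref{ad-zeta-on-varpi-D-i} and \eqref{explicit action of psi} are governed by the same exponent $q^{hj}-1$.

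Having matched the $\Psi_{E/F}$-characters occurring in $(\mathbf{k}_D)_j$ with the list $\{[\sigma^i\phi^{hj}]\}$, I would invoke the fact (used already in the split case) that a $\mathbf{k}_F\Psi_{E/F}$-module with $\Psi_{E/F}$ cyclic is determined by the multiset of characters appearing, hence conclude the claimed isomorphism \eqref{decomp-of-standard-module-div-alg}. A dimension check ($\#(\Gamma_E\backslash\Gamma_F/\Gamma_E)$ double cosets with $\phi$-exponent $\equiv hj$, each contributing $[E({}^gE):F]$, summing to $n=d^2$ when $A=D$, i.e. $m=1$) confirms nothing is missed. The main obstacle I expect is bookkeeping the twist by $z_{D/E}$ and the substitution $j\mapsto hj$ consistently: one must verify that the condition \eqref{norm of root of unity} defining $z_{D/E}$ is precisely what makes $(\varpi_D^f z_{D/E})^e=z_{E/F}\varpi_F$, so that the $\varpi_E$ used here agrees with the one from \eqref{e-th power of prime is also prime}, and that the resulting eigenvalue $z_e^i z_{\phi^{hj}}$ (rather than $z_e^i z_{\phi^j}$) is the correct normalization appearing in $\mathfrak{U}_{[\sigma^i\phi^{hj}]}$. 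Once this normalization is pinned down the rest is the routine eigenspace decomposition of a finite field under a Frobenius-semilinear torus action.
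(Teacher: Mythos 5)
Your proposal is correct and takes essentially the same route as the paper: the paper also works with representatives $u\varpi_D^{j}$, $u\in{\boldsymbol{\mu}}_{K_n}$, reads off the ${\boldsymbol{\mu}}_E$-action $z\mapsto z^{1-q^{hj}}$ from (\ref{ad-zeta-on-varpi-D-i}), and identifies $(\mathbf{k}_D)_j\cong\mathrm{Ind}_{{\boldsymbol{\mu}}_E/{\boldsymbol{\mu}}_F}^{\Psi_{E/F}}(\mathbf{k}_E)_j$ with $\varpi_E$-eigenvalues $(z_e^{i}z_{\phi^{hj}})^{-1}$, which is the same eigencharacter computation you carry out directly. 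Only your side remarks need adjusting — $\Psi_{E/F}$ is abelian of order prime to $p$ but not always cyclic, and the dimension tally is $\sum_{[g]} f\bigl(E({}^gE)/F\bigr)=n$ over the relevant double cosets rather than $[E({}^gE):F]$ summing to $d^2$ — but neither affects the argument.
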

\begin{proof}
Recall that, if we denote by $K$ the maximal unramified extension in $D$ (of degree $n$ over $F$), then
$$\mathfrak{P}^i_D=\cdots\oplus\mathfrak{p}_K\varpi^i_D\oplus \mathfrak{o}_K\varpi^{i+1}_D\oplus\cdots.$$
  Hence we can use ${{z}}\varpi_D^i$, with ${{z}}\in {\boldsymbol{\mu}}_K\cup\{0\}$, as a representative in $\mathfrak{P}^i_D$ of an element in $(\mathbf{k}_D)_j$. We regard $(\mathbf{k}_D)_j$ as a $\mathbf{k}_E$-vector space of dimension $e$
  such that ${{z}}\in {\boldsymbol{\mu}}_E$ acts on each piece $(\mathbf{k}_E)_j$ by the character $\left[\begin{smallmatrix}
  1\\ \phi^{hj}
\end{smallmatrix}\right]({{z}})={{z}}^{1-q^{hj}}$ as in (\ref{ad-zeta-on-varpi-D-i}). Therefore, a Frobenius reciprocity argument (which is still valid when $p$ does not divide $\#\Psi_{E/F}$) implies that
$$(\mathbf{k}_D)_j\cong {\mathrm{Ind}}_{{\boldsymbol{\mu}}_E/{\boldsymbol{\mu}}_F}^{\Psi_{E/F}}(\mathbf{k}_E)_j;$$
 more precisely, the action of $\varpi_E$ has eigenvalues $({{z}}_{e}^i{{z}}_{\phi^{hj}})^{-1}$, $i=0,\dots,e-1$, where we recall that ${{z}}_{\phi^{hj}}$ is an $e$th root of ${{z}}^{q^{hj}-1}_{E/F}$. Hence we have the decomposition (\ref{decomp-of-standard-module-div-alg}) with the $\Psi_{E/F}$-action on each component
$ \mathfrak{U}_{[\sigma^i\phi^{hj}]}$ as in (\ref{explicit action of psi}) for each fixed $j$.
\end{proof}

For the general $A$, if we write $(\mathrm{Mat}_s(\mathbf{k}_D)^r)_{j'}:=\mathfrak{P_A}^{j'}/\mathfrak{P_A}^{{j'}+1} $ for each $j'\in \mathbb{Z}$, then we have the following result.
\begin{prop}\label{decomp-of-standard-module-sim-alg}
 For each $j'\in \mathbb{Z}$, we have a decomposition
  $$(\mathrm{Mat}_s(\mathbf{k}_D)^r)_{j'}\cong \bigoplus_{
  \begin{smallmatrix}
    [\sigma^i\phi^{j}]\in \Gamma_E\backslash \Gamma_F/\Gamma_E
    \\
     j\equiv hj' \mod f/s
  \end{smallmatrix}
  } \mathfrak{U}_{[\sigma^i\phi^{j}]}.$$
  as a $\Psi_{E/F}$-module. (Recall that $f/s=d/\gcd(d,e)=e(\mathfrak{A}/\mathfrak{o}_E)$.)
  \end{prop}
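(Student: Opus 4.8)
The plan is to reduce the general case $A=\mathrm{Mat}_m(D)$ to the division algebra case already handled in Proposition \ref{prop decomp-of-standard-module-div-alg}, using the structure of $\mathfrak{A}$ as a principal hereditary order together with the isomorphism (\ref{A mod PA}). First I would fix an explicit description of $\mathfrak{A}$ and $\mathfrak{P}_\mathfrak{A}$ adapted to the embedded field $E$. Since $\mathfrak{A}$ is principal with $\mathfrak{o}_D$-period $r=e/\gcd(d,e)$, we may choose a uniformizer $\varpi_\mathfrak{A}$ normalizing $\mathfrak{A}^\times$ with $\varpi_\mathfrak{A}\mathfrak{A}=\mathfrak{P}_\mathfrak{A}$, and arrange that $\varpi_E$ acts via a product of a power of $\varpi_\mathfrak{A}$ and an element of $\mathfrak{A}_0^\times$; concretely one can take $\varpi_\mathfrak{A}$ so that $\varpi_D$ corresponds to $\varpi_\mathfrak{A}^r$ inside $\mathfrak{A}$. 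Then each graded piece $(\mathrm{Mat}_s(\mathbf{k}_D)^r)_{j'}=\mathfrak{P}_\mathfrak{A}^{j'}/\mathfrak{P}_\mathfrak{A}^{j'+1}$ is a free rank-one module over $\mathbf{k}_D$ twisted by the conjugation action of $\varpi_\mathfrak{A}^{j'}$, analogous to the decomposition $\mathfrak{P}^i_D=\cdots\oplus\mathfrak{p}_K\varpi_D^i\oplus\mathfrak{o}_K\varpi_D^{i+1}\oplus\cdots$ used in the previous proof.

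Next I would compute the $\Psi_{E/F}$-action on $(\mathrm{Mat}_s(\mathbf{k}_D)^r)_{j'}$. The action of ${\boldsymbol{\mu}}_E$ is controlled by conjugation of $\varpi_\mathfrak{A}^{j'}$, which, because $\varpi_E=\varpi_D^f z_{D/E}$ with $f=f(E/F)$ and $\varpi_D=\varpi_\mathfrak{A}^r$ up to units, twists the characters $\left[\begin{smallmatrix}1\\ \phi^{j}\end{smallmatrix}\right]$ by shifting $j$ by $h j'$ modulo $f/s=e(\mathfrak{A}/\mathfrak{o}_E)$; this is the source of the congruence condition $j\equiv hj'\pmod{f/s}$ in the statement. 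Here $s=\gcd(f,m)$ governs how the residue field $\mathbf{k}_D$ (of degree $d$ over $\mathbf{k}_F$) splits up under the further $\mathbf{k}_E$-structure, exactly as in Proposition \ref{prop decomp-of-standard-module-div-alg} one wrote $(\mathbf{k}_D)_j\cong \mathrm{Ind}_{{\boldsymbol{\mu}}_E/{\boldsymbol{\mu}}_F}^{\Psi_{E/F}}(\mathbf{k}_E)_j$. A Frobenius reciprocity argument (valid since $p\nmid\#\Psi_{E/F}$) then identifies $(\mathrm{Mat}_s(\mathbf{k}_D)^r)_{j'}$ with an induced module from ${\boldsymbol{\mu}}_E/{\boldsymbol{\mu}}_F$, whose decomposition into characters $\left[\begin{smallmatrix}1\\ \sigma^i\phi^j\end{smallmatrix}\right]$ with the prescribed constraint on $j$ and $i$ ranging over the $e$ eigenvalues of $\varpi_E$ yields the claimed $\bigoplus \mathfrak{U}_{[\sigma^i\phi^j]}$.

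The main obstacle I expect is bookkeeping the interplay between the three integers $r$, $s$, and the ramification index $e(\mathfrak{A}/\mathfrak{o}_E)=f/s$: one must verify that as $j'$ runs over a complete set of residues mod $r$ and $j$ runs over residues mod $f$ subject to $j\equiv hj'\pmod{f/s}$, the indices $[\sigma^i\phi^j]$ produced exhaust the correct double cosets with the right multiplicities, and that the coprimality $\gcd(h,d)=1$ (Hasse invariant condition) makes the map $j'\mapsto hj'$ a bijection on the relevant cyclic group. The rest is a direct adaptation of the division algebra computation: expressing a representative of $\mathfrak{P}_\mathfrak{A}^{j'}/\mathfrak{P}_\mathfrak{A}^{j'+1}$ in the form $z\varpi_\mathfrak{A}^{j'}$ with $z$ running over (a lift of) $\mathbf{k}_D$, reading off the $\varpi_E$-eigenvalues as $(z_e^i z_{\phi^j})^{-1}$ for $i=0,\dots,e-1$ and the fixed $j\equiv hj'$, and invoking Proposition \ref{hyper and aniso and properties}(\ref{symplectic determined by underlying module}) so that only the underlying module structure needs to be matched. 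I would also remark that the totally ramified case $f=1$ forces $s=r=1$ and recovers the identification used in Proposition \ref{totally ram AV equals MV}, which serves as a consistency check.
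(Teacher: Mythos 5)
Your overall architecture (first determine the ${\boldsymbol{\mu}}_E$-isotypic content of the graded piece, then obtain the $\varpi_E$-eigenvalues by inducing from ${\boldsymbol{\mu}}_E/{\boldsymbol{\mu}}_F$ to $\Psi_{E/F}$ via Frobenius reciprocity) agrees with the endgame of the paper's argument, but the step that actually produces the congruence $j\equiv hj'\pmod{f/s}$ is asserted rather than derived, and the mechanism you offer for it fails in this generality. The relation $\varpi_E=\varpi_D^f{z}_{D/E}$ belongs to Section \ref{Conjugate relations in a division algebra}, where $E$ is a maximal subfield of a division algebra of dimension $n^2$; in the present setting $E$, of degree $n=md$, does not embed in $D$, of degree $d^2$, unless $m=1$, and the relation is numerically impossible inside $\mathfrak{A}$: if $\varpi_D$ is a unit multiple of $\varpi_{\mathfrak{A}}^{r}$ then $v_\mathfrak{A}(\varpi_D^f)=rf$, whereas $v_\mathfrak{A}(\varpi_E)=e(\mathfrak{A}/\mathfrak{o}_E)=f/s$, and $rf=f/s$ forces $r=s=1$. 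Thus "shifting $j$ by $hj'$ modulo $f/s$" is exactly the content of the proposition, and your sketch does not supply an argument for it; at minimum you would need to choose $\varpi_\mathfrak{A}$ normalizing the image of ${\boldsymbol{\mu}}_E$, determine how its conjugation acts on that image modulo $\mathfrak{P}_\mathfrak{A}$, and compute the twisted action $y\mapsto(\varpi_\mathfrak{A}^{-j'}z\varpi_\mathfrak{A}^{j'})\,y\,z^{-1}$ on $\mathfrak{A}/\mathfrak{P}_\mathfrak{A}$ --- which is precisely the bookkeeping you defer.

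For comparison, the paper gets the congruence by a different reduction: it introduces $E_D=E\cap D$, applies Proposition \ref{prop decomp-of-standard-module-div-alg} to identify the ${\boldsymbol{\mu}}_{E_D}$-action on $(\mathrm{Mat}_s(\mathbf{k}_D)^r)_{j'}$ (the character $z\mapsto z^{1-q^{hj'}}$), and then uses the chain $\mathbf{k}_{E_D}\rightarrow\mathbf{k}_E\rightarrow\mathrm{Mat}_s(\mathbf{k}_{E_D})\rightarrow\mathrm{Mat}_s(\mathbf{k}_D)$, with $\mathbf{k}_E$ maximal in $\mathrm{Mat}_s(\mathbf{k}_{E_D})$: the twisted group-ring decomposition shows that the ${\boldsymbol{\mu}}_E$-characters occurring are exactly those $\phi^j$ lying over $\bar{\phi}^{hj'}$ under the projection $\Gamma_{\mathbf{k}_E/\mathbf{k}_F}\rightarrow\Gamma_{\mathbf{k}_{E_D}/\mathbf{k}_F}$, i.e. $j\equiv hj'\pmod{f/s}$, and only then induces up to $\Psi_{E/F}$. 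Two smaller points: your consistency check is off, since $f=1$ gives $s=1$ but $r=m$ rather than $r=1$ (the totally ramified comparison in Proposition \ref{totally ram AV equals MV} does not need $r=1$); and Proposition \ref{hyper and aniso and properties}(\ref{symplectic determined by underlying module}) plays no role here, because the present statement concerns only the underlying $\Psi_{E/F}$-module, not any symplectic structure.
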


\proof
Denote by $E_D=E\cap D$ the maximal subfield contained in both $E$ and $D$, then $[\mathbf{k}_{E_D}:\mathbf{k}_{F}]=f/s$ and $[\mathbf{k}_{D}:\mathbf{k}_{E_D}]=\gcd(d,e)=e/r$.
By Proposition \ref{prop decomp-of-standard-module-div-alg}, we know that ${{z}}\in {\boldsymbol{\mu}}_{E_D}$ acts on $(\mathrm{Mat}_s(\mathbf{k}_D)^r)_{j'}$ as a sum of $(\mathbf{k}_{E_D})_{hj'}$, i.e., ${{z}}$ acts by the character $\left[\begin{smallmatrix}
  1\\ \bar{\phi}^{hj'}
\end{smallmatrix}\right]({{z}})={{z}}^{1-q^{hj'}}$, where $\bar{\phi}$ is the image of $\phi$ under the natural projection
\begin{equation}\label{proj of finite Gal gp}
\Gamma_{\mathbf{k}_E/\mathbf{k}_F}\rightarrow \Gamma_{\mathbf{k}_{E_D}/\mathbf{k}_F}.
\end{equation}
(Note that the arguments above concerning Proposition \ref{prop decomp-of-standard-module-div-alg} still valid even though $E_D$ may not be a maximal subfield of $D$.) We now consider the $\mathbf{k}_{E_D}$-embeddings (where all choices are conjugate to each other)
$$\mathbf{k}_{E_D}\xrightarrow{}\mathbf{k}_E\xrightarrow{}\mathrm{Mat}_s(\mathbf{k}_{E_D})\xrightarrow{}\mathrm{Mat}_s(\mathbf{k}_D),$$
Notice that $\mathbf{k}_E$ is a maximal subfield of $\mathrm{Mat}_s(\mathbf{k}_{E_D})$. By the ``twisted group ring decomposition'', we know that ${{z}}\in {\boldsymbol{\mu}}_{E}$ acts on $\mathrm{Mat}_s(\mathbf{k}_{E_D})$ as a sum of $((\mathbf{k}_E)_{hj'})^{s}$, i.e., ${{z}}$ acts on each of the $s$ summands of $\mathbf{k}_E$ by the character $\left[\begin{smallmatrix}
  1\\ {\phi}^{j}
\end{smallmatrix}\right]({{z}})={{z}}^{1-q^{j}}$, for ${\phi}^{j}$ ranges over the $s$ pre-images of $\bar{\phi}^{hj'}$ under the natural projection (\ref{proj of finite Gal gp}). We denote this ${\boldsymbol{\mu}}_E$-module by $\mathrm{Mat}_s(\mathbf{k}_{E_D})_{j'}$. Finally, since the relative degree of $(\mathrm{Mat}_s(\mathbf{k}_D))^r$ over $\mathrm{Mat}_s(\mathbf{k}_{E_D})$ is $e=[\Psi_{E/F}:{\boldsymbol{\mu}}_E/{\boldsymbol{\mu}}_F]$, a Frobenius reciprocity argument (which is still valid when $p$ does not divide $\#\Psi_{E/F}$) implies that
$$(\mathrm{Mat}_s(\mathbf{k}_D)^r)_{j'}\cong {\mathrm{Ind}}_{{\boldsymbol{\mu}}_E/{\boldsymbol{\mu}}_F}^{\Psi_{E/F}}\mathrm{Mat}_s(\mathbf{k}_{E_D})_{j'};$$
Therefore, we have obtained the desired decomposition and proved the proposition.
\qed

The following Corollary is a direct consequence of Proposition \ref{decomp-of-standard-module-sim-alg}.
  \begin{cor}
    The graded algebra
  \begin{equation*}\label{sum of graded algebra}
 \mathfrak{U}_\mathfrak{A} :=\bigoplus_{j'=0}^{f/s-1}(\mathrm{Mat}_s(\mathbf{k}_D)^r)_{j'},
  \end{equation*}
  is isomorphic to $ \mathfrak{U}_\mathfrak{M} $  as a $\Psi_{E/F}$-module.
  \qed\end{cor}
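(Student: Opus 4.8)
The plan is to obtain the corollary by reading off Proposition \ref{decomp-of-standard-module-sim-alg} and keeping track of the grading index modulo $f/s$. The only arithmetic I would need is that $h$ is prime to $f/s$: since $f/s=d/\gcd(d,e)$ divides $d$ and $h=h(D)$ satisfies $\gcd(d,h)=1$, we get $\gcd(h,f/s)=1$, so multiplication by $h$ is a bijection of $\mathbb{Z}/(f/s)\mathbb{Z}$. Consequently, as $j'$ runs over $\{0,1,\dots,f/s-1\}$, the residue of $hj'$ modulo $f/s$ also runs over a complete set of residues modulo $f/s$.

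With this in hand, the first step is to sum the isomorphisms of Proposition \ref{decomp-of-standard-module-sim-alg}. For each $j'$ that proposition provides a $\Psi_{E/F}$-module isomorphism
$$(\mathrm{Mat}_s(\mathbf{k}_D)^r)_{j'}\;\cong\;\bigoplus_{\substack{[\sigma^i\phi^{j}]\in\Gamma_E\backslash\Gamma_F/\Gamma_E\\ j\equiv hj' \mod f/s}}\mathfrak{U}_{[\sigma^i\phi^{j}]}.$$
Because the exponent $j$ attached to a double coset $[\sigma^i\phi^j]$ is well defined modulo $f$, hence modulo $f/s$, the bijectivity just observed shows that each double coset $[\sigma^i\phi^j]$ satisfies the congruence $j\equiv hj'\mod f/s$ for exactly one $j'\in\{0,\dots,f/s-1\}$. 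Summing the displayed isomorphisms over $j'=0,\dots,f/s-1$ therefore gives
$$\mathfrak{U}_\mathfrak{A}=\bigoplus_{j'=0}^{f/s-1}(\mathrm{Mat}_s(\mathbf{k}_D)^r)_{j'}\;\cong\;\bigoplus_{[\sigma^i\phi^j]\in\Gamma_E\backslash\Gamma_F/\Gamma_E}\mathfrak{U}_{[\sigma^i\phi^j]},$$
with every double coset contributing exactly one summand and none omitted.

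The second step is to recognize the right-hand side as $\mathfrak{U}_\mathfrak{M}$. By the root-space decomposition (\ref{decomp-of-standard-module-split-case}), together with the identification of $\Gamma_F$-orbits of roots with nontrivial double cosets from Proposition \ref{orbit of roots as double coset}, the module $\mathfrak{U}_\mathfrak{M}$ is precisely $\bigoplus_{[\sigma^i\phi^j]\in\Gamma_E\backslash\Gamma_F/\Gamma_E}\mathfrak{U}_{[\sigma^i\phi^j]}$; moreover the $\Psi_{E/F}$-action on each summand $\mathfrak{U}_{[\sigma^i\phi^j]}$ is given by the same formula (\ref{explicit action of psi}), which depends only on the pair $(i,j)$ and not on the ambient hereditary order. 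Hence the isomorphism displayed above is one of $\Psi_{E/F}$-modules (only the module structure is asserted, not the graded-algebra structure), which is the claim.

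I do not anticipate a genuine obstacle: the substance lies entirely in Proposition \ref{decomp-of-standard-module-sim-alg}, and the corollary merely regroups its pieces. The single point requiring care is that the summands indexed by distinct values of $j'$ neither overlap nor together miss any double coset, which is exactly what the coprimality $\gcd(h,f/s)=1$ secures; a comparison of $\mathbf{k}_F$-dimensions on the two sides provides a convenient independent check.
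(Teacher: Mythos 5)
Your proof is correct and follows the paper's intended route: the paper states the corollary as a direct consequence of Proposition \ref{decomp-of-standard-module-sim-alg}, and you simply make explicit the regrouping of double cosets over $j'=0,\dots,f/s-1$, with the coprimality $\gcd(h,f/s)=1$ (from $f/s\mid d$ and $\gcd(d,h)=1$) guaranteeing that each congruence class, hence each coset $[\sigma^i\phi^j]$, occurs exactly once. No essential difference from the paper's argument.
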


We provide some notations for later use. We write
$$\mathfrak{U}_{\mathrm{sym}} :=\bigoplus_{[g]\in ( \Gamma_{E}\backslash \Gamma_{F}/\Gamma_{E})_{\mathrm{sym}}}\mathfrak{U}_{[g]}$$
and also $\mathfrak{U}_{{\mathrm{sym-ram}}}$ and $\mathfrak{U}_{{\mathrm{sym-unram}}}$ analogously. Given intermediate extensions $F\subseteq K\subseteq L\subseteq E$, we write
$$\mathfrak{U}_{K/L} :=\bigoplus_{[g]\in  \Gamma_{E}\backslash (\Gamma_{L}-\Gamma_{K})/\Gamma_{E}}\mathfrak{U}_{[g]}$$
We also define the \emph{symmetric module} associated to $\mathfrak{U}_{[g]}$ (or $\mathfrak{U}_{[g^{-1}]}$) by
\begin{equation}\label{def sym mod}
\boldsymbol{\mathfrak{U}}_{[g]}:=\begin{cases}
  \mathfrak{U}_{[g]}\oplus \mathfrak{U}_{[g^{-1}]}&\text{ if }[g]\text{ is asymmetric, }
  \\
  \mathfrak{U}_{[g]}&\text{ if }[g]\text{ is symmetric, }
\end{cases}
\end{equation}
and call
\begin{equation}\label{complete decomposition}
\mathfrak{U}_{\mathfrak{A}} \cong \mathfrak{U}_{0} \bigoplus_{[g]\in (\Gamma_E\backslash \Gamma_F / \Gamma_E)_{\mathrm{sym}}\sqcup (\Gamma_E\backslash \Gamma_F / \Gamma_E)_{\mathrm{asym}/\pm}}\boldsymbol{\mathfrak{U}}_{[g]}
\end{equation}
the \emph{complete symmetric decomposition} of $\mathfrak{U}_{\mathfrak{A}}$. If $\mathfrak{V}$ is a submodule of $\mathfrak{U}_{\mathfrak{A}}$, we also use the same convention to denote its submodules, for example, $\mathfrak{V}_{K/L}=\mathfrak{U}_{K/L}\cap \mathfrak{V}$ and $\boldsymbol{\mathfrak{V}}_{[g]}=\boldsymbol{\mathfrak{U}}_{[g]}\cap \mathfrak{V}$, and also call
$$
\mathfrak{V} \cong (\mathfrak{U}_{0}\cap \mathfrak{V} )\bigoplus_{[g]\in (\Gamma_E\backslash \Gamma_F / \Gamma_E)_{\mathrm{sym}}\sqcup (\Gamma_E\backslash \Gamma_F / \Gamma_E)_{\mathrm{asym}/\pm}}\boldsymbol{\mathfrak{V}}_{[g]}
$$
the {complete symmetric decomposition} of $\mathfrak{V}$

\subsection{Complete decomposition of finite symplectic modules}\label{section Complete decomposition of finite symplectic modules}

We are interested in the adjoint action of $E^\times$ on ${}_A\mathfrak{V}_{\xi}$ restricted from that of $\mathbf{J}$, which factors through the finite group $E^\times/F^\times(E^\times\cap  J^1) \cong \Psi_{E/F}$. We also know that this action preserves the symplectic structure $h_\theta$ (\ref{alternating form assoc to simple character}) on ${}_A\mathfrak{V}_{\xi}$. Hence ${}_A\mathfrak{V}_{\xi}$ is moreover a finite symplectic $\mathbb{F}_p\Gamma$-module for each cyclic subgroup $\Gamma$ of $\Psi_{E/F}$. We denote the $\mathfrak{U}_{[g]}$-isotypic component in ${}_A\mathfrak{V}_{\xi}$ by ${}_A\mathfrak{V}_{\xi,[g]}$, and obtain the decompositions
\begin{equation}\label{complete decomposition-V}
  {}_A\mathfrak{V}_{\xi}=\bigoplus_{[g]\in (\Gamma_E\backslash \Gamma_F / \Gamma_E)'}{}_A\mathfrak{V}_{\xi,[g]}=\bigoplus_{[g]\in (\Gamma_E\backslash \Gamma_F / \Gamma_E)_{\mathrm{sym}}\sqcup (\Gamma_E\backslash \Gamma_F / \Gamma_E)_{\mathrm{asym}/\pm}}{}_A\boldsymbol{\mathfrak{V}}_{\xi,[g]}
\end{equation}
 inherited from (\ref{decomp-of-standard-module-split-case}) and (\ref{complete decomposition}) respectively. These decomposition are finer than the one in (\ref{coarse decomp of V}). Indeed, it is easy to see that
 \begin{equation*}
  {}_A\mathfrak{V}_{\xi,k}:={}_A\mathfrak{V}_{\xi,E_k/E_{k+1}}=\bigoplus_{[g]\in \Gamma_E\backslash (\Gamma_{E_{k+1}}-\Gamma_{E_k}) / \Gamma_E}{}_A\mathfrak{V}_{\xi,[g]}
\end{equation*}
for $k=0,\dots,t$.

\begin{prop}\label{complete decomp orthogonal}
The complete symmetric decomposition of ${}_A\mathfrak{V}_{\xi}$ is orthogonal with respect to the alternating form $h_\theta$.
\end{prop}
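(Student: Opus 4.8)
The plan is to reduce the orthogonality of the complete symmetric decomposition to the orthogonality of root spaces in the standard module, combined with the already-established orthogonality of the coarse decomposition (Proposition~\ref{decomp-of-standard-module-sim-alg} and the citation to \cite[Proposition 3.9]{Secherre1}). The point is that each summand ${}_A\mathfrak{V}_{\xi,[g]}$ sits inside a single coarse piece ${}_A\mathfrak{V}_{\xi,k}$, so that I only need to check orthogonality \emph{within} each ${}_A\mathfrak{V}_{\xi,k}$; and there the argument is a purely formal one about weight spaces for the action of the abelian group $\Psi_{E/F}$ (equivalently, ${\boldsymbol{\mu}}_E$ together with the image of $\varpi_E$) on a symplectic module.

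**First I would** record the weight-space observation: on each coarse piece ${}_A\mathfrak{V}_{\xi,k}\cong{}_A\mathfrak{V}_{\xi,E_k/E_{k+1}}$ the finite abelian group $\Psi_{E/F}$ acts, decomposing it into the isotypic components ${}_A\mathfrak{V}_{\xi,[g]}$ indexed by $[g]\in\Gamma_E\backslash(\Gamma_{E_{k+1}}-\Gamma_{E_k})/\Gamma_E$, with $\Psi_{E/F}$ acting on ${}_A\mathfrak{V}_{\xi,[g]}$ through the character attached to $[g]$ as in \eqref{explicit action of psi}. Since $h_\theta$ is $\Psi_{E/F}$-invariant, for $v\in{}_A\mathfrak{V}_{\xi,[g]}$ and $w\in{}_A\mathfrak{V}_{\xi,[g']}$ one has $h_\theta(v,w)=h_\theta({}^\gamma v,{}^\gamma w)=\lambda_{[g]}(\gamma)\lambda_{[g']}(\gamma)\,h_\theta(v,w)$ for all $\gamma$; hence $h_\theta(v,w)=0$ unless $\lambda_{[g]}\lambda_{[g']}=1$, i.e.\ unless $[g']=[g^{-1}]$. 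This gives at once that ${}_A\mathfrak{V}_{\xi,[g]}\perp{}_A\mathfrak{V}_{\xi,[g']}$ whenever $[g']\notin\{[g],[g^{-1}]\}$, and it also shows ${}_A\mathfrak{V}_{\xi,[g]}$ is totally isotropic when $[g]$ is asymmetric (since then $\lambda_{[g]}^2\neq1$). Consequently the symmetric block ${}_A\boldsymbol{\mathfrak{V}}_{\xi,[g]}$ — which is ${}_A\mathfrak{V}_{\xi,[g]}\oplus{}_A\mathfrak{V}_{\xi,[g^{-1}]}$ in the asymmetric case and ${}_A\mathfrak{V}_{\xi,[g]}$ in the symmetric case — is orthogonal to every other symmetric block, and $h_\theta$ restricts to a non-degenerate form on each block because $h_\theta$ is non-degenerate on the whole space and the blocks are mutually orthogonal.

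**The one genuine point to nail down** is that distinct blocks indexed by double cosets $[g]\ne[g'^{\pm1}]$ do \emph{not} both meet the same eigenvalue constraint in a way that forces nonzero pairing — that is, that the characters $\lambda_{[g]}$ for $[g]$ ranging over a coarse piece are genuinely distinct and that $\lambda_{[g]}=\lambda_{[g']}^{-1}$ really does imply $[g']=[g^{-1}]$ as double cosets. This follows from the explicit formulas \eqref{explicit action of psi}: the restriction to ${\boldsymbol{\mu}}_E$ records $j\bmod f$ (hence the residual Frobenius component, which is inverted under $\lambda\mapsto\lambda^{-1}$, matching $\phi^j\leftrightarrow\phi^{-j}$), and the $\varpi_E$-eigenvalue then pins down $i$; together these recover $[\sigma^i\phi^j]$ from its character up to the identification $[g]\leftrightarrow[g^{-1}]$. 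I would also note the harmless subtlety that $\Psi_{E/F}$ need only be used through a cyclic subgroup for the symplectic-module language of Section~\ref{section Invariants of finite symplectic modules}, but here we genuinely use the full abelian group to separate weights, which is fine since invariance of $h_\theta$ holds for all of $\mathbf{J}$ and a fortiori for $\Psi_{E/F}$.

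**Putting it together**, the proof is: (1)~by Proposition~\ref{decomp-of-standard-module-sim-alg} and \cite[Proposition 3.9]{Secherre1} the coarse pieces ${}_A\mathfrak{V}_{\xi,k}$ are mutually orthogonal, so it suffices to prove orthogonality of the blocks lying in a fixed ${}_A\mathfrak{V}_{\xi,k}$; (2)~by $\Psi_{E/F}$-invariance of $h_\theta$ and the eigenvalue computation above, distinct symmetric blocks inside ${}_A\mathfrak{V}_{\xi,k}$ are orthogonal, and each symmetric block carries a non-degenerate restriction of $h_\theta$. I do not expect any serious obstacle; the only care needed is the bookkeeping identifying characters of $\Psi_{E/F}$ with double cosets via \eqref{explicit action of psi} and checking that the involution $\lambda\mapsto\lambda^{-1}$ corresponds to $[g]\mapsto[g^{-1}]$, which is exactly the symmetry encoded in the definition \eqref{def sym mod} of the symmetric modules.
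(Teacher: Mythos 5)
Your proposal is correct and is essentially the paper's own argument: the paper simply reduces to the split case by observing that the $\Psi_{E/F}$-isotypic components of ${}_A\mathfrak{V}_{\xi}$ are those of the standard module $\mathfrak{U}_{\mathfrak{A}}\cong\mathfrak{U}_{\mathfrak{M}}$ and then cites \cite[(5.10)]{thesis}, which rests on \cite[(8.2.3),(8.2.4)]{BF} --- precisely the invariance-of-$h_\theta$/eigencharacter computation you write out, with the pairing of isotypic pieces vanishing unless the characters are inverse, i.e.\ unless $[g']=[g^{-1}]$. Your preliminary reduction to the coarse pieces via \cite[Proposition 3.9]{Secherre1} is harmless but not needed, since the weight-space argument applies directly to all of ${}_A\mathfrak{V}_{\xi}$.
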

\begin{proof}
Since we know that the $\Psi_{E/F}$-components of ${}_A\mathfrak{V}_{\xi}$ consist of those in the standard module $\mathfrak{U}_\mathfrak{A}$, which is isomorphic to the standard one $\mathfrak{U}_\mathfrak{M}$ in the split case, the proof of the assertion is just analogous to the one in the split case \cite[(5.10)]{thesis}, based on the argument of \cite[(8.2.3),(8.2.4)]{BF}.
\end{proof}

We would like to describe the isotypic component appearing in the complete decomposition (\ref{complete decomposition-V}) of ${}_A\mathfrak{V}_{\xi}$. We first write $e({\mathfrak{A}}/{\mathfrak{A}_{k+1}}):= e({\mathfrak{A}}/{\mathfrak{o}_{E}})/e({\mathfrak{A}_{k+1}}/{\mathfrak{o}_{E}})$.

\begin{prop}\label{non-zero quotient}
 The quotient $\mathfrak{P}_\mathfrak{A}^j\cap A_{k+1}/\mathfrak{P}_\mathfrak{A}^{j+}\cap A_{k+1}$ is non-trivial if and only if $j\in e({\mathfrak{A}}/{\mathfrak{A}_{k+1}})\mathbb{Z}$.
\end{prop}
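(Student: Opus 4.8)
The plan is to unwind the definitions of the hereditary orders $\mathfrak{A}$ and $\mathfrak{A}_{k+1}$ and reduce the statement to an elementary computation of ramification indices. Recall that $\mathfrak{A}_{k+1} = \mathfrak{A} \cap A_{k+1}$, where $A_{k+1} = Z_A(E_{k+1})$, and that the valuation $v_{\mathfrak{A}}$ restricted to the center $E_{k+1}^\times$ of $A_{k+1}$ is $e(\mathfrak{A}/\mathfrak{o}_{E_{k+1}}) \cdot v_{E_{k+1}}$. The key structural fact I would invoke is the intersection formula (\ref{cano-cont-intersection}) from the embedding conditions of Section \ref{subsection An embedding condition}, suitably applied with the base field changed from $F$ to $E_{k+1}$: by the discussion there, $(E_0 E_{k+1}, \mathfrak{A}_{k+1})$ is again a canonical continuation satisfying (\ref{3-condition-E-pure})--(\ref{3-condition-cano-cont}), so $\mathfrak{P}_{\mathfrak{A}}^k \cap A_{k+1} = \mathfrak{P}_{\mathfrak{A}_{k+1}}^{k/e(\mathfrak{A}/\mathfrak{A}_{k+1})}$ for every $k \in \mathbb{Z}$, with the right side read as $\mathfrak{P}_{\mathfrak{A}_{k+1}}^{\lceil k/e(\mathfrak{A}/\mathfrak{A}_{k+1}) \rceil}$.

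With this in hand, the argument proceeds as follows. First I would observe that $\mathfrak{P}_{\mathfrak{A}}^j \cap A_{k+1} = \mathfrak{P}_{\mathfrak{A}_{k+1}}^{\lceil j/e(\mathfrak{A}/\mathfrak{A}_{k+1})\rceil}$, and similarly $\mathfrak{P}_{\mathfrak{A}}^{j+} \cap A_{k+1} = \bigcup_{i > j} \mathfrak{P}_{\mathfrak{A}}^i \cap A_{k+1} = \mathfrak{P}_{\mathfrak{A}_{k+1}}^{\lceil (j+1)/e(\mathfrak{A}/\mathfrak{A}_{k+1})\rceil}$ (using that the ceiling function is non-decreasing and eventually strictly larger). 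Therefore the quotient in question is $\mathfrak{P}_{\mathfrak{A}_{k+1}}^{a}/\mathfrak{P}_{\mathfrak{A}_{k+1}}^{b}$ with $a = \lceil j/e(\mathfrak{A}/\mathfrak{A}_{k+1})\rceil$ and $b = \lceil (j+1)/e(\mathfrak{A}/\mathfrak{A}_{k+1})\rceil$. This is non-trivial precisely when $b > a$, i.e. when there is an integer strictly between $j/e(\mathfrak{A}/\mathfrak{A}_{k+1})$ and $(j+1)/e(\mathfrak{A}/\mathfrak{A}_{k+1})$ (inclusive on the right). Since consecutive integers $j, j+1$ differ by $1$, this happens if and only if $e(\mathfrak{A}/\mathfrak{A}_{k+1})$ divides $j$, i.e. $j \in e(\mathfrak{A}/\mathfrak{A}_{k+1})\mathbb{Z}$, which is the claim. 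In that case the quotient is a one-dimensional $\mathbf{k}_{D_{E_{k+1}}}$-space, recovering the graded pieces of $\mathfrak{A}_{k+1}$ that enter the decomposition of Proposition \ref{decomp-of-standard-module-sim-alg}.

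The only subtlety — and the step I would treat most carefully — is the compatibility of the change-of-base-field version of (\ref{cano-cont-intersection}) with the principality of $\mathfrak{A}_{k+1}$, which guarantees that $v_{\mathfrak{A}_{k+1}}$ is well-defined on $\mathfrak{K}_{\mathfrak{A}_{k+1}}$ and that the integer $e(\mathfrak{A}/\mathfrak{A}_{k+1}) = e(\mathfrak{A}/\mathfrak{o}_E)/e(\mathfrak{A}_{k+1}/\mathfrak{o}_E)$ is genuinely a positive integer (not merely rational); this is exactly what (\ref{3-condition-cano-cont}) and the remark after it supply, since both $\mathfrak{A}$ and $\mathfrak{A}_{k+1}$ are principal. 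I expect no real obstacle beyond bookkeeping: once the intersection formula is available in the right generality, the statement is a one-line consequence of the behaviour of the ceiling function on consecutive integers.
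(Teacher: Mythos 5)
Your argument is correct and takes essentially the same route as the paper: the paper's proof likewise reduces everything to the intersection formula $\mathfrak{P}_\mathfrak{A}^j\cap A_{k+1}=\mathfrak{P}_{\mathfrak{A}_{k+1}}^{\lceil j/e(\mathfrak{A}/\mathfrak{A}_{k+1})\rceil}$ (the canonical-continuation condition for the pair $(\mathfrak{A},\mathfrak{A}_{k+1})$) and then notes that the exponent jumps precisely when $e(\mathfrak{A}/\mathfrak{A}_{k+1})$ divides $j$, which you spell out via the ceiling function. Only your closing aside is slightly off: when non-trivial, the quotient is $\mathrm{Mat}_{s_{k+1}}(\mathbf{k}_{D_{k+1}})^{r_{k+1}}$ rather than a one-dimensional $\mathbf{k}_{D_{k+1}}$-space, but this plays no role in the argument.
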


\begin{proof}
 Since $\mathfrak{P}_\mathfrak{A}^j\cap A_{k+1}=\mathfrak{P}_{\mathfrak{A}_{k+1}}^{ j/e({\mathfrak{A}}/{\mathfrak{A}_{k+1}})}$
for all $j\in \mathbb{Z}$, the assertion follows directly.
\end{proof}

We now specify $j=j_k=e({\mathfrak{A}}/{\mathfrak{o}_{E}})a_k/2$ for some integer $a_k$, and so $\mathfrak{P}_\mathfrak{A}^{j_k}\cap A_{k+1}=\mathfrak{P}_{\mathfrak{A}_{k+1}}^{a_ke({\mathfrak{A}}_{k+1}/{\mathfrak{o}_{E}})/2}$, such that the index on the right side is the one appearing in the group $J^1$ (\ref{group H(xi) J(xi)}). The condition in Proposition \ref{non-zero quotient} is satisfied if and only if $a_k$ is even or $e({\mathfrak{A}_{k+1}}/{\mathfrak{o}_{E}})$ is even, in which case
\begin{equation*}
\begin{split}
{}_A\mathfrak{V}_{\xi,k}
&\cong\frac{\mathfrak{P}_{\mathfrak{A}_{k+1}}^{a_ke(\mathfrak{A}_{k+1}/\mathfrak{o}_E)/2}}{\mathfrak{P}^{{a_ke(\mathfrak{A}_{k+1}/\mathfrak{o}_E)/2}+}_{\mathfrak{A}_{k+1}} +\mathfrak{P}^{{a_ke(\mathfrak{A}_{k}/\mathfrak{o}_E)/2}}_{ \mathfrak{A}_{k}}}
\\
&\cong\begin{cases}
  \mathrm{Mat}_{s_{k+1}}(\mathbf{k}_{D_{{k+1}}})^{r_{k+1}}/\mathrm{Mat}_{s_{k}}(\mathbf{k}_{D_{{k}}})^{r_{k}} & \text{when }a_ke(\mathfrak{A}_{k}/\mathfrak{o}_E)/2\in\mathbb{Z},
   \\
  \mathrm{Mat}_{s_{k+1}}(\mathbf{k}_{D_{{k+1}}})^{r_{k+1}} &\text{otherwise},
\end{cases}
\end{split}
\end{equation*}
where $r_k$ and $s_k$ are the invariants of $\mathfrak{A}_k$ analogous to $r$ and $s$ of $\mathfrak{A}$.

To summarize,
\begin{equation}\label{sim alg jump module}
{}_A\mathfrak{V}_{\xi,k}\cong\begin{cases}
0& \text{when }a_k\text{ is odd and }e({\mathfrak{A}}_{k+1}/{\mathfrak{o}_{E}})\text{ is odd,}
\\
 \mathrm{Mat}_{s_{k+1}}(\mathbf{k}_{D_{{k+1}}})^{r_{k+1}}  &\text{when }a_k\text{ is odd, }e({\mathfrak{A}}_{k}/{\mathfrak{o}_{E}})\text{ is odd, and }e({\mathfrak{A}}_{k+1}/{\mathfrak{o}_{E}})\text{ is even},
\\
\mathrm{Mat}_{s_{k+1}}(\mathbf{k}_{D_{{k+1}}})^{r_{k+1}}/\mathrm{Mat}_{s_{k}}(\mathbf{k}_{D_{{k}}})^{r_{k}} &\text{when } a_k\text{ is even or }e({\mathfrak{A}}_{k}/{\mathfrak{o}_{E}})\text{ is even}.
\end{cases}
\end{equation}

The action of $\Psi_{E/F}$ on $\mathfrak{P}_\mathfrak{A}^{j_k} \cap A_{k+1}$ is given by $\sigma^i\phi^{jh}\in \Gamma_{E_{k+1}}$, where $j$ has image $h{j_k}$ in the natural projection $\mathbb{Z}/f\mathbb{Z}\rightarrow \mathbb{Z}/e({\mathfrak{A}}/{\mathfrak{o}_{E}})\mathbb{Z}$. Therefore, directly from (\ref{sim alg jump module}), we have the following decompositions.
\begin{prop}\label{describe complete decomp}
  The complete decomposition of the component ${}_A\mathfrak{V}_{\xi,k}$ is given by
  \begin{enumerate}[(i)]
   \item When  $a_k$ is odd and $e({\mathfrak{A}}_{k+1}/{\mathfrak{o}_{E}})$ is odd, then ${}_A\mathfrak{V}_{\xi,k}$ is trivial.
   \item When $a_k$ is odd, $e({\mathfrak{A}}_{k}/{\mathfrak{o}_{E}})$ is odd, and $e({\mathfrak{A}}_{k+1}/{\mathfrak{o}_{E}})$ is even, then
       $${}_A\mathfrak{V}_{\xi,k}\cong
       \bigoplus_{\begin{smallmatrix}
         [g]=[\sigma^i\phi^{j}]\in \Gamma_E\backslash \Gamma_{E_{k+1}} / \Gamma_E
         \\
         j\equiv hj_k \mod e({\mathfrak{A}}_{}/{\mathfrak{o}_{E}})
       \end{smallmatrix}}\mathfrak{U}_{[g]}.$$ \label{describe complete decomp odd-even}
       \item When $a_k$ is even or  $e({\mathfrak{A}}_{k}/{\mathfrak{o}_{E}})$ is even, then
       $${}_A\mathfrak{V}_{\xi,k}\cong
       \bigoplus_{\begin{smallmatrix}
         [g]=[\sigma^i\phi^{j}]\in \Gamma_E\backslash( \Gamma_{E_{k+1}}- \Gamma_{E_{k}}) / \Gamma_E
         \\
         j\equiv hj_k \mod e({\mathfrak{A}}_{}/{\mathfrak{o}_{E}})
       \end{smallmatrix}}\mathfrak{U}_{[g]}.$$
  \end{enumerate}
\qed\end{prop}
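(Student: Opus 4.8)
The plan is to read off the three cases directly from the trichotomy already recorded in (\ref{sim alg jump module}), refining each surviving graded piece by the $\Psi_{E/F}$-module decomposition of Proposition \ref{decomp-of-standard-module-sim-alg}, applied not to $\mathfrak{A}$ in $A$ but to the centralizer orders $\mathfrak{A}_k$ and $\mathfrak{A}_{k+1}$ in $A_k$ and $A_{k+1}$. This is legitimate because, as noted in Section \ref{subsection An embedding condition}, the hypotheses (\ref{3-condition-E-pure})--(\ref{3-condition-cano-cont}) persist when the base field $F$ is replaced by any intermediate subfield of $E/F$, in particular by $E_k$ or $E_{k+1}$, so that Proposition \ref{decomp-of-standard-module-sim-alg} is available over those bases.

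For (i), when $a_k$ and $e(\mathfrak{A}_{k+1}/\mathfrak{o}_E)$ are both odd, the index $j_k=e(\mathfrak{A}/\mathfrak{o}_E)a_k/2$ does not lie in $e(\mathfrak{A}/\mathfrak{A}_{k+1})\mathbb{Z}$, so by Proposition \ref{non-zero quotient} the quotient defining ${}_A\mathfrak{V}_{\xi,k}$ is trivial; nothing further is needed. For (ii), (\ref{sim alg jump module}) identifies ${}_A\mathfrak{V}_{\xi,k}$ with the single graded piece $(\mathrm{Mat}_{s_{k+1}}(\mathbf{k}_{D_{k+1}})^{r_{k+1}})_{j_k}$ of $\mathfrak{A}_{k+1}$. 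Applying Proposition \ref{decomp-of-standard-module-sim-alg} over the base $E_{k+1}$ writes this as $\bigoplus \mathfrak{U}_{[\sigma^i\phi^j]}$ with $[\sigma^i\phi^j]$ running over $\Gamma_E\backslash\Gamma_{E_{k+1}}/\Gamma_E$ and the congruence on $j$ determined by the grading degree. The one bookkeeping point is to verify that this congruence is exactly $j\equiv hj_k \pmod{e(\mathfrak{A}/\mathfrak{o}_E)}$: this is precisely what the paragraph preceding the proposition records, namely that $\Psi_{E/F}$ acts on $\mathfrak{P}_\mathfrak{A}^{j_k}\cap A_{k+1}$ through $\sigma^i\phi^{jh}$ with $j$ the image of $hj_k$ under $\mathbb{Z}/f\mathbb{Z}\to\mathbb{Z}/e(\mathfrak{A}/\mathfrak{o}_E)\mathbb{Z}$, which in turn comes from (\ref{ad-zeta-on-varpi-D-i}) governing how $\boldsymbol{\mu}_E$ conjugates powers of $\varpi_D$ inside $D$.

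For (iii), (\ref{sim alg jump module}) presents ${}_A\mathfrak{V}_{\xi,k}$ as the quotient of the degree-$j_k$ graded piece of $\mathfrak{A}_{k+1}$ by that of $\mathfrak{A}_k$. Applying Proposition \ref{decomp-of-standard-module-sim-alg} to $A_{k+1}$ and to $A_k$ and comparing, the summands $\mathfrak{U}_{[\sigma^i\phi^j]}$ that survive in the quotient are precisely those whose double coset lies in $\Gamma_E\backslash\Gamma_{E_{k+1}}/\Gamma_E$ but not in $\Gamma_E\backslash\Gamma_{E_k}/\Gamma_E$, i.e. in $\Gamma_E\backslash(\Gamma_{E_{k+1}}-\Gamma_{E_k})/\Gamma_E$, still subject to $j\equiv hj_k\pmod{e(\mathfrak{A}/\mathfrak{o}_E)}$.

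The only mild obstacle in the whole argument is clerical: keeping the two indexing conventions straight — the exponent $j'$ grading the $\mathfrak{P}_\mathfrak{A}$-filtration versus the exponent $j$ labelling Frobenius powers on $\boldsymbol{\mu}_E$, tied together by the Hasse invariant $h$ of $D$ — across the nested algebras $A_k\subseteq A_{k+1}\subseteq A$, and checking that the relevant modulus stabilizes at $e(\mathfrak{A}/\mathfrak{o}_E)$ rather than the a priori smaller $e(\mathfrak{A}_{k+1}/\mathfrak{o}_E)$, which holds because the ambient filtration throughout is the one by powers of $\mathfrak{P}_\mathfrak{A}$. No ingredient beyond Propositions \ref{prop decomp-of-standard-module-div-alg}, \ref{decomp-of-standard-module-sim-alg}, and \ref{non-zero quotient} enters.
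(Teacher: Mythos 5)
Your argument is correct and is essentially the paper's own: the paper likewise treats the proposition as an immediate consequence of the trichotomy (\ref{sim alg jump module}) together with Proposition \ref{non-zero quotient} and the observation, recorded in the sentence just before the statement, that $\Psi_{E/F}$ acts on $\mathfrak{P}_\mathfrak{A}^{j_k}\cap A_{k+1}$ through $\sigma^i\phi^{jh}\in\Gamma_{E_{k+1}}$ with $j\equiv hj_k \bmod e(\mathfrak{A}/\mathfrak{o}_E)$, which in turn rests on Propositions \ref{prop decomp-of-standard-module-div-alg} and \ref{decomp-of-standard-module-sim-alg}. Your repackaging via base change to $E_k$ and $E_{k+1}$ (licensed by the remark in Section \ref{subsection An embedding condition}) is only a cosmetic variant, and you settle the one real bookkeeping point --- that the congruence is modulo $e(\mathfrak{A}/\mathfrak{o}_E)$ rather than $e(\mathfrak{A}_{k+1}/\mathfrak{o}_E)$ --- the same way the paper does, by keeping the ambient $\mathfrak{P}_\mathfrak{A}$-grading in charge.
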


\subsection{Some properties of parities of jumps}

Let $R$ be the index when $f(E/E_R)$ is odd and $f(E/E_{R+1})$ is even.
\begin{lem}\label{sym-unram-part of WE(R+1)}
 We have $( \Gamma_{E_{R+1}}/\Gamma_E)_{{\mathrm{sym-unram}}}=( ( \Gamma_{E_{R+1}}-\Gamma_{E_{R}})/\Gamma_E)_{{\mathrm{sym-unram}}}$.
\end{lem}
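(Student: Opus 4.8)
The plan is to establish the non-trivial inclusion $(\Gamma_{E_{R+1}}/\Gamma_E)_{\mathrm{sym-unram}}\subseteq((\Gamma_{E_{R+1}}-\Gamma_{E_R})/\Gamma_E)_{\mathrm{sym-unram}}$, the reverse inclusion being immediate because $\Gamma_{E_{R+1}}-\Gamma_{E_R}\subseteq\Gamma_{E_{R+1}}$ and being symmetric unramified is an intrinsic property of a double coset. Equivalently, I must show that no symmetric unramified double coset of $\Gamma_E\backslash\Gamma_{E_{R+1}}/\Gamma_E$ lies inside $\Gamma_{E_R}$. Observe first that $f=f(E/F)$ is even: indeed $f(E/E_{R+1})$ divides $f$ and is even by the definition of $R$, so ``$\phi^{f/2}$'' is meaningful.

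First I would reduce to a statement about one Galois element. Let $[g]$ be a symmetric unramified double coset of $\Gamma_E\backslash\Gamma_{E_{R+1}}/\Gamma_E$. By the definition of ``symmetric unramified'' (Section \ref{section Galois groups}) together with the fact that the $\phi$-exponent modulo $f$ is constant along a double coset, $[g]$ has a representative of the form $\sigma^i\phi^{f/2}$; since $\Gamma_E\subseteq\Gamma_{E_{R+1}}$, the entire double coset, hence this representative, lies in $\Gamma_{E_{R+1}}$. Again because $\Gamma_E\subseteq\Gamma_{E_R}$, the double coset $\Gamma_Eg\Gamma_E$ is contained in $\Gamma_{E_R}$ if and only if $g\in\Gamma_{E_R}$. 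Thus it suffices to prove $g=\sigma^i\phi^{f/2}\notin\Gamma_{E_R}$.

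The key step is to look at the action on roots of unity. Since $\sigma$ fixes every root of unity of order prime to $p$ while $\phi$ raises each to its $q$-th power, the element $g=\sigma^i\phi^{f/2}$ acts on them by $z\mapsto z^{q^{f/2}}$, i.e. as the $(f/2)$-th power of Frobenius on residue fields. Suppose, for contradiction, that $g\in\Gamma_{E_R}$. Then $g$ fixes the maximal unramified subextension $K_R$ of $E_R/F$, which has residue degree $f(E_R/F)$ over $F$; a power $c$ of Frobenius is trivial on a finite extension of residue degree $b$ precisely when $b\mid c$, so $f(E_R/F)\mid f/2$. But by the definition of $R$ the integer $f(E/E_R)=f/f(E_R/F)$ is odd, so $v_2(f(E_R/F))=v_2(f)$, whereas $f(E_R/F)\mid f/2$ forces $v_2(f(E_R/F))\le v_2(f)-1$. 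This contradiction shows $g\notin\Gamma_{E_R}$, and the lemma follows.

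I expect the only point requiring care to be the bookkeeping among the double-coset sets: one should verify that ``symmetric unramified'' for a double coset of $\Gamma_E\backslash\Gamma_{E_{R+1}}/\Gamma_E$ coincides with the same notion for its image in $\Gamma_E\backslash\Gamma_F/\Gamma_E$ (which holds since the natural map of double-coset sets induced by $\Gamma_{E_{R+1}}\hookrightarrow\Gamma_F$ is injective and the $\phi$-exponent modulo $f$ is intrinsic), and that the chosen representative $\sigma^i\phi^{f/2}$ indeed remains in $\Gamma_{E_{R+1}}$. Beyond that, the argument is just the short $2$-adic computation above.
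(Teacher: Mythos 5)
Your proof is correct and follows essentially the same route as the paper: reduce via Proposition \ref{properties of symmetric [g]} to representatives of the form $\sigma^i\phi^{f/2}$ and then observe that no such element can lie in $\Gamma_{E_R}$ because $f(E/E_R)$ is odd. The paper states this last step in one line, while you spell out the underlying reason (the residue-degree divisibility $f(E_R/F)\mid f/2$ versus $v_2(f(E_R/F))=v_2(f)$), which is exactly the implicit content of the paper's argument.
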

\begin{proof}
Recall Proposition \ref{properties of symmetric [g]} that every symmetric unramified $[g]$ are of the form $[\sigma^i\phi^{f/2}]$, so there is no coset of the form $\sigma^i\phi^{f/2}$ belonging to $\Gamma_{E_{R}}$.
\end{proof}

Let $Q$ be the index when $e({\mathfrak{A}}_{Q}/{\mathfrak{o}_{E}})$ is odd and $e({\mathfrak{A}}_{Q+1}/{\mathfrak{o}_{E}})$ is even.
\begin{lem}\label{2power-condition-QR}
Suppose that $f$ is even. We always have $R\leq Q$. If moreover $m$ is odd, then $Q=R$.
  \end{lem}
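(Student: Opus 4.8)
The statement relates two "jump indices": $R$, defined via the parity switch of the residue degree $f(E/E_k)$, and $Q$, defined via the parity switch of the ramification index $e(\mathfrak{A}_k/\mathfrak{o}_E)$. The key is the factorization $e(\mathfrak{A}_k/\mathfrak{o}_E)=e(\mathfrak{A}/\mathfrak{o}_E)/e(\mathfrak{A}_k/\mathfrak{o}_{E}')$-type relations; more usefully, recall from Section \ref{subsection An embedding condition} that $e(\mathfrak{A}/\mathfrak{o}_E)=f/s=d/\gcd(d,e)$, and from the analogous computation applied to $E_k$ (using that $(E_0E_k,\mathfrak{A}_k)$ is again a canonical continuation) that $e(\mathfrak{A}_k/\mathfrak{o}_E)=f(E/E_k)/\gcd(f(E/E_k),m_{E_k})$ where $m_{E_k}=\gcd(m,n(E/E_k))$. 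So $e(\mathfrak{A}_k/\mathfrak{o}_E)$ is a quotient of $f(E/E_k)=f_{E_k}$ by a divisor of $m$.

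\textbf{Step 1.} First I would record the precise formula $e(\mathfrak{A}_k/\mathfrak{o}_E)=f_{E_k}/\gcd(f_{E_k},m)$ (using $\gcd(f_{E_k},m_{E_k})=\gcd(f_{E_k},m)$ since $m_{E_k}\mid m$ and $m_{E_k}$ is a multiple of $\gcd(f_{E_k},m)$ — this needs a small check). \textbf{Step 2.} For $R\leq Q$: I must show that whenever $f_{E_k}$ is odd, $e(\mathfrak{A}_k/\mathfrak{o}_E)$ is also odd. Indeed if $f_{E_k}$ is odd then $f_{E_k}/\gcd(f_{E_k},m)$ is odd, being a divisor of an odd number. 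Since the sequence $f_{E_k}$ is non-increasing in $k$ and $e(\mathfrak{A}_k/\mathfrak{o}_E)$ is likewise monotone in the relevant sense, "$f_{E_k}$ odd $\Rightarrow$ $e(\mathfrak{A}_k/\mathfrak{o}_E)$ odd" forces the switch index $Q$ (where $e$ becomes even) to occur no earlier than the switch index $R$ (where $f$ becomes even), i.e. $R\leq Q$. I would phrase this carefully: $R$ is the largest index with $f(E/E_R)$ odd; for all $k\leq R$ we then get $e(\mathfrak{A}_k/\mathfrak{o}_E)$ odd, so the largest index with $e$ odd is $\geq R$, giving $Q\geq R$.

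\textbf{Step 3.} For the converse $Q\leq R$ under "$m$ odd": if $m$ is odd then $\gcd(f_{E_k},m)$ is odd, so $e(\mathfrak{A}_k/\mathfrak{o}_E)=f_{E_k}/(\text{odd})$ has the same $2$-adic valuation as $f_{E_k}$. Hence $e(\mathfrak{A}_k/\mathfrak{o}_E)$ is even if and only if $f_{E_k}$ is even, so the two switch indices coincide: $Q=R$. I would also remark that the hypothesis "$f$ even" guarantees such switch indices $R$ and $Q$ actually exist (so the statement is non-vacuous), since $f(E/E)=1$ is odd and $f(E/F)=f$ is even.

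\textbf{Anticipated obstacle.} The only genuine subtlety is Step 1 — pinning down the correct formula for $e(\mathfrak{A}_k/\mathfrak{o}_E)$ and verifying $\gcd(f_{E_k},m_{E_k})=\gcd(f_{E_k},m)$ — since everything after that is a one-line $2$-adic valuation argument. One must be careful that the intermediate fields $E_k$ sit between $E_0$ and $F$, not between $E$ and $F$, so "$f(E/E_k)$" tacitly uses $E_0\subseteq E_k$ and the unramified piece; I would double-check that the monotonicity of $k\mapsto f(E/E_k)$ and $k\mapsto e(\mathfrak{A}_k/\mathfrak{o}_E)$ is along the same direction, so that "largest index with property odd" is well-defined in both cases. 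Once the formula is in hand, I expect the proof to be three or four lines.
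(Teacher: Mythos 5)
Your proposal is correct and takes essentially the same route as the paper: the paper likewise deduces $R\le Q$ from the divisibility $e(\mathfrak{A}_{k}/\mathfrak{o}_E)\mid f(E/E_{k})$, and proves $Q=R$ for odd $m$ by the identity $e(\mathfrak{A}_{k}/\mathfrak{o}_E)=f(E/E_{k})/\gcd\bigl(f(E/E_{k}),\gcd(m,n(E/E_{k}))\bigr)$ together with a comparison of $2$-powers, your $\gcd(f(E/E_k),m_{E_k})=\gcd(f(E/E_k),m)$ being a harmless equivalent simplification. The only caveat is your side remark that ``$f$ even'' alone guarantees the switch index $Q$ exists: that is clear only when $m$ is odd (for even $m$ one may have $e(\mathfrak{A}_k/\mathfrak{o}_E)$ odd for all $k$), but the paper itself tacitly assumes $Q$ is defined, so this does not affect the argument.
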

\begin{proof}
   We know that $e({\mathfrak{A}}_{k}/{\mathfrak{o}_{E}})$ divides $f(E/E_k)$, so that if $Q<R$, then the even number $e({\mathfrak{A}}_{Q+1}/{\mathfrak{o}_{E}})$ divides $f(E/E_{Q+1})$, which divides the odd number $f(E/E_R)$. This is a contradiction. Hence $R\leq Q$. When $R\lneq Q$, then
   \begin{equation}\label{e-odd-f-even}
    e({\mathfrak{A}}_{R+1}/{\mathfrak{o}_{E}})\text{ is odd and }f(E/E_{R+1})\text{ is even. }
   \end{equation}
   Since
  \begin{equation}\label{fraction e}
    e(\mathfrak{A}_{R+1}/\mathfrak{o}_E)=f_{E_{R+1}}/s_{E_{R+1}}=\frac{f(E/E_{R+1})}{\gcd(f(E/E_{R+1}),\gcd(m,n(E/E_{R+1})))},
  \end{equation}
  the statement (\ref{e-odd-f-even}) is equivalent to saying that
   \begin{equation}\label{2-power-equal}
     \text{the 2-powers of the numerator and the denominator on the right side of (\ref{fraction e}) are equal. }
   \end{equation}
   This power is greater than 0. Hence (\ref{2-power-equal}) is equivalent to that
\begin{equation}\label{2-power-non-neg}
(\text{the 2-power of }m)\geq (\text{the 2-power of }f(E/E_{R+1}))\gneq 0.
  \end{equation}
  If $m$ is odd, then (\ref{2-power-non-neg}) is a contradiction.
  \end{proof}

\subsection{Symmetric submodules}
We write ${}_A\mathfrak{V}_{\xi,\mathrm{sym}}={}_A\mathfrak{V}_{\xi}\cap \mathfrak{U}_{\mathrm{sym}}$ and ${}_A\mathfrak{V}_{\xi,{\mathrm{sym-ram}}}$ and ${}_A\mathfrak{V}_{\xi,{\mathrm{sym-unram}}}$ analogously.

\subsubsection{Case when $f$ is odd}

From Proposition \ref{totally ram AV equals MV}, we always have
\begin{equation}\label{sym submod f odd}
 {}_A\mathfrak{V}_{\xi,\mathrm{sym}}= {}_A\mathfrak{V}_{\xi,{\mathrm{sym-ram}}}\cong {}_M\mathfrak{V}_{\xi,{\mathrm{sym-ram}}}={}_M\mathfrak{V}_{\xi,\mathrm{sym}}.
\end{equation}

\subsubsection{Case when $f$ is even}
Notice that the natural projection $\mathbb{Z}/f\mathbb{Z}\rightarrow \mathbb{Z}/e({\mathfrak{A}}/{\mathfrak{o}_{E}})\mathbb{Z}$ maps
\begin{equation}\label{f/2-natural-proj}
f/2\mapsto
\begin{cases}
  0 & \text{ if }e({\mathfrak{A}}/{\mathfrak{o}_{E}})\text{ divides }f/2,
  \\
  e({\mathfrak{A}}/{\mathfrak{o}_{E}})/2 \neq 0 & \text{ otherwise }
  \end{cases}
\end{equation}
The condition that $e({\mathfrak{A}}/{\mathfrak{o}_{E}})$ divides $f/2$ is equivalent to that $s$ is even. When $f$ is even, then $s=\gcd(f,m)$ is even if and only if $m$ is even. We hence separate the cases according to the parity of $m$.

\subsubsection{Case when both $f$ and $m$ are even}

In this case, $f/2$ is mapped to 0 by $\mathbb{Z}/f\mathbb{Z}\rightarrow \mathbb{Z}/e({\mathfrak{A}}/{\mathfrak{o}_{E}})\mathbb{Z}$. We separate the cases according to the parity of the jump $a_k$. When $a_k$ is odd, neither 0 or $f/2$ is mapped to $hj_k\neq 0\in \mathbb{Z}/e({\mathfrak{A}}/{\mathfrak{o}_{E}})\mathbb{Z}$, and so ${}_A\mathfrak{V}_{\xi,k}$ is trivial. When $a_k$ is even, both $0$ and $f/2$ are mapped to $hj_k=0$ by (\ref{f/2-natural-proj}), and so ${}_A\mathfrak{V}_{\xi,k,\mathrm{sym}}\cong \mathfrak{U}_{k,\mathrm{sym}}$.

 We also recall that
 \begin{equation*}
   {}_M\mathfrak{V}_{\xi,k}\cong \begin{cases}
     0 &\text{if }a_k\text{ is odd,}
     \\
     \mathfrak{U}_{k,\mathrm{sym}}&\text{if }a_k\text{ is even.}
   \end{cases}
 \end{equation*}

Whatever the parity of $a_k$ is, we always have ${}_A\mathfrak{V}_{\xi,\mathrm{sym}}\cong {}_M\mathfrak{V}_{\xi,\mathrm{sym}}$.

\subsubsection{Case when $f$ is even and $m$ is odd}

In this case, notice that $e({\mathfrak{A}}/{\mathfrak{o}_{E}})$ must be even, and
  \begin{equation*}
j_k=e({\mathfrak{A}}/{\mathfrak{o}_{E}})a_k/2\equiv\begin{cases}
  0
  \\
  e({\mathfrak{A}}/{\mathfrak{o}_{E}})/2
\end{cases}
\mod e({\mathfrak{A}}/{\mathfrak{o}_{E}})
\begin{cases}
   \text{ if }a_k\text{ is even,}
  \\
   \text{ if }a_k\text{ is odd.}
\end{cases}
  \end{equation*}

  Therefore,
    \begin{equation*}\label{f/e(A/OE)-odd-type}
    {}_A\mathfrak{V}_{\xi,k,\mathrm{sym}}= \begin{cases}
{}_A\mathfrak{V}_{\xi,k,{\mathrm{sym-ram}}} & \text{ if }a_k\text{ is even,}
  \\
{}_A\mathfrak{V}_{\xi,k,{\mathrm{sym-unram}}} & \text{ if }a_k\text{ is odd.}
\end{cases}
  \end{equation*}
Using Proposition \ref{describe complete decomp}, we find that when $a_k$ is even,
$$ {}_A\mathfrak{V}_{\xi,k,\mathrm{sym}}=\mathfrak{U}_{E_k/E_{k+1},{\mathrm{sym-ram}}}=\bigoplus_{[\sigma^i]\in(\Gamma_E\backslash (\Gamma_{E_{R+1}}-\Gamma_{E_{R}})/\Gamma_E)_{\mathrm{sym}}} \mathfrak{U}_{[\sigma^i]},$$
and when $a_k$ is odd, ${}_A\mathfrak{V}_{\xi,k,\mathrm{sym}}$ is equal to
\begin{equation*}
\begin{cases}
  0&\text{when }k<Q,
  \\
  \mathfrak{U}_{E/E_{R+1},{\mathrm{sym-unram}}}=\bigoplus_{[\sigma^i\phi^{f/2}]\in(\Gamma_E\backslash \Gamma_{E_{R}}/\Gamma_E)_{\mathrm{sym}} }\mathfrak{U}_{[\sigma^i\phi^{f/2}]}&\text{when }k=Q,
  \\
  \mathfrak{U}_{E_k/E_{k+1},{\mathrm{sym-unram}}}=\bigoplus_{[\sigma^i\phi^{f/2}]\in(\Gamma_E\backslash (\Gamma_{E_{k+1}}-\Gamma_{E_{k}})/\Gamma_E)_{\mathrm{sym}} }\mathfrak{U}_{[\sigma^i\phi^{f/2}]}&\text{when }k>Q.
\end{cases}
\end{equation*}

We observe that, whether $a_k$ is odd or even, the symmetric unramified part of ${}_A\mathfrak{V}_{\xi}$ and ${}_M\mathfrak{V}_{\xi}$ are complementary, in the sense that
  $${}_A\mathfrak{V}_{\xi,k,{\mathrm{sym-unram}}}\oplus{}_M\mathfrak{V}_{\xi,k,{\mathrm{sym-unram}}}=\mathfrak{U}_{E_k/E_{k+1},{\mathrm{sym-unram}}}$$
for all $k=0,\dots,t$

We summarize the above in the following.
\begin{prop}\label{sym mod}
We always have $ {}_A\mathfrak{V}_{\xi,{\mathrm{sym-ram}}}\cong {}_M\mathfrak{V}_{\xi,{\mathrm{sym-ram}}}$ and
  \begin{enumerate}[(i)]
        \item when $f$ is odd, or when both $f$ and $m$ are even, then ${}_A\mathfrak{V}_{\xi,{\mathrm{sym-unram}}}\cong {}_M\mathfrak{V}_{\xi,{\mathrm{sym-unram}}}$;
            \item when $f$ is even and $m$ is odd, then $ {}_A\mathfrak{V}_{\xi,{\mathrm{sym-unram}}}\oplus{}_M\mathfrak{V}_{\xi,{\mathrm{sym-unram}}}=\mathfrak{U}_{{\mathrm{sym-unram}}}.$
  \end{enumerate}
\end{prop}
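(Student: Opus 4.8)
The plan is to assemble Proposition \ref{sym mod} by gathering the case-by-case computations that precede it, so the proof is essentially bookkeeping once the right invariants are in hand. First I would dispose of the symmetric ramified part: the identity ${}_A\mathfrak{V}_{\xi,{\mathrm{sym-ram}}}\cong {}_M\mathfrak{V}_{\xi,{\mathrm{sym-ram}}}$ holds uniformly. When $f$ is odd this is contained in (\ref{sym submod f odd}) (indeed there ${}_A\mathfrak{V}_{\xi,\mathrm{sym}}={}_A\mathfrak{V}_{\xi,{\mathrm{sym-ram}}}$ and likewise for $\mathfrak{M}$), and when $f$ is even it follows from comparing the two descriptions of ${}_A\mathfrak{V}_{\xi,k,{\mathrm{sym-ram}}}$ and ${}_M\mathfrak{V}_{\xi,k}$ obtained in the subsections treating $m$ even and $m$ odd: in each case the symmetric ramified summand indexed by a given jump $k$ is $\mathfrak{U}_{E_k/E_{k+1},{\mathrm{sym-ram}}}$ when $a_k$ is even and trivial when $a_k$ is odd, independently of the inner form. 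Summing over $k=0,\dots,t$ gives the isomorphism.

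Next I would handle statement (i). When $f$ is odd there are no symmetric unramified double cosets at all (every symmetric unramified $[g]$ has the shape $[\sigma^i\phi^{f/2}]$ by Proposition \ref{properties of symmetric [g]}, which requires $f$ even), so both sides are zero and the claim is vacuous — though it also follows from (\ref{sym submod f odd}). When both $f$ and $m$ are even, I would invoke the computation in the subsection ``Case when both $f$ and $m$ are even'': there $f/2$ maps to $0$ under $\mathbb{Z}/f\mathbb{Z}\to\mathbb{Z}/e(\mathfrak{A}/\mathfrak{o}_E)\mathbb{Z}$, so for each $k$ one gets ${}_A\mathfrak{V}_{\xi,k,\mathrm{sym}}\cong\mathfrak{U}_{k,\mathrm{sym}}$ when $a_k$ is even and ${}_A\mathfrak{V}_{\xi,k}=0$ when $a_k$ is odd, matching ${}_M\mathfrak{V}_{\xi,k}$ on the nose; restricting to the unramified part and summing over $k$ gives ${}_A\mathfrak{V}_{\xi,{\mathrm{sym-unram}}}\cong{}_M\mathfrak{V}_{\xi,{\mathrm{sym-unram}}}$.

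Finally, for statement (ii) — $f$ even, $m$ odd — I would quote the explicit decompositions of ${}_A\mathfrak{V}_{\xi,k,\mathrm{sym}}$ worked out in that subsection (distinguishing $k<Q$, $k=Q$, $k>Q$, using Lemma \ref{2power-condition-QR} to identify $Q=R$ and Lemma \ref{sym-unram-part of WE(R+1)} to rewrite the $\Gamma_{E_{R+1}}$-part as the $(\Gamma_{E_{R+1}}-\Gamma_{E_R})$-part) together with the parallel fact for $\mathfrak{M}$ that ${}_M\mathfrak{V}_{\xi,k}$ is $\mathfrak{U}_{k,\mathrm{sym}}$ for $a_k$ even and $0$ for $a_k$ odd. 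The point already observed in the text is that for each $k$ the sets of $\sigma^i\phi^{f/2}$-cosets occurring in ${}_A\mathfrak{V}_{\xi,k,{\mathrm{sym-unram}}}$ and in ${}_M\mathfrak{V}_{\xi,k,{\mathrm{sym-unram}}}$ are disjoint and exhaust $(\Gamma_E\backslash(\Gamma_{E_{k+1}}-\Gamma_{E_k})/\Gamma_E)_{\mathrm{sym}}$ — when $a_k$ is even the $A$-module carries no unramified part while the $M$-module is $\mathfrak{U}_{k,{\mathrm{sym-unram}}}$, and when $a_k$ is odd it is the other way around (with the $k=Q$ adjustment handled by the lemmas). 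Hence ${}_A\mathfrak{V}_{\xi,k,{\mathrm{sym-unram}}}\oplus{}_M\mathfrak{V}_{\xi,k,{\mathrm{sym-unram}}}=\mathfrak{U}_{E_k/E_{k+1},{\mathrm{sym-unram}}}$, and summing over $k=0,\dots,t$ yields the desired $\mathfrak{U}_{{\mathrm{sym-unram}}}$. The only mildly delicate point — the ``main obstacle'' — is checking that the indexing in the $k=Q$ case lines up correctly, i.e.\ that $\Gamma_E\backslash\Gamma_{E_R}/\Gamma_E$ on the $A$-side is exactly complementary within $\mathfrak{U}_{E_Q/E_{Q+1},{\mathrm{sym-unram}}}$ to what appears on the $M$-side; this is precisely where Lemmas \ref{2power-condition-QR} and \ref{sym-unram-part of WE(R+1)} are needed, and I would spell out that step carefully while treating the rest as routine summation.
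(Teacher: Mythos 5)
Your proposal is correct and follows essentially the same route as the paper: Proposition \ref{sym mod} is stated there as a summary of the immediately preceding case-by-case computations, and you assemble exactly those ingredients — (\ref{sym submod f odd}) for $f$ odd, the jump-by-jump comparison with ${}_M\mathfrak{V}_{\xi,k}$ when $f$ and $m$ are even, and the complementarity at each jump in the $f$ even, $m$ odd case, with Lemmas \ref{2power-condition-QR} and \ref{sym-unram-part of WE(R+1)} handling the $k=Q$ alignment. Your explicit treatment of the symmetric ramified part for $f$ even, and your identification of the $k=Q$ bookkeeping as the only delicate step, match the paper's intent, so no changes are needed.
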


\subsection{t-factors of isotypic components}

We recall the values of the t-factors $t_{{\boldsymbol{\Gamma}}}^{i}(\mathfrak{V})$, $i=0,1$, when ${\boldsymbol{\Gamma}}$ is one of the cyclic subgroups ${\boldsymbol{\mu}}$ and $\varpi$ of $\Psi_{E/F}$ defined in (\ref{finite cyclic subgp}), and $\mathfrak{V}$ is a symmetric module $\boldsymbol{\mathfrak{U}}_{[g]}$ defined in (\ref{def sym mod}). The following Proposition describes all $t_{{\boldsymbol{\Gamma}}}^{i}(\boldsymbol{\mathfrak{U}}_{[g]})$ except when $[g]=[\sigma^{e/2}]$.

\begin{prop}[{\cite[Proposition 4.9]{thesis}}]\label{summary of t-factors}
\begin{enumerate}[(i)]
\item If $[g]=[\sigma^i\phi^j]$ is asymmetric, then
\begin{equation*}
  \begin{split}
    &t^0_{{\boldsymbol{\mu}}}(\boldsymbol{\mathfrak{U}}_{[g]})=1,\,\quad t^1_{{\boldsymbol{\mu}}}  (\boldsymbol{\mathfrak{U}}_{[g]}) :{{z}}\mapsto    \mathrm{sgn}_{{{z}}^{q^{i}-1}}(\mathfrak{U}_{[g]}),
    \\
    &t^0_{\varpi}(\boldsymbol{\mathfrak{U}}_{[g]})=1,\text{ and }t^1_{\varpi}(\boldsymbol{\mathfrak{U}}_{[g]})(\varpi_E) = \mathrm{sgn}_{{{z}}^i_e {{z}}_{\phi^j}}(\mathfrak{U}_{[g]}).
  \end{split}
\end{equation*}
    \item If $[g]=[\sigma^i]$ is symmetric and not equal to $[1]$ or $[\sigma^{e/2}]$, then
        \begin{equation*}
  \begin{split}
    &t^0_{{\boldsymbol{\mu}}}(\mathfrak{U}_{[g]})=1,\,\quad t^1_{{\boldsymbol{\mu}}}(\mathfrak{U}_{[g]}) \equiv1,
    \\
    & t^0_\varpi(\mathfrak{U}_{[g]})= -1, \text{ and }
    t^1_\varpi(\mathfrak{U}_{[g]}):\varpi_E\mapsto\left(\frac{{{z}}_e^i}{\ker(N_{\mathbb{F}_p[{{z}}^i_e]/\mathbb{F}_p[{{z}}^i_e]_\pm})}\right).
  \end{split}
\end{equation*}
    \item If $[g]=[\sigma^i\phi^{f/2}]$ is symmetric, then
    \begin{equation*}
       t^0_{{\boldsymbol{\mu}}}(\mathfrak{U}_{[g]})=-1,\,\quad t^1_{\boldsymbol{\mu}}(\mathfrak{U}_{[g]})\text{ is quadratic},
    \end{equation*}
and         \begin{enumerate}[(I)]
\item if ${{z}}_e^i{{z}}_{\phi^{f/2}}=1$, then $t^0_{\varpi}(\mathfrak{U}_{[g]}) = 1$ and $t^1_{\varpi}(\mathfrak{U}_{[g]}) \equiv 1$;
\item if ${{z}}_e^i{{z}}_{\phi^{f/2}}=-1$, then $t^0_{\varpi}(\mathfrak{U}_{[g]})=1$ and $t^1_{\varpi}(\mathfrak{U}_{[g]})(\varpi_E)=(-1)^{\frac{1}{2}(q^{f/2}-1)}$;
\item if ${{z}}_e^i{{z}}_{\phi^{f/2}}\neq\pm1$, then $t^0_{\varpi}(\mathfrak{U}_{[g]}) = -1$ and $$t^1_{\varpi}(\mathfrak{U}_{[g]}):\varpi_E\mapsto \left(\frac{{{z}}_e^i{{z}}_{\phi^{f/2}}}{\ker(N_{\mathbb{F}_p[{{z}}_e^i{{z}}_{\phi^{f/2}}]/\mathbb{F}_p[{{z}}_e^i{{z}}_{\phi^{f/2}}]_\pm})}\right).$$
\end{enumerate}
\end{enumerate}
\qed\end{prop}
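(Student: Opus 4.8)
The plan is to evaluate $t^0_{\boldsymbol{\Gamma}}(\boldsymbol{\mathfrak{U}}_{[g]})$ and $t^1_{\boldsymbol{\Gamma}}(\boldsymbol{\mathfrak{U}}_{[g]})$, for ${\boldsymbol{\Gamma}}\in\{{\boldsymbol{\mu}},\varpi\}$, by deciding for each double coset $[g]$ which of the three defining regimes of the t-factors recalled in Section~\ref{section Invariants of finite symplectic modules} the symplectic ${\boldsymbol{\Gamma}}$-module $\boldsymbol{\mathfrak{U}}_{[g]}$ belongs to: ${\boldsymbol{\Gamma}}$ acts trivially, or $\boldsymbol{\mathfrak{U}}_{[g]}$ is hyperbolic, or it is anisotropic. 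The only inputs are the explicit description (\ref{explicit action of psi}) of the $\Psi_{E/F}$-action on each root-space module $\mathfrak{U}_{[g]}$, the definition (\ref{def sym mod}) of the symmetric module $\boldsymbol{\mathfrak{U}}_{[g]}$, and the structural dichotomy of Proposition~\ref{hyper and aniso and properties}; throughout I silently identify ${\boldsymbol{\Gamma}}$ with its image under the character by which it acts on $\mathfrak{U}_{[g]}$, so that $\mathrm{sgn}$ and the symbols $\left(\tfrac{\cdot}{\cdot}\right)$ make sense as functions on ${\boldsymbol{\Gamma}}$.

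I would then go through the three types of coset. If $[g]=[\sigma^i\phi^j]$ is asymmetric, then $[g]\neq[g^{-1}]$ and, by the construction of the complete symmetric decomposition, the two summands of $\boldsymbol{\mathfrak{U}}_{[g]}=\mathfrak{U}_{[g]}\oplus\mathfrak{U}_{[g^{-1}]}$ carry mutually inverse $\Psi_{E/F}$-characters and are put into perfect duality by $h_\theta$; hence $\boldsymbol{\mathfrak{U}}_{[g]}|_{\boldsymbol{\Gamma}}$ is hyperbolic, so $t^0_{\boldsymbol{\Gamma}}=1$ and $t^1_{\boldsymbol{\Gamma}}=\mathrm{sgn}_{\lambda({\boldsymbol{\Gamma}})}(\mathfrak{U}_{[g]})$ with the ${\boldsymbol{\Gamma}}$-character $\lambda$ read off from (\ref{explicit action of psi}) (this formula remains correct in the borderline case where $\mathfrak{U}_{[g]}|_{\boldsymbol{\Gamma}}$ is itself anisotropic, since both sides are then trivial). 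If $[g]=[\sigma^i]$ is symmetric and $\neq[1],[\sigma^{e/2}]$, then the $\phi$-exponent vanishes, so by (\ref{explicit action of psi}) ${\boldsymbol{\mu}}$ acts trivially, giving $t^0_{\boldsymbol{\mu}}=1$, $t^1_{\boldsymbol{\mu}}\equiv1$; on the other hand $\varpi_E$ acts by ${{z}}_e^{-i}$, the two exclusions force ${{z}}_e^i\neq\pm1$ so this character is not quadratic, and symmetry of $[g]$ makes $\mathfrak{U}_{[g]}$ self-dual as a $\varpi$-module, whence it is anisotropic; the anisotropic formula then gives $t^0_\varpi=-1$ and $t^1_\varpi(\varpi_E)=\left(\tfrac{{{z}}_e^i}{\ker N}\right)$ with $N=N_{\mathbb{F}_p[{{z}}^i_e]/\mathbb{F}_p[{{z}}^i_e]_\pm}$. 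If $[g]=[\sigma^i\phi^{f/2}]$ is symmetric (so $f$ is even), then ${\boldsymbol{\mu}}$ acts through the nontrivial character ${{z}}\mapsto{{z}}^{1-q^{f/2}}$, which one checks is non-quadratic using the combinatorics of Section~\ref{section Galois groups} and Proposition~\ref{parity of double coset with i=f/2}(i), so $\mathfrak{U}_{[g]}$ is anisotropic over ${\boldsymbol{\mu}}$, whence $t^0_{\boldsymbol{\mu}}=-1$ and $t^1_{\boldsymbol{\mu}}$ is the corresponding $\pm1$-valued symbol; for $\varpi$ I split by the order of ${{z}}_e^i{{z}}_{\phi^{f/2}}$, the inverse of the scalar by which $\varpi_E$ acts: if it is $1$ the $\varpi$-action is trivial; if it is $-1$ the module is hyperbolic with quadratic character and $t^1_\varpi(\varpi_E)$ is the permutation sign of multiplication by $-1$ on the relevant simple constituent, which works out to $(-1)^{\frac{1}{2}(q^{f/2}-1)}$; if its order exceeds $2$ the module is anisotropic and $t^1_\varpi(\varpi_E)$ is the symbol displayed.

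I expect the main obstacle to be twofold. First, in the non-quadratic symmetric cases I must establish genuine \emph{anisotropy} rather than mere abstract self-duality: this needs the even-$\mathbb{F}_p$-dimension and norm-kernel conditions in the anisotropic case of Proposition~\ref{hyper and aniso and properties}, which in turn rest on the explicit shape of symmetric double cosets in $\Gamma_E\backslash\Gamma_F/\Gamma_E$ worked out in Section~\ref{section Galois groups} (Proposition~\ref{properties of symmetric [g]} and the description $\Gamma_{L/E}=\langle\sigma^c\phi^f\rangle$); I also have to make sure the fields $\mathbb{F}_p[{{z}}^i_e]_\pm$ and $\mathbb{F}_p[{{z}}_e^i{{z}}_{\phi^{f/2}}]_\pm$ appearing in the symbols are exactly the $\pm$-subfields of the correct simple constituents, via the identification $\mathfrak{U}_{[g]}\cong\mathbf{k}_{E({}^gE)}$ of Section~\ref{section standard module}. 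Second, pinning down the exact value $(-1)^{\frac{1}{2}(q^{f/2}-1)}$ in case (iii)(II) requires computing the $\mathbb{F}_p$-cardinality of the simple $\varpi$-submodule on which $-1$ acts, for which I would use the residue-field identification of $\mathfrak{U}_{[g]}$ together with the parity facts for $[E_{\pm g}:E]$ in Proposition~\ref{parity of double coset with i=f/2}. The coset $[g]=[\sigma^{e/2}]$ is excluded on purpose: it is precisely the point where ${\boldsymbol{\mu}}$ and $\varpi$ cease to act independently (there ${{z}}_e^{e/2}=-1$ already lies in the ${\boldsymbol{\mu}}$-image), and it must be treated separately.
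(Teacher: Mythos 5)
The paper itself offers no argument for this proposition: it is quoted from \cite{thesis}, Proposition 4.9, and the surrounding apparatus of this paper (the explicit action (\ref{explicit action of psi}), the hyperbolic/anisotropic dichotomy of Proposition \ref{hyper and aniso and properties}, and the parity results of Section \ref{section Galois groups}) exists precisely to support the computation you outline. Your plan — classify each $\boldsymbol{\mathfrak{U}}_{[g]}$, restricted to ${\boldsymbol{\mu}}$ and to $\varpi$, as trivial, hyperbolic or anisotropic and read off the t-factors — is that computation, and the asymmetric case and the trivial-action subcases are fine as you present them (your reading of the ${\boldsymbol{\mu}}$-eigenvalue from (\ref{explicit action of psi}), namely ${{z}}^{1-q^j}$, is the internally consistent one; the exponent $q^i-1$ in the displayed statement is a slip, harmless since $\mathrm{sgn}$ is insensitive to inversion).

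The step you must not leave at the level of ``self-dual, whence anisotropic'' is the symmetric case. The module $\mathfrak{U}_{[g]}\cong\mathbf{k}_{E_g}$ is almost never a single simple $\mathbb{F}_p\Gamma$-module for $\Gamma={\boldsymbol{\mu}}$ or $\varpi$: it is an orthogonal sum of $[\mathfrak{U}_{[g]}:\mathbb{F}_p[\lambda(\Gamma)]]$ copies of the anisotropic simple module $\mathbb{F}_p[\lambda(\Gamma)]$, and since $t^0_\Gamma$ and $t^1_\Gamma$ are multiplicative, the asserted values $t^0_\Gamma=-1$ and $t^1_\Gamma=(\text{the displayed symbol})$ hold exactly because this multiplicity is \emph{odd}. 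For $\Gamma=\varpi$ that odd-multiplicity statement is Proposition \ref{order 2 trick}(ii) — and it is precisely where the exclusion $[g]\neq[\sigma^{e/2}]$ enters — while for $\Gamma={\boldsymbol{\mu}}$ in case (iii) the oddness comes from Proposition \ref{parity of double coset with i=f/2}(i) together with an elementary $2$-power argument identifying $\mathbb{F}_p[\lambda({\boldsymbol{\mu}})]$ inside $\mathbf{k}_E$; you gesture at adjacent parity facts among your ``obstacles'' but never isolate this as the statement actually needed, and without it $t^0=-1$ simply does not follow from self-duality. The same parity input is what justifies (iii)(II): each indecomposable hyperbolic plane there has half equal to $\mathbb{F}_p$, so the per-plane sign is $(-1)^{(p-1)/2}$, and only after multiplying over the $\tfrac12\dim_{\mathbb{F}_p}\mathfrak{U}_{[g]}$ planes and invoking the oddness of $[\mathbf{k}_{E_g}:\mathbf{k}_E]$ does the product collapse to $(-1)^{\frac12(q^{f/2}-1)}$; it is the sign of $-1$ on a Lagrangian of $\mathfrak{U}_{[g]}$, not on a single simple constituent as your sketch suggests. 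With these points made explicit, your argument is complete and coincides with the intended (cited) proof.
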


In the exceptional case, when $[g]=[\sigma^{e/2}]$, we have ${\boldsymbol{\mu}}_{E_g}={\boldsymbol{\mu}}_E$. To unify notation, we define
\begin{equation}\label{define t1-mu-U-sigma{e/2}}
  t^1_{\boldsymbol{\mu}}(\mathfrak{U}_{[\sigma^{e/2}]}):{\boldsymbol{\mu}}_{E}\rightarrow\{\pm1\},\,x\mapsto \left(\frac{x}{{\boldsymbol{\mu}}_E}\right).
\end{equation}
The $\mathbb{F}_p\varpi$-module structure of $\mathfrak{U}_{[\sigma^{e/2}]}$ does not concern us (see the sentence after Formula (\ref{zeta-data-e/2})).

The following properties concerning symmetric double cosets are useful when computing the above t-factors.
\begin{prop}\label{order 2 trick}
Suppose that $[g]$ is symmetric.
\begin{enumerate}[(i)]
  \item If $[g]$ is ramified (resp. unramified), then $[\mathfrak{U}_{[g]}:\mathbf{k}_E]$ is even (resp. odd).
      \item Let $\mathbb{F}_p[\left[\begin{smallmatrix}
1\\  g
\end{smallmatrix}\right](\varpi_E)]$ be the field extension of $\mathbb{F}_p$ generated by the image of $\left[\begin{smallmatrix}
1\\  g
\end{smallmatrix}\right](\varpi_E)$ in $ \bar{\mathbf{k}}^\times_F$. If $[g]\neq [\sigma^{e/2}]$, then the degree $[\mathfrak{U}_{[g]}:\mathbb{F}_p[\left[\begin{smallmatrix}
1\\  g
\end{smallmatrix}\right](\varpi_E)]]$ is odd.
\end{enumerate}
\end{prop}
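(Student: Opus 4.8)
The plan is to analyze the two field extensions $\mathbf{k}_E \subseteq \mathfrak{U}_{[g]}$ and $\mathbb{F}_p[[\begin{smallmatrix}1\\g\end{smallmatrix}](\varpi_E)] \subseteq \mathfrak{U}_{[g]}$ separately, using the explicit description of $\mathfrak{U}_{[g]}$ as $\mathbf{k}_{E({}^gE)}$ together with the $\Psi_{E/F}$-action written out in \eqref{explicit action of psi}, and to reduce everything to a counting of ramification/residue degrees governed by Proposition \ref{properties of symmetric [g]} and Proposition \ref{parity of double coset with i=f/2}. Recall from Section \ref{section root system} that $E_g = E({}^gE)$ and that $\mathfrak{U}_{[g]} \cong \mathbf{k}_{E_g}$ as a $\mathbf{k}_F$-vector space; since $\mathfrak{U}_{[g]}$ is a field containing $\mathbf{k}_E$, the degree $[\mathfrak{U}_{[g]}:\mathbf{k}_E]$ equals $f(E_g/E)$. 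So part (i) is the assertion that $f(E_g/E)$ is even when $[g]$ is ramified and odd when $[g]$ is unramified.

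For part (i), I would use the dichotomy in Proposition \ref{properties of symmetric [g]}: a symmetric $[g]$ is either $[\sigma^i]$ (ramified) or $[\sigma^i\phi^{f/2}]$ (unramified). The unramified case is exactly Proposition \ref{parity of double coset with i=f/2}(i), which already states $[E_g:E]$ is odd; since $E_g/E$ is in fact unramified in that case (the coset $\sigma^i\phi^{f/2}$ fixes $\varpi_E$ up to a root of unity but acts nontrivially on residues), $f(E_g/E) = [E_g:E]$ is odd, giving the claim. For the ramified case $[g] = [\sigma^i]$ with $i \neq 0$: here $\sigma^i$ fixes all of $\boldsymbol{\mu}_E$ and sends $\varpi_E \mapsto z_e^i\varpi_E$, so ${}^gE = E$ on residue fields, i.e. $\mathbf{k}_{E_g} = \mathbf{k}_E(z_e^i)$ — wait, more carefully, $E_g = E({}^gE)$ where ${}^gE$ differs from $E$ by the uniformizer, so $E_g$ is obtained by adjoining a further root of unity needed to witness the conjugacy; the residue extension $\mathbf{k}_{E_g}/\mathbf{k}_E$ is generated by $z_e^i$, and I expect the symmetry condition $[g]=[g^{-1}]$ forces $\mathbf{k}_{E_{\pm g}} = \mathbf{k}_{E_g}$ to be a genuine quadratic sub-extension, making $f(E_g/E)$ even. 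The cleanest route is probably to invoke Proposition \ref{hyper and aniso and properties}(ii)/(\ref{even deg from def of aniso}): a symmetric ramified $[g]$ produces an anisotropic summand, and anisotropy forces $[\mathbb{F}_p[\lambda(\boldsymbol\Gamma)]:\mathbb{F}_p]$ — hence $[\mathfrak{U}_{[g]}:\mathbb{F}_p]$ — to be even in a way compatible with the $\boldsymbol\mu$-action being trivial (so that the evenness is carried by the $\mathbf{k}_E$-degree); I would spell this out by comparing with the split-case computation in \cite{thesis}.

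For part (ii), assuming $[g] \neq [\sigma^{e/2}]$, I would compute the subfield $\mathbb{F}_p[[\begin{smallmatrix}1\\g\end{smallmatrix}](\varpi_E)]$ of $\mathfrak{U}_{[g]}$ explicitly: by \eqref{explicit action of psi} the element $[\begin{smallmatrix}1\\g\end{smallmatrix}](\varpi_E)$ is $(z_e^i z_{\phi^j})^{-1}$ when $[g]=[\sigma^i\phi^j]$, so the subfield in question is $\mathbb{F}_p[z_e^i z_{\phi^j}]$. The claim $[\mathfrak{U}_{[g]}:\mathbb{F}_p[z_e^i z_{\phi^j}]]$ odd then amounts to: the residual part $\mathbf{k}_{E_g}/\mathbb{F}_p[z_e^i z_{\phi^j}]$ (coming from the $\boldsymbol\mu$-action, which contributes the residue field) has odd degree, i.e. all the "2-power" in $[\mathfrak{U}_{[g]}:\mathbb{F}_p]$ is already accounted for by the $\varpi$-part — which matches exactly the anisotropy analysis of Proposition \ref{hyper and aniso and properties}(\ref{even deg from def of aniso}) where the order-2 automorphism is visibly tied to the $\varpi_E$-action, not the $\boldsymbol\mu$-action (the latter being trivial or of odd nature on a symmetric module as per Proposition \ref{summary of t-factors}(ii),(iii)). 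In the unramified case $[g]=[\sigma^i\phi^{f/2}]$ I would instead use that the whole $\mathfrak{U}_{[g]}/\mathbf{k}_E$ is odd by part (i), and $\mathbb{F}_p[z_e^i z_{\phi^{f/2}}]$ sits between $\mathbb{F}_p$ and $\mathbf{k}_{E_g}$ with $\mathbf{k}_{E_g}/\mathbb{F}_p[z_e^iz_{\phi^{f/2}}]$ inheriting odd degree. The exclusion of $[\sigma^{e/2}]$ is precisely because there $z_e^{e/2} = -1 \in \mathbb{F}_p^\times$, so $\mathbb{F}_p[[\begin{smallmatrix}1\\g\end{smallmatrix}](\varpi_E)] = \mathbb{F}_p$ and the degree becomes $[\mathbf{k}_E:\mathbb{F}_p] = f$, which need not be odd.

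I expect the main obstacle to be the ramified symmetric case of part (i): one must genuinely show that the quadratic extension $E_g/E_{\pm g}$ predicted by the definition of "symmetric" descends to a quadratic (rather than trivial) residue extension, i.e. that the extra $2$-torsion is not absorbed into the ramification. This is where the precise setup of Section \ref{section Galois groups} — the relation $\Gamma_{L/E} = \langle\sigma^c\phi^f\rangle$ and the structure of double cosets $[\sigma^i]$ — has to be used, and it is the step most likely to require an honest (though short) computation with the roots of unity $z_e, z_{\phi^f}$ rather than a soft argument. Everything else should follow formally from Propositions \ref{properties of symmetric [g]}, \ref{parity of double coset with i=f/2}, and \ref{hyper and aniso and properties}, plus the explicit action \eqref{explicit action of psi}.
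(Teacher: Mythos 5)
Your reduction is the right one: $\mathfrak{U}_{[g]}\cong\mathbf{k}_{E_g}$ with $E_g=E[z_e^iz_{\phi^j}]$ unramified over $E$, so both degrees in the statement are degrees of finite-field extensions, and your treatment of the unramified half of (i) (Proposition \ref{parity of double coset with i=f/2} together with $f(E_g/E)=[E_g:E]$) is exactly the paper's argument. But the two places where you stop short are precisely where the paper's one-line proof hides content (the ``simple calculation'' for ramified $[g]$, and the citation of \cite[Lemma 4.8]{thesis} for (ii)), and your proposed substitutes do not close them. For (i) ramified, anisotropy in the sense of Proposition \ref{hyper and aniso and properties} controls $[\mathbb{F}_p[\lambda(\boldsymbol{\Gamma})]:\mathbb{F}_p]$, the degree over the \emph{prime} field generated by the eigenvalue $z_e^i$, whereas the assertion concerns $[\mathbf{k}_E[z_e^i]:\mathbf{k}_E]$, i.e.\ the order of $q^f$ modulo $e_i:=e/\gcd(e,i)$; evenness of the former does not transfer to the latter, and your phrase ``the evenness is carried by the $\mathbf{k}_E$-degree'' is exactly the unproved point. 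The needed computation is short and should be done directly: since $\Gamma_{L/E}=\langle\sigma^c\phi^f\rangle$ and $\phi^f\sigma^i\phi^{-f}=\sigma^{iq^f}$, the cosets of ramified type in $[\sigma^i]$ are the $\sigma^{iq^{fk}}\Gamma_E$, so symmetry means $q^{fk}\equiv-1\pmod{e_i}$ for some $k$; when $e_i>2$ (i.e.\ $[g]\neq[\sigma^{e/2}]$) this forces the order of $q^f$ modulo $e_i$, which is $[\mathfrak{U}_{[g]}:\mathbf{k}_E]$, to be even. The coset $[\sigma^{e/2}]$ (where $E_g=E$ and the degree is $1$) shows that no soft argument ignoring this case can work; your hoped-for ``genuine quadratic residue extension'' fails there, and you never exclude it.

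For (ii) you offer only heuristics, and one inference is invalid: in the unramified case, knowing $[\mathfrak{U}_{[g]}:\mathbf{k}_E]$ is odd does not let $\mathbf{k}_{E_g}/\mathbb{F}_p[z_e^iz_{\phi^{f/2}}]$ ``inherit'' odd degree, because $[\mathbf{k}_{E_g}:\mathbb{F}_p[w]]$ divides $[\mathbf{k}_{E_g}:\mathbb{F}_p]=[\mathbf{k}_E:\mathbb{F}_p]\cdot[\mathbf{k}_{E_g}:\mathbf{k}_E]$ and $[\mathbf{k}_E:\mathbb{F}_p]$ is even in that case ($f$ is even). What must actually be proved---and is the content of \cite[Lemma 4.8]{thesis}, which the paper simply invokes---is that the full $2$-part of $[\mathfrak{U}_{[g]}:\mathbb{F}_p]$ already divides the degree of $\mathbb{F}_p[w]$, equivalently that $\mathbb{F}_p[w]$ is not contained in the unique index-two subfield of $\mathfrak{U}_{[g]}$, where $w$ denotes the image of $\left[\begin{smallmatrix}1\\ g\end{smallmatrix}\right](\varpi_E)$; this requires an honest comparison, via the symmetry relation, of the order of $z_e^iz_{\phi^j}$ with powers of $q$, in both the ramified and the unramified cases. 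Your remark that ``the order-$2$ automorphism is tied to the $\varpi_E$-action'' points in the right direction but is not an argument. In sum: same skeleton as the paper for the unramified half of (i), but genuine gaps remain at (i) ramified and at all of (ii).
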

\proof
The first statement for ramified $[g]$ is a simple calculation, and that for unramified $[g]$ is a consequence of Proposition \ref{parity of double coset with i=f/2}. The second statement is proved in {\cite[Lemma 4.8]{thesis}}.
\qed

We would like to extend our definition of the t-factors $t^{i}_{{\boldsymbol{\mu}}}(\boldsymbol{\mathfrak{U}}_{[g]})$, with $i=0,1$, from ${\boldsymbol{\mu}}$ to ${\boldsymbol{\mu}}_{g}={\boldsymbol{\mu}}_{E_g}/{\boldsymbol{\mu}}_F$. We define
\begin{equation*}\label{definition extended t-factor-0}
t^{0}_{{\boldsymbol{\mu}}_{g}}(\boldsymbol{\mathfrak{U}}_{[g]}):=t^{0}_{{\boldsymbol{\mu}}}(\boldsymbol{\mathfrak{U}}_{[g]})
\end{equation*}
and for all ${{z}}\in {\boldsymbol{\mu}}_{g}$,
\begin{equation*}\label{definition extended t-factor-1}
t^{1}_{{\boldsymbol{\mu}}_{g}}(\boldsymbol{\mathfrak{U}}_{[g]}):{{z}}\mapsto
\begin{cases}
\text{sgn}_{\left[\begin{smallmatrix}
  1 \\ g
\end{smallmatrix}\right]({{z}})}(\mathfrak{U}_{[g]}) & \text{ if }[g]\text{ is asymmetric,}
  \\
\left(\frac{\left[\begin{smallmatrix}
  1 \\ \sigma^{i}\phi^{f[\mathfrak{U}_{[g]}:\mathbf{k}_E]/2}
\end{smallmatrix}\right]({{z}})}{\ker N_{\mathbf{k}_{E_g}/\mathbf{k}_{E_{\pm g}}}}\right)  & \text{ if }[g]\text{ is symmetric.}
\end{cases}
\end{equation*}

\begin{prop}
  The restriction $t^{1}_{{\boldsymbol{\mu}}_{g}}(\boldsymbol{\mathfrak{U}}_{[g]})$ to ${\boldsymbol{\mu}}$ is $t^{1}_{{\boldsymbol{\mu}}}(\boldsymbol{\mathfrak{U}}_{[g]})$ .
\end{prop}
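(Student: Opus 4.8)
The plan is to split the verification according to whether the double coset $[g]$ is symmetric, reducing each case to two facts: that $\phi$ restricts on ${\boldsymbol{\mu}}_E=\mathbf{k}_E^\times$ to the Frobenius of $\mathbf{k}_E/\mathbf{k}_F$, so that $\phi^{f}$ acts trivially on ${\boldsymbol{\mu}}_E$ (where $f=f(E/F)$), and the parity statements in Proposition \ref{order 2 trick}(i). Throughout, recall that $E\subseteq E_g=E({}^{g}E)$, so that ${\boldsymbol{\mu}}$ is naturally a subgroup of ${\boldsymbol{\mu}}_g$. The asymmetric case is then immediate: by the hyperbolic case of the definition of the t-factors, $t^{1}_{{\boldsymbol{\mu}}}(\boldsymbol{\mathfrak{U}}_{[g]})$ is the character ${{z}}\mapsto\mathrm{sgn}_{\left[\begin{smallmatrix}1\\ g\end{smallmatrix}\right]({{z}})}(\mathfrak{U}_{[g]})$, the signature of multiplication by $\left[\begin{smallmatrix}1\\ g\end{smallmatrix}\right]({{z}})$ as a permutation of the set $\mathfrak{U}_{[g]}\cong\mathbf{k}_{E_g}$ (this matches Proposition \ref{summary of t-factors}(i), replacing an eigenvalue by its inverse being invisible to the signature); this is exactly the asymmetric branch of the defining formula for $t^{1}_{{\boldsymbol{\mu}}_g}(\boldsymbol{\mathfrak{U}}_{[g]})$, and since neither the element $\left[\begin{smallmatrix}1\\ g\end{smallmatrix}\right]({{z}})$ nor the set $\mathfrak{U}_{[g]}$ changes when ${{z}}\in{\boldsymbol{\mu}}$ is regarded inside ${\boldsymbol{\mu}}_g$, the two characters agree on ${\boldsymbol{\mu}}$.

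For symmetric $[g]$ I may assume $[g]\neq[\sigma^{e/2}]$, since otherwise $E_g=E$, hence ${\boldsymbol{\mu}}_g={\boldsymbol{\mu}}$ and both sides coincide by the convention fixed for that exceptional case. Write $[g]=[\sigma^{i}\phi^{j_0}]$ with $j_0\in\{0,f/2\}$ (Proposition \ref{properties of symmetric [g]}). The main reduction is that $f[\mathfrak{U}_{[g]}:\mathbf{k}_E]/2\equiv j_0\pmod{f}$: by Proposition \ref{order 2 trick}(i) the index $[\mathfrak{U}_{[g]}:\mathbf{k}_E]$ is even when $[g]$ is ramified ($j_0=0$) and odd when it is unramified ($j_0=f/2$, so $f$ is even). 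Since $\phi^{f}$ acts trivially on ${\boldsymbol{\mu}}_E$, for ${{z}}\in{\boldsymbol{\mu}}$ we get $\left[\begin{smallmatrix}1\\ \sigma^{i}\phi^{f[\mathfrak{U}_{[g]}:\mathbf{k}_E]/2}\end{smallmatrix}\right]({{z}})=\left[\begin{smallmatrix}1\\ g\end{smallmatrix}\right]({{z}})=\lambda({{z}})$, where $\lambda=\left[\begin{smallmatrix}1\\ g\end{smallmatrix}\right]|_{{\boldsymbol{\mu}}}$ is the character governing the $\mathbb{F}_p{\boldsymbol{\mu}}$-module $\mathfrak{U}_{[g]}$; hence $t^{1}_{{\boldsymbol{\mu}}_g}(\boldsymbol{\mathfrak{U}}_{[g]})|_{{\boldsymbol{\mu}}}$ is the character ${{z}}\mapsto\left(\frac{\lambda({{z}})}{\ker N_{\mathbf{k}_{E_g}/\mathbf{k}_{E_{\pm g}}}}\right)$. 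When $[g]$ is ramified, $\lambda$ is trivial (as $\sigma$ fixes ${\boldsymbol{\mu}}_E$ pointwise), so this is the trivial character, in agreement with $t^{1}_{{\boldsymbol{\mu}}}(\mathfrak{U}_{[g]})\equiv 1$ from Proposition \ref{summary of t-factors}(ii).

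When $[g]$ is unramified, $\mathfrak{U}_{[g]}$ is anisotropic as an $\mathbb{F}_p{\boldsymbol{\mu}}$-module (Proposition \ref{summary of t-factors}(iii)). Writing $\mathbf{k}'=\mathbb{F}_p[\lambda({\boldsymbol{\mu}})]$ and $\mathbf{k}'_\pm=\mathbb{F}_p[\lambda({\boldsymbol{\mu}})]_\pm$, one has $\mathfrak{U}_{[g]}\cong\mathfrak{V}_\lambda^{\oplus k}$ over ${\boldsymbol{\mu}}$ with $\mathfrak{V}_\lambda=\mathbf{k}'$ anisotropic and $k=[\mathbf{k}_{E_g}:\mathbf{k}']$ odd (an even $k$ would force $t^{0}_{{\boldsymbol{\mu}}}(\mathfrak{U}_{[g]})=+1$), so $t^{1}_{{\boldsymbol{\mu}}}(\mathfrak{U}_{[g]})=t^{1}_{{\boldsymbol{\mu}}}(\mathfrak{V}_\lambda)$ is the character ${{z}}\mapsto\left(\frac{\lambda({{z}})}{\ker N_{\mathbf{k}'/\mathbf{k}'_\pm}}\right)$, and $\lambda({{z}})$ indeed lies in $\ker N_{\mathbf{k}'/\mathbf{k}'_\pm}$ by Proposition \ref{hyper and aniso and properties}. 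It therefore suffices to show that, for such $x=\lambda({{z}})$, membership in the group of squares is detected identically by $C_1:=\ker N_{\mathbf{k}'/\mathbf{k}'_\pm}$ and by $C_2:=\ker N_{\mathbf{k}_{E_g}/\mathbf{k}_{E_{\pm g}}}$. I would verify that $\mathbf{k}'\subseteq\mathbf{k}_{E_g}$, $\mathbf{k}'_\pm\subseteq\mathbf{k}_{E_{\pm g}}$, and $[\mathbf{k}_{E_{\pm g}}:\mathbf{k}'_\pm]=[\mathbf{k}_{E_g}:\mathbf{k}']=k$, so that $C_1\subseteq C_2$ with index $(a^{k}+1)/(a+1)$ where $a=\#\mathbf{k}'_\pm$; for $p$ odd (when $p=2$ every $t^{1}$ is trivial) and $k$ odd this index is odd, and for cyclic groups $C_1\subseteq C_2$ of even order with $[C_2:C_1]$ odd one has $C_1\cap C_2^{\,2}=C_1^{\,2}$, which is exactly the required insensitivity. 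Combining the asymmetric, symmetric-ramified, and symmetric-unramified cases proves the Proposition.

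The genuinely nontrivial step is this last comparison: enlarging the ambient quadratic extension of finite fields by an odd-degree constant-field extension does not change the square-class computation inside the norm-one subgroup. Everything else — the triviality of $\phi^{f}$ on roots of unity, the parity statements of Proposition \ref{order 2 trick}, and the explicit values in Proposition \ref{summary of t-factors} — is routine bookkeeping.
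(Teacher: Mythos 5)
Your proposal is correct and follows essentially the same route as the paper's proof: the asymmetric case is immediate from the definitions, the symmetric ramified case follows because the relevant root value on ${\boldsymbol{\mu}}$ is trivial (the exponent $f[\mathfrak{U}_{[g]}:\mathbf{k}_E]/2$ being $\equiv 0 \bmod f$), and the symmetric unramified case reduces to the odd index of $\ker N_{\mathbf{k}_{E}/\mathbf{k}_{E\pm}}$ inside $\ker N_{\mathbf{k}_{E_g}/\mathbf{k}_{E_{\pm g}}}$, which is exactly the use of Proposition \ref{order 2 trick}. You merely make explicit a few points the paper leaves implicit — the convention for $[\sigma^{e/2}]$, the oddness of the multiplicity via $t^0_{\boldsymbol{\mu}}=-1$, and the elementary fact that for cyclic groups $C_1\subseteq C_2$ of odd index one has $C_1\cap C_2^{\,2}=C_1^{\,2}$ — all of which are sound.
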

\begin{proof}
  For asymmetric $[g]$, the result is immediate by definition. For symmetric ramified $[g]$, the restriction of the root $\left[\begin{smallmatrix}
  1 \\ \sigma^{i}\phi^{f[\mathfrak{U}_{[g]}:\mathbf{k}_E]}
\end{smallmatrix}\right]$ to ${\boldsymbol{\mu}}={\boldsymbol{\mu}}_E/{\boldsymbol{\mu}}_F$ is trivial, so the assertion is again true. When $[g]$ is symmetric unramified, we have to show that the restriction of  $\left(\frac{\cdot}{\ker N_{\mathbf{k}_{E_g}/\mathbf{k}_{E_{\pm g}}}}\right)$ to ${\boldsymbol{\mu}}$ is $\left(\frac{\cdot}{\ker N_{\mathbf{k}_{E}/\mathbf{k}_{E\pm}}}\right)$, or equivalently, to show that the index of the subgroup $\ker N_{\mathbf{k}_{E}/\mathbf{k}_{E\pm}}\cong{\boldsymbol{\mu}}_{q^{f/2}+1}$  of $\ker N_{\mathbf{k}_{E}/\mathbf{k}_{E\pm}}\cong{\boldsymbol{\mu}}_{q^{f[\mathfrak{U}_{[g]}:\mathbf{k}_E]/2}+1}$ is odd, which follows from Proposition \ref{order 2 trick}.
\end{proof}

\section{Zeta-data}

\subsection{Admissible embeddings of L-tori}\label{section Admissible embeddings of tori}

As mentioned in Section \ref{section-relation-previous}, to understand $\zeta$-data, it is better to first understand $\chi$-data, which is motivated by constructing admissible embeddings of L-tori \cite[Section 2.6]{LS}.

We take $T$ to be an elliptic torus of $G$ isomorphic to $\mathrm{Res}_{E/F}\mathbb{G}_m$. Its dual torus $\hat{T}$ is $\mathrm{Ind}_{E/F}(\mathbb{C}^\times)$, which is isomorphic to $(\mathbb{C}^\times)^n$ as a group. It is equipped with the induced action of the Weil group $\mathcal{W}_F$, which factors through the action of the Galois group $\Gamma_F$. We define the L-torus ${}^LT:=\hat{T}\rtimes \mathcal{W}_F$ as the L-group of $T$.

We assume that the dual torus $\hat{T}$ is embedded into the L-group ${}^LG=\hat{G}\times \mathcal{W}_F$ of $G$, where $\hat{G}=\mathrm{GL}_n(\mathbb{C})$, with image $\mathcal{T}$. For convenience, we simply denote the image of $t\in \hat{T}$ by the embedding  $\hat{T}\rightarrow\mathcal{T}\subset\hat{G}$ also by $t\in \mathcal{T} $. This embedding should be defined using the chosen splittings of $G$ and $\hat{G}$. As we do not need the full detail of the definition of this embedding, we only refer to \cite[Section 2.5]{LS} for details (or, when $(G,T)=(\mathrm{GL}_n,\mathrm{Res}_{E/F}\mathbb{G}_m)$, see \cite[Section 6.1]{thesis}). All we need to know is that we can always assume that the image $\mathcal{T}$ is the diagonal subgroup of $\hat{G}$.

With the embedding $\hat{T}\rightarrow\mathcal{T}$ chosen, an \emph{admissible embedding} from ${}^L T$ to ${}^L G$ is a morphism of groups $I: {}^LT \rightarrow {}^LG$ of the form
\begin{equation*}
I(t \rtimes w) = t {I}(1\rtimes w) \text{ for all }t\rtimes w\in {}^LT.
\end{equation*}
 Note that an admissible embedding maps $\mathcal{W}_F$ into $N_{\hat{G}}(\mathcal{T})$, i.e., the factor ${I}(1\rtimes w)$ above lies in $N_{\hat{G}}(\mathcal{T})$. Two admissible embeddings $I_1$, $I_2$ are called $\mathrm{Int}(\mathcal{T})$-equivalent if there is $t\in \mathcal{T}$ such that $${I_1}(w)=t{I}_2(w)t^{-1}\text{ for all }w\in \mathcal{W}_F.$$

By \cite[Section 2.6]{LS}, admissible embeddings exist, and the collection of these embeddings can be described as follows.
\begin{prop}\label{AE as torsor}
  The set of admissible embeddings from ${}^LT $ to ${}^ LG$ is a $Z^1(\mathcal{W}_F,\hat{T})$-torsor, and the set of the $\mathrm{Int}(\mathcal{T})$-equivalence classes of these embeddings is an $H^1(\mathcal{W}_F,\hat{T})$-torsor.
\end{prop}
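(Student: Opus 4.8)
The plan is to reduce the statement to a routine cocycle-lifting argument. First I would replace an admissible embedding $I$ by its ``Weyl part'' $n\colon\mathcal{W}_F\to N_{\hat{G}}(\mathcal{T})$, $w\mapsto I(1\rtimes w)$; the prescribed shape $I(t\rtimes w)=t\,n(w)$ shows that $I$ and $n$ determine each other. Expanding the requirement that $I$ be a group homomorphism ${}^LT=\hat{T}\rtimes\mathcal{W}_F\to{}^LG=\hat{G}\times\mathcal{W}_F$ which restricts on $\hat{T}$ to the fixed embedding $\hat{T}\xrightarrow{\sim}\mathcal{T}$ yields the identity ${}^{w_1}t_2\cdot n(w_1w_2)=n(w_1)\,t_2\,n(w_2)$ for all $t_2\in\hat{T}$ and $w_1,w_2\in\mathcal{W}_F$; setting $t_2=1$ shows $n$ is a homomorphism, and letting $t_2$ vary shows that conjugation by $n(w)$ on $\mathcal{T}$ realizes the action of $w$ on $\hat{T}$. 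Writing $\Omega=N_{\hat{G}}(\mathcal{T})/\mathcal{T}$ for the Weyl group and $\rho\colon\mathcal{W}_F\to\Omega$ for the homomorphism through which $\mathcal{W}_F$ acts on $\hat{T}\cong\mathcal{T}$ (a permutation representation, as $\mathcal{T}$ is the diagonal torus of $\mathrm{GL}_n(\mathbb{C})$), this says exactly that admissible embeddings are the same thing as homomorphic lifts $n$ of $\rho$ along $N_{\hat{G}}(\mathcal{T})\twoheadrightarrow\Omega$, the reverse assignment being $n\mapsto\bigl(t\rtimes w\mapsto t\,n(w)\bigr)$. Such a lift exists: the permutation matrices split $N_{\hat{G}}(\mathcal{T})\to\Omega\cong S_n$, so lifting $\rho$ through this splitting gives a base point $n_0$.

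Next I would show that $n\mapsto a_n$, with $a_n(w):=n(w)\,n_0(w)^{-1}$, is a bijection from the set of lifts of $\rho$ onto $Z^1(\mathcal{W}_F,\hat{T})$. Each $a_n(w)$ lies in $\mathcal{T}\cong\hat{T}$ because $n(w)$ and $n_0(w)$ have the same image $\rho(w)$ in $\Omega$; expanding $a_n(w_1w_2)=n(w_1)\,a_n(w_2)\,n_0(w_1)^{-1}$ and using that conjugation by $n(w_1)$ on $\mathcal{T}$ is the $w_1$-action gives the cocycle identity $a_n(w_1w_2)=a_n(w_1)\cdot{}^{w_1}a_n(w_2)$. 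Conversely, for $a\in Z^1(\mathcal{W}_F,\hat{T})$ the pointwise product $w\mapsto a(w)\,n_0(w)$ is again a homomorphic lift of $\rho$ (the same computation read backwards), and the two constructions are mutually inverse. Since $n\mapsto a_n$ is visibly equivariant for translation by $Z^1(\mathcal{W}_F,\hat{T})$, which acts freely and transitively on both sides, the set of admissible embeddings is a $Z^1(\mathcal{W}_F,\hat{T})$-torsor.

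For the second assertion I would unwind $\mathrm{Int}(\mathcal{T})$-equivalence. Two lifts $n,n'$ give $\mathrm{Int}(\mathcal{T})$-equivalent embeddings iff $n(w)=t\,n'(w)\,t^{-1}$ for some $t\in\mathcal{T}$ and all $w$; substituting $n=a_n\,n_0$ and $n'=a_{n'}\,n_0$ (pointwise products) and once more using that conjugation by $n_0(w)$ on $\mathcal{T}$ is the $w$-action turns this into $a_n(w)=a_{n'}(w)\cdot t\cdot{}^{w}(t^{-1})$ for all $w$, that is, $a_n$ and $a_{n'}$ differ by the coboundary of $t^{-1}$. Hence the bijection of the previous step descends to a bijection, still equivariant, between the set of $\mathrm{Int}(\mathcal{T})$-equivalence classes of admissible embeddings and $H^1(\mathcal{W}_F,\hat{T})$, which is exactly the asserted torsor structure.

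The step that will need the most care is the first one: one must keep straight that the $\mathcal{W}_F$-action on $\hat{T}$ is always carried by conjugation by $n(w)$ and never by conjugation by $1\rtimes w$ inside ${}^LG=\hat{G}\times\mathcal{W}_F$, and one must fix a single left/right convention for cocycles and coboundaries so that the two displayed computations come out consistent. One should also recall throughout that the homomorphisms and cocycles here are the continuous ones, the $\mathcal{W}_F$-action on $\hat{T}$ itself factoring through the finite quotient $\Gamma_{E/F}$; granting that and the base point $n_0$, the remainder of the argument is purely formal.
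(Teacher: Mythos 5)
Your argument is correct and complete, and it is worth noting that the paper itself offers no proof of this proposition: it simply quotes the statement as a consequence of Langlands--Shelstad (Section 2.6 of \cite{LS}), so your write-up is a genuinely self-contained verification of what the paper treats as standard. The torsor part of your proof (passing to the Weyl part $n(w)=I(1\rtimes w)$, noting that the homomorphism property forces $n$ to be a homomorphic lift of $\rho\colon\mathcal{W}_F\to\Omega$ inducing the Galois action by conjugation, and then identifying differences of two lifts with cocycles in $Z^1(\mathcal{W}_F,\hat{T})$ and $\mathrm{Int}(\mathcal{T})$-conjugacy with coboundaries) is exactly the standard twisting argument underlying the LS statement, so there is no divergence there. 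Where you do depart from the source is on non-emptiness: LS establish existence of admissible embeddings for a general reductive group via the $\chi$-data construction (which is precisely the machinery the paper goes on to use, since it needs the explicit embeddings $I_{\{\chi_\lambda\}}$, not just existence), whereas you exploit the $\mathrm{GL}_n$-specific fact that the permutation matrices split $N_{\hat{G}}(\mathcal{T})\twoheadrightarrow\Omega\cong S_n$, giving a base-point lift $n_0$ directly. That shortcut is legitimate here and makes the proposition elementary, at the cost of not generalizing to groups where the normalizer extension does not split; it also does not produce the particular embeddings the paper later needs. Two small points you handle implicitly and should keep explicit: the definition of admissible embedding is understood to be an L-homomorphism over $\mathcal{W}_F$, i.e.\ $I(1\rtimes w)=(n(w),w)$ in $\hat{G}\times\mathcal{W}_F$ (your computation uses this), and continuity of $n_0$ and of all cocycles follows, as you say, from the fact that $\rho$ factors through a finite quotient of $\Gamma_F$.
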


The idea in \cite[Section 2.5]{LS} of constructing an admissible embedding is to choose a set of characters
$$\{ \chi_{\lambda} \}_{\lambda \in \Phi}\text{, where }\chi_{\lambda}:E^\times_{\lambda} \rightarrow \mathbb{C}^\times,$$
called \emph{$\chi$-data}, such that the following conditions hold.
\begin{defn}\label{chi-data condition}
  \begin{enumerate}[(i)]
\item For each $\lambda\in\Phi$, we have $ \chi_{-\lambda} = \chi_\lambda^{-1}$ and $\chi_{{}^w\lambda} = {}^w\chi_\lambda$ for all $w \in \mathcal{W}_F$. \label{chi-data condition transition}
\item If $\lambda$ is symmetric, then $\chi|_{E^\times_{\pm \lambda}}$ equals the quadratic character $\delta_{E_{\lambda}/E_{\pm \lambda}}$ attached to the extension ${E_{\lambda}/E_{\pm \lambda}}$. \label{chi-data condition quadratic}
\end{enumerate}
\end{defn}

Remember that, in Section \ref{section root system}, we choose a subset $\mathcal{R}_\pm=\mathcal{R}_\mathrm{sym}\sqcup \mathcal{R}_{\mathrm{asym}/\pm}$ of $\Phi$ representing the orbits
$ \mathcal{W}_F\backslash\Phi_\text{sym}$ and $ \mathcal{W}_F\backslash\Phi_{\mathrm{asym}/\pm}$. Hence, by condition (\ref{chi-data condition transition}), the set of $\chi$-data depends completely on the subset $\{ \chi_\lambda\}_{\lambda \in \mathcal{R}_\pm}$. We still call such a subset a set of $\chi$-data. Moreover, using Artin reciprocity \cite{Tate-NTB}, we may regard each $\chi_\lambda$ as a character of the Weil group $\mathcal{W}_{E_\lambda}$.

Following the recipe in \cite[Section 2.5]{LS}, we can define an admissible embedding
$$I_{\{{\chi_\lambda}\}}:{}^LT\rightarrow {}^LG$$
depending on a given set of $\chi$-data. In our present situation, we can describe the admissible embedding $I_{\{{\chi_\lambda}\}}$ in Proposition \ref{recover the character from induction} below. We first recall the Langlands correspondence for the torus $T=\mathrm{Res}_{E/F}\mathbb{G}_m$, which is a bijection
\begin{equation}\label{hom as H1}
\text{Hom}(T(F), \mathbb{C}^\times) \rightarrow H^1(\mathcal{W}_F, \hat{T}) .
\end{equation}
Given a character $\xi$ of $T(F)=E^\times$, we denote by $\tilde{\xi}$ a 1-cocycle in $Z^1(\mathcal{W}_F,\hat{T})$ whose class is the image of $\xi$ under (\ref{hom as H1}). Given $\chi$-data $\{\chi_\lambda\}_{\lambda\in\mathcal{R}_\pm}$, we define
\begin{equation*}\label{mu as a product of chi-data}
\mu :=\mu_{\{\chi_\lambda\}}=\prod_{\lambda\in \mathcal{R}}\mathrm{Res}^{E^\times_\lambda}_{E^\times}\chi_\lambda,
\end{equation*}
where $\mathcal{R}=\mathcal{R}_\pm\sqcup(-\mathcal{R}_{\mathrm{asym}/\pm})$ is a subset representing $\Gamma_F\backslash\Phi$. It is easy to check that the product of the restricted characters is independent of representatives in $ \mathcal{R}$, so we usually write
\begin{equation*}
\mu =\prod_{[\lambda]\in\Gamma_F\backslash\Phi}\mathrm{Res}^{E^\times_\lambda}_{E^\times}\chi_\lambda.
\end{equation*}

\begin{prop}[\text{\cite[Proposition 6.5]{thesis}}]\label{recover the character from induction}
For every character $\xi$ of $E^\times$, the composition
$$I_{\{\chi_\lambda\}}\circ\tilde{\xi}:\mathcal{W}_F \rightarrow {}^LT \rightarrow {}^LG \xrightarrow{\text{proj}}\mathrm{GL}_n(\mathbb{C})$$
 is isomorphic to $\mathrm{Ind}_{E/F}(\xi \cdot \mu_{\{\chi_\lambda\}})$ as a representation of $\mathcal{W}_F$.
\qed\end{prop}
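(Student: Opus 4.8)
The plan is to unwind the recipe of \cite[Section 2.5]{LS} for the admissible embedding $I_{\{\chi_\lambda\}}$ in the special case $(G,T)=(\mathrm{GL}_n,\mathrm{Res}_{E/F}\mathbb{G}_m)$, where the combinatorics become completely explicit, and then to compose with $\tilde\xi$ and compute the resulting $\mathcal{W}_F$-representation on $\mathbb{C}^n$. First I would fix the standard data: identify $\hat T=\mathrm{Ind}_{E/F}(\mathbb{C}^\times)$ with $(\mathbb{C}^\times)^n$ using the basis indexed by $\Gamma_F/\Gamma_E$, identify $\mathcal{T}$ with the diagonal torus of $\hat G=\mathrm{GL}_n(\mathbb{C})$, so that $N_{\hat G}(\mathcal{T})/\mathcal{T}$ is the symmetric group $S_n$ acting on $\Gamma_F/\Gamma_E$, and recall that the $\mathcal{W}_F$-action on $\hat T$ permutes the coordinates through the quotient $\mathcal{W}_F\twoheadrightarrow\Gamma_F\to\mathrm{Perm}(\Gamma_F/\Gamma_E)$. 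In these terms the canonical projection $\mathrm{Ind}_{E/F}(\xi)$ is nothing but the cocycle $\tilde\xi$ followed by the permutation-plus-diagonal embedding; the content of the proposition is that twisting by $\chi$-data multiplies this by $\mathrm{Ind}_{E/F}(\mu_{\{\chi_\lambda\}})$.

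The key steps, in order. (1) Recall from \cite[Section 2.5]{LS} that $I_{\{\chi_\lambda\}}(1\rtimes w)=r_{\{\chi_\lambda\}}(w)\cdot n(w)$, where $n(w)$ is a fixed set-theoretic lift of the image of $w$ in $\mathrm{Perm}(\Gamma_F/\Gamma_E)$ to $N_{\hat G}(\mathcal{T})$ (built from the splitting), and $r_{\{\chi_\lambda\}}:\mathcal{W}_F\to\mathcal{T}$ is the explicit $\mathcal{T}$-valued cochain assembled coordinatewise out of the characters $\chi_\lambda$ — concretely, using a set of representatives for $\mathcal{W}_F$-orbits on $\Phi$ and the gauge $p$ of \emph{loc. cit.}, the $gE$-coordinate of $r_{\{\chi_\lambda\}}(w)$ is a product of values $\chi_\lambda(\cdot)$ over roots $\lambda$ that "change sign" under $w$ relative to a chosen base point. (2) Compose with $\tilde\xi$: the $gE$-coordinate of $(I_{\{\chi_\lambda\}}\circ\tilde\xi)(w)$ is $\tilde\xi(w)_{gE}\cdot r_{\{\chi_\lambda\}}(w)_{gE}$, and then $n(w)$ permutes the coordinates. (3) Recognize the result: as a representation of $\mathcal{W}_F$ on $\mathbb{C}^n$ this is exactly $\mathrm{Ind}_{\mathcal{W}_E}^{\mathcal{W}_F}$ of the character $w_E\mapsto \tilde\xi(w_E)\cdot r_{\{\chi_\lambda\}}(w_E)$ of $\mathcal{W}_E$, because induction of a character is precisely "permute the coordinate lines and scale the distinguished line by the character." (4) Identify the scaling character on $\mathcal{W}_E$: by the definition of the gauge and of $\chi$-data, restricting $r_{\{\chi_\lambda\}}$ to $\mathcal{W}_E$ and reading off the base coordinate yields $\prod_{[\lambda]\in\Gamma_F\backslash\Phi}\mathrm{Res}^{E^\times_\lambda}_{E^\times}\chi_\lambda=\mu_{\{\chi_\lambda\}}$ under Artin reciprocity — here conditions (\ref{chi-data condition transition}) and (\ref{chi-data condition quadratic}) of Definition \ref{chi-data condition} are what make the product over $\mathcal{R}$ well defined and independent of the choice of representatives and of the auxiliary $a$-data. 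Combining (3) and (4) gives $\mathrm{Ind}_{E/F}(\xi\cdot\mu_{\{\chi_\lambda\}})$, as claimed.

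The main obstacle is step (4): matching the Langlands--Shelstad cochain $r_{\{\chi_\lambda\}}$ — defined through $a$-data, gauges, and a bookkeeping of sign changes along $\mathcal{W}_F$-orbits of roots — with the naive product $\mu_{\{\chi_\lambda\}}=\prod_{[\lambda]}\mathrm{Res}^{E^\times_\lambda}_{E^\times}\chi_\lambda$, and checking that the auxiliary choices (the $a$-data, the gauge $p$, the splitting-dependent lift $n(w)$) all drop out up to $\mathrm{Int}(\mathcal{T})$-equivalence, hence do not affect the isomorphism class of the representation. This is essentially a careful but formal manipulation, and since it is carried out in detail in \cite[Section 6]{thesis} (Proposition 6.5), I would organize the proof so that the new content here — if any — is only the observation that nothing in that argument used $G=G^*$ rather than a general inner form, because the computation takes place entirely on the dual side ${}^LG=\mathrm{GL}_n(\mathbb{C})\times\mathcal{W}_F$, which is the same for $G$ and $G^*$. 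Thus the proof reduces to citing \cite[Proposition 6.5]{thesis} after noting this invariance, together with Proposition \ref{AE as torsor} to pin down that the embedding is determined up to the relevant equivalence by the $\chi$-data.
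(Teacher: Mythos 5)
The paper gives no proof of this proposition at all: it is quoted directly from \cite[Proposition 6.5]{thesis}, and your proposal — unwinding the Langlands--Shelstad recipe ($I_{\{\chi_\lambda\}}(1\rtimes w)=r_{\{\chi_\lambda\}}(w)n(w)$, recognizing the monomial representation as an induction, and identifying the scaling character on $\mathcal{W}_E$ with $\mu_{\{\chi_\lambda\}}$) before reducing to that same citation — is exactly the route taken there, so it is essentially the same approach. One minor remark: $a$-data do not enter the construction of the admissible embedding (only the gauge and the $\chi$-data do), so the independence check you flag in step (4) is slightly lighter than you state.
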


We now define a analogous set of characters
$$\{ \zeta_{\lambda} \}_{\lambda \in \Phi}\text{, where }\zeta_{\lambda}:E^\times_{\lambda} \rightarrow \mathbb{C}^\times,$$
called \emph{$\zeta$-data}, such that the following conditions hold.
\begin{defn}\label{zeta-data condition}
  \begin{enumerate}[(i)]
\item For each $\lambda\in\Phi$, we have $ \zeta_{-\lambda} = \zeta_\lambda^{-1}$ and $\zeta_{{}^w\lambda} = {}^w\zeta_\lambda$ for all $w \in \mathcal{W}_F$. \label{zeta-data condition transition}
\item If $\lambda$ is symmetric, then $\zeta|_{E^\times_{\pm \lambda}}$ is trivial. \label{zeta-data condition quadratic}
\end{enumerate}
\end{defn}
We can view a set of $\zeta$-data as the difference of two sets of $\chi$-data. Motivated from Propositions \ref{AE as torsor} and  \ref{recover the character from induction}, the product character
\begin{equation*}\label{nu as a product of chi-data}
\nu :=\nu_{\{\chi_g\}}=\prod_{[g]\in( \mathcal{W}_E \backslash \mathcal{W}_F/\mathcal{W}_E)'}\mathrm{Res}^{E^\times_g}_{E^\times}\zeta_g.
\end{equation*}
can be viewed as measuring the difference of two admissible embeddings.

Recall that, similar to choosing $\mathcal{R}_\pm$, we can also choose $\mathcal{D}_\pm=\mathcal{D}_\mathrm{sym}\sqcup \mathcal{D}_{\mathrm{asym}/\pm}$ to be a subset of $\Gamma_F/\Gamma_E$ consisting of representatives of $(\Gamma_E\backslash \Gamma_F/\Gamma_E)_\mathrm{sym}$ and $(\Gamma_E\backslash \Gamma_F/\Gamma_E)_{\mathrm{asym}/\pm}$ respectively, and obtain a bijection from Proposition \ref{orbit of roots as double coset},
\begin{equation*}\label{bijection between representatives}
  \mathcal{R}_\pm=\mathcal{R}_\text{sym}\bigsqcup\mathcal{R}_{\mathrm{asym}/\pm}\rightarrow \mathcal{D}_\pm=\mathcal{D}_\text{sym}\bigsqcup\mathcal{D}_{\mathrm{asym}/\pm},\,\lambda=\left[\begin{smallmatrix}1\\g\end{smallmatrix}\right]\mapsto g.
\end{equation*}
We usually denote by $E_g$ and $E_{\pm g}$ the fields $E_ \lambda$ and $E_{\pm \lambda}$ respectively, if ${g\in\mathcal{D}_\pm}$ corresponds to ${\lambda\in\mathcal{R}_\pm}$. We also denote by $\zeta_g$ the character $\zeta_\lambda$, and write
\begin{equation*}
\nu :=\nu_{\{\chi_g\}}=\prod_{[g]\in( \mathcal{W}_E \backslash \mathcal{W}_F/\mathcal{W}_E)'}\mathrm{Res}^{E^\times_g}_{E^\times}\zeta_g.
\end{equation*}

\subsection{Symmetric unramified zeta-data }\label{section Symmetric unramified zeta-data}

We choose a specific $\zeta$-data $\zeta_g$ for each $[g]=[\sigma^i\phi^{f/2}]\in ( \mathcal{W}_E \backslash \mathcal{W}_F/\mathcal{W}_E)_{{\mathrm{sym-unram}}}$, base on the results from the $\chi$-datum $\chi_g$.

Notice that, since $E_{g}/E_{\pm g}$ is quadratic unramified, the norm group $N_{E_{g}/E_{\pm g}}({E^\times_{g}})$ has a decomposition
$${\boldsymbol{{\boldsymbol{\mu}}}}_{E_{\pm g}}\times\left<{{z}}_e^i{{z}}_{\phi^{f/2}}\varpi_E^2\right>\times U^1_{E_{\pm g}}$$
and we take a root of unity ${{z}}_0\in {\boldsymbol{{\boldsymbol{\mu}}}}_{E_{g}}$ such that
 $${{z}}_0\varpi_E\in E_{\pm g}^\times-N_{E_{g}/E_{\pm g}}({E^\times_{g}}).$$

 We only consider tamely ramified $\chi$-data and $\zeta$-data, i.e., we require that
 $$\chi_g|_{U^1_{E_{ g}}}\equiv 1\text{ and }\zeta_g|_{U^1_{E_{ g}}}\equiv 1.$$
 Therefore, the Definition \ref{chi-data condition}.(\ref{chi-data condition quadratic}) of $\chi$-data is explicitly (see \cite[(7.6)]{thesis})
 \begin{equation}\label{3 conditions to check chi in S-0 other k}
\chi_{g}({\boldsymbol{\mu}}_{E_\pm g}) = 1,\,\chi_{g}({{z}}_e^i{{z}}_{\phi^{f/2}}\varpi^2_E) = 1,\text{ and }\chi_{g}({{z}}_0\varpi_E) = -1.
\end{equation}
Hence, given a $\chi$-datum $\chi_g$, we can obtain a $\zeta$-datum $\zeta_g$ easily by requiring
$$\zeta_g|_{{\boldsymbol{\mu}}_{E_g}}=\chi_g|_{{\boldsymbol{\mu}}_{E_g}}\text{ and } \zeta_g(\varpi_E)=-\chi_g(\varpi_E).$$

In \cite[Section 7.4]{thesis}, in the cases when $\mathfrak{V}_{\xi,[g]}\cong \mathfrak{U}_{[g]}$ is non-trivial, we construct a $\chi$-datum
$$\chi_g|_{{\boldsymbol{\mu}}_{E_g}}=t^1_{{\boldsymbol{\mu}}_g}(\mathfrak{U}_{[g]})
\text{ and }\chi_g(\varpi_E)=
\begin{cases}
  -t_\varpi(\mathfrak{U}_{[g]}) &\text{if }{}^{\sigma^i\phi^{f/2}}\varpi_E=\varpi_E,
  \\
  t_\varpi(\mathfrak{U}_{[g]})&\text{otherwise}.
\end{cases}$$
In other words, the character $\chi_g$ satisfies the conditions in (\ref{3 conditions to check chi in S-0 other k}). Hence the character
\begin{equation*}\label{sym-unram zeta datum}
  \zeta_g|_{{\boldsymbol{\mu}}_{E_g}}=t^1_{{\boldsymbol{\mu}}_g}(\mathfrak{U}_{[g]})
\text{ and }\zeta_g(\varpi_E)=
\begin{cases}
  t_\varpi(\mathfrak{U}_{[g]}) &\text{if }{}^{\sigma^i\phi^{f/2}}\varpi_E=\varpi_E,
  \\
  -t_\varpi(\mathfrak{U}_{[g]})&\text{otherwise}
\end{cases}
\end{equation*}
is a $\zeta$-datum. This $\zeta$-datum will be used in the next section.

\subsection{Zeta-data associated to admissible characters}

Given an admissible character $\xi$ of $E^\times$ over $F$, we first assign, for each $[g]\in (\mathcal{W}_E\backslash \mathcal{W}_{F}/\mathcal{W}_E)_{\mathrm{asym}/\pm}$, the values of the $\zeta$-data to $$\zeta_{g,\xi}|_{{\boldsymbol{\mu}}_{E_{g}}}=\mathrm{sgn}_{{\boldsymbol{\mu}}_{E_{g}}}({}_A\mathfrak{V}_{[g]})\mathrm{sgn}_{{\boldsymbol{\mu}}_{E_{g}}}({}_M\mathfrak{V}_{[g]})=
t^1_{{\boldsymbol{\mu}}_{{g}}}({}_A\boldsymbol{\mathfrak{V}}_{\xi,[g]})t^1_{{\boldsymbol{\mu}}_{{g}}}({}_M\boldsymbol{\mathfrak{V}}_{\xi,[g]}).$$
In this way, the product of the characters
$$\zeta_{g,\xi}\zeta_{g^{-1},\xi}=\zeta_{g,\xi}\left(\zeta_{g,\xi}^{g}\right)^{-1}= \zeta_{g,\xi}\circ\left[\begin{smallmatrix}
1\\  g
\end{smallmatrix}\right]$$
has values
\begin{equation*}\label{value on mu-E for asymmetric}
\begin{split}
\left(\zeta_{g,\xi}\circ\left[\begin{smallmatrix}
1\\  g
\end{smallmatrix}\right]\right)|_{{\boldsymbol{\mu}}_E}({{z}})&=\mathrm{sgn}_{{\boldsymbol{\mu}}_{E}}({}_A\mathfrak{V}_{\xi,[g]})(
\left[\begin{smallmatrix}
1\\  g
\end{smallmatrix}\right]({{z}})
)\mathrm{sgn}_{{\boldsymbol{\mu}}_{E}}({}_M\mathfrak{V}_{\xi,[g]})(
\left[\begin{smallmatrix}
1\\  g
\end{smallmatrix}\right]({{z}})
)
\\
 &=t^1_{{\boldsymbol{\mu}}}({}_A\boldsymbol{\mathfrak{V}}_{\xi,[g]})t^1_{{\boldsymbol{\mu}}}({}_M\boldsymbol{\mathfrak{V}}_{\xi,[g]})
\end{split}
\end{equation*}
and
\begin{equation}\label{asymmetric varpi not depend}
\begin{split}
\left(\zeta_{g,\xi}\circ\left[\begin{smallmatrix}
1\\  g
\end{smallmatrix}\right]\right)(\varpi_E)& =\zeta_{g,\xi}|_{{\boldsymbol{\mu}}_E}\left(\left[\begin{smallmatrix}
1\\  g
\end{smallmatrix}\right](\varpi_E)\right)
\\
& = \text{sgn}_{\left[\begin{smallmatrix}
1\\  g
\end{smallmatrix}\right](\varpi_E)}({}_A\mathfrak{V}_{\xi,[g]})\text{sgn}_{\left[\begin{smallmatrix}
1\\  g
\end{smallmatrix}\right](\varpi_E)}({}_M\mathfrak{V}_{\xi,[g]})
\\
&=t^1_{\varpi}({}_A\boldsymbol{\mathfrak{V}}_{\xi,[g]})(\varpi_E)t^1_{\varpi}({}_M\boldsymbol{\mathfrak{V}}_{\xi,[g]})(\varpi_E)
\\
&= t_{\varpi}({}_A\boldsymbol{\mathfrak{V}}_{\xi,[g]})t_{\varpi}({}_M\boldsymbol{\mathfrak{V}}_{\xi,[g]}).
\end{split}
\end{equation}
We then assign, for each $[g]\in (\mathcal{W}_E\backslash \mathcal{W}_{F}/\mathcal{W}_E)_{\mathrm{asym}/\pm}$, arbitrary values to $\zeta_{g,\xi}(\varpi_E)$ and $\zeta_{g^{-1},\xi}(\varpi_E)$, as long as the product satisfies (\ref{asymmetric varpi not depend}). (This phenomenon is comparable to \cite[Lemma 3.3.A]{LS}, as explained in \cite[Remark 7.2]{thesis}.) It is routine to check that each $\zeta_{g,\xi}$ is a $\zeta$-datum. Indeed, this checking is exactly the same as that in the $\chi$-data case (see \cite[Section 7.2]{thesis}), since Definition \ref{chi-data condition}.(\ref{chi-data condition transition}) is the same as that of $\chi$-data.

We then assign values to the $\zeta$-data for each $[g]\in (\mathcal{W}_E\backslash \mathcal{W}_{F}/\mathcal{W}_E)_{\mathrm{sym}}$ case by case.

\subsubsection{Case when $f$ is odd}

Recall from (\ref{sym submod f odd}) that
\begin{equation}\label{sym-mod equal implies t-factors equal}
 t^i_{\boldsymbol{\Gamma}}({}_A\mathfrak{V}_{\xi,\mathrm{sym}})=t^i_{\boldsymbol{\Gamma}}({}_M\mathfrak{V}_{\xi,\mathrm{sym}})\text{, for }i=0,1\text{ and }\boldsymbol{\Gamma}={\boldsymbol{\mu}}, \varpi.
\end{equation}
We assign the $\zeta$-data to the following values. If $e$ is odd (so that $m$ is odd since $m$ divides $e$), we assign all $\zeta_{g,\xi}$ to be trivial. If $e$ is even, then we just take all $\zeta_{g,\xi}$, $[g]\neq [\sigma^{e/2}]$, to be trivial and
$$  \zeta_{\sigma^{e/2},\xi}|_{{\boldsymbol{\mu}}_E}\equiv 1\text{ and }   \zeta_{\sigma^{e/2},\xi}(\varpi_{E})=(-1)^m.$$
To show that $\zeta_{\sigma^{e/2},\xi}$ is a $\zeta$-datum, notice that since $N_{E/E_{\pm\sigma^{e/2}}}(\varpi_E)=-\varpi_{E_{\pm\sigma^{e/2}}}=-\varpi_E^2$, and since
$$\zeta_{\sigma^{e/2},\xi}(\varpi_E)^2=\zeta_{\sigma^{e/2},\xi}(-1)\zeta_{\sigma^{e/2},\xi}(-\varpi_E^2)=(1)(1)=1,$$
we can assign $\chi_{\sigma^{e/2},\xi}(\varpi_E)$ to either 1 or $-1$ to obtain a $\zeta$-datum. By (\ref{sym-mod equal implies t-factors equal}), we can rewrite our assigned $\zeta$-data as
\begin{equation}\label{zeta-data-e/2}
  \begin{split}
    &\zeta_{g,\xi}|_{{\boldsymbol{\mu}}_E}=t^1_{{\boldsymbol{\mu}}}({}_A\mathfrak{V}_{\xi,[g]})t^1_{{\boldsymbol{\mu}}}({}_M\mathfrak{V}_{\xi,[g]})
    \\
    &\zeta_{g,\xi}(\varpi_E)=\begin{cases}
    t_{\varpi}({}_A\mathfrak{V}_{\xi,[g]})t_{\varpi}({}_M\mathfrak{V}_{\xi,[g]}) &\text{ if }g\neq \sigma^{e/2},
    \\
  (-1)^m t_{\varpi}({}_A\mathfrak{V}_{\xi,[g]})t_{\varpi}({}_M\mathfrak{V}_{\xi,[g]})&\text{ if }g= \sigma^{e/2}.
\end{cases}
     \end{split}
\end{equation}
Note that $t_{\varpi}({}_A\mathfrak{V}_{\xi,[\sigma^{e/2}]})$ is not defined (see the paragraph containing Formula (\ref{define t1-mu-U-sigma{e/2}})). In fact, we just take $t_{\varpi}({}_A\mathfrak{V}_{\xi,[\sigma^{e/2}]})=t_{\varpi}({}_M\mathfrak{V}_{\xi,[\sigma^{e/2}]})=1$, since ${}_A\mathfrak{V}_{\xi,[\sigma^{e/2}]}\cong {}_M\mathfrak{V}_{\xi,[\sigma^{e/2}]}$ and it is shown in \cite[Proposition 5.3]{thesis} that ${}_M\mathfrak{V}_{\xi,[\sigma^{e/2}]}$ is always trivial.

The product of $\zeta$-data is equal to
\begin{equation*}\label{prod of zeta-data, f odd}
\begin{split}
  &\prod_{[g]\in (\mathcal{W}_E\backslash \mathcal{W}_F/\mathcal{W}_E)'}\zeta_{g,\xi}(\varpi_E)\equiv 1
  \\
    \text{ and }&\prod_{[g]\in (\mathcal{W}_E\backslash \mathcal{W}_F/\mathcal{W}_E)'}\zeta_{g,\xi}(\varpi_E)=
  \begin{cases}
  (-1)^m &\text{ if }e\text{ is even,}
  \\
  1 &\text{ if }e\text{ is odd,}
\end{cases}
\end{split}
  \end{equation*}
which is rewritten as
\begin{equation*}
\begin{split}
  &\prod_{[g]\in (\mathcal{W}_E\backslash \mathcal{W}_F/\mathcal{W}_E)'}\zeta_{g,\xi}|_{{\boldsymbol{\mu}}_E}\equiv t^1_{{\boldsymbol{\mu}}}({}_A\mathfrak{V}_{\xi})t^1_{{\boldsymbol{\mu}}}({}_M\mathfrak{V}_{\xi})
  \\
    \text{ and }&\prod_{[g]\in (\mathcal{W}_E\backslash \mathcal{W}_F/\mathcal{W}_E)'}\zeta_{g,\xi}(\varpi_E)=
  \begin{cases}
  (-1)^mt_{\varpi}({}_A\mathfrak{V}_{\xi})t_{\varpi}({}_M\mathfrak{V}_{\xi}) &\text{ if }e\text{ is even,}
  \\
  t_{\varpi}({}_A\mathfrak{V}_{\xi})t_{\varpi}({}_M\mathfrak{V}_{\xi}) &\text{ if }e\text{ is odd.}
\end{cases}
\end{split}
\end{equation*}
The product is equal to the rectifier given in (\ref{value of rectifier, sim alg}),
$${}_D\nu_\xi|_{{\boldsymbol{\mu}}_E}\equiv 1\text{ and }{}_D\nu_\xi(\varpi_E)=(-1)^{m(d-1)}=(-1)^{e-m},$$
  when $E/F$ is totally ramified.

\subsubsection{Case when $f$ is even}\label{subsection f even}

Let $K$ be the maximal unramified extension of $E/F$. If we define ${}_{D_K}\nu_\xi$ to be the ramified part of ${}_{D}\nu_\xi$, which is also the rectifier corresponding to the admissible pair $(E/K,\xi)$, then we have
$${}_{D_K}\nu_\xi=\prod_{[g]\in (\mathcal{W}_E \backslash \mathcal{W}_{K}/\mathcal{W}_E)'}\zeta_{g,\xi}|_{E^\times},$$
as when $f$ is odd, and in particular
$${}_{D_K}\nu_\xi(\varpi_E)=(-1)^{e-m_K},$$
where $m_K=\gcd(e,m)$. Therefore, our plan is to distribute the sign
\begin{equation}\label{sim-alg-sign-sym-unram}
  (-1)^{e-m_K}(-1)^{{n-m+f_\varpi-m_\varpi}}=
  \begin{cases}
    1 &\text{if }m\text{ is even,}
    \\
    (-1)^{e+f_\varpi+1}&\text{if }m\text{ is odd,}
  \end{cases}
\end{equation}
 to each $\zeta_{g,\xi}(\varpi_E)$, where $[g]$ is symmetric unramified, multiplying the product of t-factors $t_{\varpi}({}_A\mathfrak{V}_{\xi,[g]})t_{\varpi}({}_M\mathfrak{V}_{\xi,[g]})$. As before, we separate the cases according to the parity of $m$.

When $m$ is even, recall from Proposition \ref{sym mod} that we have either
\begin{equation*}
\text{both }{}_A\mathfrak{V}_{\xi,k,\mathrm{sym}}\text{ and }{}_M\mathfrak{V}_{\xi,k,\mathrm{sym}}\text{ are trivial,}
\end{equation*}
or
\begin{equation*}
\text{both }{}_A\mathfrak{V}_{\xi,k,\mathrm{sym}}\text{ and }{}_M\mathfrak{V}_{\xi,k,\mathrm{sym}}\text{ are isomorphic to } \mathfrak{U}_{k,\mathrm{sym}}.
\end{equation*}
 We assign the trivial $\zeta$-data for all  $[g]\in (\mathcal{W}_E\backslash \mathcal{W}_F/\mathcal{W}_E)_{{\mathrm{sym-unram}}}$, so that
\begin{equation*}
 \zeta_{g,\xi}|_{{\boldsymbol{\mu}}_E}=t^1_{{\boldsymbol{\mu}}}({}_A\mathfrak{V}_{\xi,[g]})t^1_{{\boldsymbol{\mu}}}({}_M\mathfrak{V}_{\xi,[g]})\text{ and }\zeta_{g,\xi}(\varpi_E)=
    t_{\varpi}({}_A\mathfrak{V}_{\xi,[g]})t_{\varpi}({}_M\mathfrak{V}_{\xi,[g]}).
  \end{equation*}
 The product of $\zeta_{g,\xi}(\varpi_E)$ is trivial, or we can write
 \begin{equation*}
 \begin{split}
 &\prod_{[g]\in (\mathcal{W}_E\backslash \mathcal{W}_F/\mathcal{W}_E)'}\zeta_{g,\xi}|_{{\boldsymbol{\mu}}_E}=t^1_{{\boldsymbol{\mu}}}({}_A\mathfrak{V}_{\xi})t^1_{{\boldsymbol{\mu}}}({}_M\mathfrak{V}_{\xi})
 \\
 \text{ and }
 &\prod_{[g]\in (\mathcal{W}_E\backslash \mathcal{W}_F/\mathcal{W}_E)'}\zeta_{g,\xi}(\varpi_E)=
    t_{\varpi}({}_A\mathfrak{V}_{\xi})t_{\varpi}({}_M\mathfrak{V}_{\xi}).
 \end{split}
  \end{equation*}
Note that in the second product, the sign without t-factors is equal to (\ref{sim-alg-sign-sym-unram}), which is just 1.

When $m$ is odd, we have
  $${}_A\mathfrak{V}_{\xi,{\mathrm{sym-unram}}}\oplus{}_M\mathfrak{V}_{\xi,{\mathrm{sym-unram}}}=\mathfrak{U}_{{\mathrm{sym-unram}}}.$$
  We then assign the $\zeta$-data to be
      \begin{equation*}
      \begin{split}
 & \zeta_{g,\xi}|_{{\boldsymbol{\mu}}_E}=t^1_{{\boldsymbol{\mu}}}({}_A\mathfrak{V}_{\xi,[g]})t^1_{{\boldsymbol{\mu}}}({}_M\mathfrak{V}_{\xi,[g]})=t^1_{{\boldsymbol{\mu}}}(\mathfrak{U}_{[g]})
 \\
 \text{ and }&\zeta_{g,\xi}(\varpi_E)=
    -t_{\varpi}({}_A\mathfrak{V}_{\xi,[g]})t_{\varpi}({}_M\mathfrak{V}_{\xi,[g]})=-t_{\varpi}(\mathfrak{V}_{[g]}).
      \end{split}
   \end{equation*}
     for all symmetric unramified $[g]$ except the one which stabilizes $\varpi_E$, in which we assign
\begin{equation*}
      \begin{split}
 & \zeta_{g,\xi}|_{{\boldsymbol{\mu}}_E}=t^1_{{\boldsymbol{\mu}}}({}_A\mathfrak{V}_{\xi,[g]})t^1_{{\boldsymbol{\mu}}}({}_M\mathfrak{V}_{\xi,[g]})=t^1_{{\boldsymbol{\mu}}}(\mathfrak{U}_{[g]})
 \\
 \text{ and }&\zeta_{g,\xi}(\varpi_E)=
    t_{\varpi}({}_A\mathfrak{V}_{\xi,[g]})t_{\varpi}({}_M\mathfrak{V}_{\xi,[g]})=t_{\varpi}(\mathfrak{V}_{[g]}).
      \end{split}
   \end{equation*}
     In Section \ref{section Symmetric unramified zeta-data}, we checked that the above characters give rise to $\zeta$-data. The product of $\zeta$ is hence
   \begin{equation*}
 \begin{split}
 &\prod_{[g]\in (\mathcal{W}_E\backslash \mathcal{W}_F/\mathcal{W}_E)'}\zeta_{g,\xi}|_{{\boldsymbol{\mu}}_E}=t^1_{{\boldsymbol{\mu}}}({}_A\mathfrak{V}_{\xi})t^1_{{\boldsymbol{\mu}}}({}_M\mathfrak{V}_{\xi})
 \\
 \text{ and }
 &\prod_{[g]\in (\mathcal{W}_E\backslash \mathcal{W}_F/\mathcal{W}_E)'}\zeta_{g,\xi}(\varpi_E)=
    (-1)^{e+f_\varpi+1}t_{\varpi}({}_A\mathfrak{V}_{\xi})t_{\varpi}({}_M\mathfrak{V}_{\xi}),
 \end{split}
  \end{equation*}
      by Proposition \ref{parity sym unram not fixing varpiE}. Again in the second product, the sign without t-factors is equal to (\ref{sim-alg-sign-sym-unram}).

\subsection{The main theorem}

To summarize, we verified the following theorem.
\begin{thm}\label{zeta-data factor of BH-rectifier}
Let $\xi$ be an admissible character of $E^\times$ over $F$.
 \begin{enumerate}[(i)]
\item  Let ${}_A\mathfrak{V}_\xi$ (resp. ${}_M\mathfrak{V}_\xi$) be the finite symplectic module defined by $\xi$ when $G(F)={\mathrm{GL}}_m(D)$ (resp. when $G^*(F)={\mathrm{GL}}_n(F)$). The following conditions define a collection of $\zeta$-data $\{\zeta_{g,\xi}\}_{g\in\mathcal{D}_{\mathrm{asym}/\pm}\sqcup \mathcal{D}_{\mathrm{sym}}}$.
 \begin{enumerate}
 \item All $\zeta_{g,\xi}$ are tamely ramified.
 \item If $g\in\mathcal{D}_{\mathrm{asym}/\pm}$, then
 \begin{equation*}
\zeta_{g,\xi}|_{{\boldsymbol{\mu}}_{E_{g}}}=t^1_{{\boldsymbol{\mu}}_{{g}}}({}_A\boldsymbol{\mathfrak{V}}_{\xi,[g]})t^1_{{\boldsymbol{\mu}}_{{g}}}({}_M\boldsymbol{\mathfrak{V}}_{\xi,[g]})
\end{equation*}
and
\begin{equation*}
\begin{split}
&\zeta_{g,\xi}(\varpi_E)\text{ can be any value satisfying}
\\
&\zeta_{g,\xi}(\varpi_E)\zeta_{g^{-1},\xi}(\varpi_E)= t_{\varpi}({}_A\boldsymbol{\mathfrak{V}}_{\xi,[g]})t_{\varpi}({}_M\boldsymbol{\mathfrak{V}}_{\xi,[g]}).
\end{split}
\end{equation*}
 \item If $g\in\mathcal{D}_{\mathrm{sym}}$, then \begin{equation*}
\zeta_{g,\xi}|_{{\boldsymbol{\mu}}_{E_g}}= t^1_{{\boldsymbol{\mu}}_g}({}_A\mathfrak{V}_{\xi,[g]})t^1_{{\boldsymbol{\mu}}_g}({}_M\mathfrak{V}_{\xi,[g]})
 \end{equation*}
  and
  \begin{equation*}
\zeta_{g,\xi}(\varpi_E)=\epsilon_g t_\varpi({}_A\mathfrak{V}_{\xi,[g]})t_\varpi({}_M\mathfrak{V}_{\xi,[g]}),
  \end{equation*}
  where $\epsilon_g$ is equal to $1$ if $g\in(\mathcal{D}_{{\mathrm{sym-ram}}}-\{\sigma^{e/2}\})\cup \mathcal{W}_{F[\varpi_E]}$ and is equal to $(-1)^m$ if $g\in(\mathcal{D}_{{\mathrm{sym-unram}}}- \mathcal{W}_{F[\varpi_E]})\cup\{\sigma^{e/2}\}$.
    \label{theorem sym case}
\end{enumerate}\label{theorem character}
\item Let ${}_D\nu_\xi$ be the rectifier of $\xi$ and  $\{\zeta_{g,\xi}\}_{g\in\mathcal{D}_\pm}$ be the $\zeta$-data in (\ref{theorem character}), then $${}_D\nu_\xi=\prod_{[g]\in (\mathcal{W}_E \backslash \mathcal{W}_{F}/\mathcal{W}_E)'}\zeta_{g,\xi}|_{E^\times}.$$
\end{enumerate}
\qed\end{thm}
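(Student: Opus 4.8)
The plan is to establish the two assertions in turn, reducing each to a comparison of character values on the topological generators of $E^\times$. Since ${}_D\nu_\xi$ is tamely ramified and depends only on the wild part of $\xi$ (Section \ref{section The correspondences}), and since every $\zeta_{g,\xi}$ produced in (i) is tamely ramified by construction, the identity asserted in (ii) is an identity of tamely ramified characters of $E^\times$; as $E^\times=\langle\varpi_E\rangle\times\boldsymbol{\mu}_E\times U_E^1$ and $U_E^1$ is killed by every character in sight, it will suffice to compare both sides on $\boldsymbol{\mu}_E$ and on $\varpi_E$.

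For (i), I would run through Definition \ref{zeta-data condition}. The transitivity relations $\zeta_{-\lambda}=\zeta_\lambda^{-1}$ and $\zeta_{{}^w\lambda}={}^w\zeta_\lambda$ are formally identical to condition (\ref{chi-data condition transition}) of Definition \ref{chi-data condition}, so their verification is word-for-word the one already carried out for $\chi$-data in \cite[Section 7.2]{thesis}, which I would simply invoke. The remaining condition --- triviality of $\zeta_\lambda$ on $E_{\pm\lambda}^\times$ when $\lambda$ is symmetric --- is vacuous in the asymmetric case; for symmetric ramified $[g]\neq[\sigma^{e/2}]$ it follows from the triviality of $t^1_{\boldsymbol{\mu}_g}(\mathfrak{U}_{[g]})$ on $\boldsymbol{\mu}_{E_{\pm g}}$ (Proposition \ref{summary of t-factors}(ii)), the value $\epsilon_g=1$, and the fact that $N_{E_g/E_{\pm g}}(\varpi_E)$ lies in the pertinent norm subgroup; for $[g]=[\sigma^{e/2}]$ it follows, as computed in the excerpt, from $N_{E/E_{\pm\sigma^{e/2}}}(\varpi_E)=-\varpi_E^2$ and $\zeta_{\sigma^{e/2},\xi}(-1)=\zeta_{\sigma^{e/2},\xi}(-\varpi_E^2)=1$, whence $\zeta_{\sigma^{e/2},\xi}(\varpi_E)^2=1$ and either sign is admissible; and for symmetric unramified $[g]$ it is precisely the computation of Section \ref{section Symmetric unramified zeta-data}, where $\zeta_g$ is obtained from the $\chi$-datum of \cite[Section 7.4]{thesis} satisfying (\ref{3 conditions to check chi in S-0 other k}) by negating the value at $\varpi_E$.

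For (ii), the crucial structural input is that the complete symmetric decomposition (\ref{complete decomposition-V}) of ${}_A\mathfrak{V}_\xi$, and likewise of ${}_M\mathfrak{V}_\xi$, is orthogonal, by Proposition \ref{complete decomp orthogonal}; hence the t-factors are multiplicative over the components $\boldsymbol{\mathfrak{V}}_{\xi,[g]}$. Multiplying the defining formulas from (i), the $\boldsymbol{\mu}_E$-part of $\prod_{[g]}\zeta_{g,\xi}|_{E^\times}$ becomes $\prod_{[g]}t^1_{\boldsymbol{\mu}}({}_A\boldsymbol{\mathfrak{V}}_{\xi,[g]})\,t^1_{\boldsymbol{\mu}}({}_M\boldsymbol{\mathfrak{V}}_{\xi,[g]})=t^1_{\boldsymbol{\mu}}({}_A\mathfrak{V}_\xi)\,t^1_{\boldsymbol{\mu}}({}_M\mathfrak{V}_\xi)$, which is exactly ${}_D\nu_\xi|_{\boldsymbol{\mu}_E}$ by (\ref{value of rectifier, sim alg}). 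At $\varpi_E$ the same multiplicativity gives $\prod_{[g]}\zeta_{g,\xi}(\varpi_E)=S\cdot t_\varpi({}_A\mathfrak{V}_\xi)\,t_\varpi({}_M\mathfrak{V}_\xi)$ with $S:=\prod_{[g]}\epsilon_g$ the accumulated sign, the asymmetric orbits contributing trivially by (\ref{asymmetric varpi not depend}) regardless of the free choices made there; so (ii) will follow from (\ref{value of rectifier, sim alg}) once I establish $S=(-1)^{n-m+f_\varpi-m_\varpi}$.

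The identification of $S$ is where I expect the real effort, and I would organize it along the case division already set up: $f$ odd; $f$ and $m$ both even; and $f$ even with $m$ odd. In the first two cases ${}_A\mathfrak{V}_{\xi,\mathrm{sym-unram}}\cong{}_M\mathfrak{V}_{\xi,\mathrm{sym-unram}}$ by Proposition \ref{sym mod}, almost every $\epsilon_g$ equals $1$, and the residual sign (coming at worst from $[\sigma^{e/2}]$) is read off directly. In the last, decisive case ${}_A\mathfrak{V}_{\xi,\mathrm{sym-unram}}$ and ${}_M\mathfrak{V}_{\xi,\mathrm{sym-unram}}$ are complementary in $\mathfrak{U}_{\mathrm{sym-unram}}$, so $S$ is $(-1)^m$ raised to the number of symmetric unramified double cosets not fixing $\varpi_E$, and the argument is completed by Proposition \ref{parity sym unram not fixing varpiE} --- this count has the parity of $e+f_\varpi-1$ --- together with Proposition \ref{parity of double coset with i=f/2} for the total count, the identities $n=ef$ and $m_\varpi=\gcd(m,f_\varpi)$, and a short parity comparison of $e+f_\varpi-1$ with $n-m+f_\varpi-m_\varpi$ modulo $2$. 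The main obstacle is thus purely combinatorial: synchronizing the bookkeeping of the signs $\epsilon_g$ over the symmetric unramified cosets with the exponent $n-m+f_\varpi-m_\varpi$ furnished by the Second Comparison Theorem in (\ref{value of rectifier, sim alg}), across the interlocking parities of $e$, $f$, $m$, and $f_\varpi$.
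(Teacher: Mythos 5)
Your proposal is correct and follows essentially the same route as the paper: the paper's "proof" is precisely the case-by-case construction and verification spread over Sections \ref{section Symmetric unramified zeta-data}--\ref{subsection f even} (checking the $\zeta$-data conditions as in the $\chi$-data case, then comparing values on ${\boldsymbol{\mu}}_E$ and at $\varpi_E$ with (\ref{value of rectifier, sim alg}) via orthogonality/multiplicativity of t-factors, Proposition \ref{sym mod}, and the parity statements, Propositions \ref{parity of double coset with i=f/2} and \ref{parity sym unram not fixing varpiE}). Your reduction of (ii) to the sign identity $\prod_g\epsilon_g=(-1)^{n-m+f_\varpi-m_\varpi}$ and its case analysis on the parities of $f$ and $m$ is exactly the paper's bookkeeping, only phrased slightly differently (the paper routes it through the intermediate sign (\ref{sim-alg-sign-sym-unram}), and for symmetric ramified classes it simply notes the $A$- and $M$-factors cancel so the datum is trivial).
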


\begin{rmk}
As long as the $F$-dimension of the division algebra $D$ is fixed, the rectifier ${}_D\nu\xi$ is independent of the Hasse-invariant $h=h(D)$ of $D$, as stated in \cite[Theorem C]{BH-JLC}. This is because the modules ${}_A\mathfrak{V}_\xi$, where $A=\mathrm{Mat}_n(D)$ and $D$ ranges over all division algebra with same $F$-dimension, are all isomorphic to each other. Similarly, the $\zeta$-data $\{\zeta_{g,\xi}\}$ are independent of $h(D)$.
  \qed\end{rmk}

\subsection{Functorial property}

Let $K$ be an intermediate subfield in $E/F$, and write
$$n_K=n(E/K)=f_Ke_K=f(E/K)e(E/K)\text{ and }m_K=\gcd(m,n_K).$$
 Similar to Section \ref{section The correspondences}, we have the Jacquet-Langlands correspondence
$$JL_K:\mathcal{A}^\mathrm{et}_{n_K}(K)\rightarrow\mathcal{A}^\mathrm{et}_{m_K}(D_K),$$
between essentially tame supercuspidal representations of $G(F)_K={\mathrm{GL}}_{m_K}(D_K)$ and its split inner form $G^*(F)_K={\mathrm{GL}}_{n_K}(K)$. We can parametrize both collections by the admissible pairs in $P_{n_K}(K)$, and obtain the rectifier map
\begin{equation*}
{}_{D_K}\nu: P_{n_K}(K) \xrightarrow{{}_K\Pi} \mathcal{A}^\mathrm{et}_{n_K}(K) \xrightarrow{JL_K} \mathcal{A}^\mathrm{et}_{m_K}(D_K)\xrightarrow{{}_{D_K}\Pi^{-1}} P_{n_K}(K),
\end{equation*}
such that
\begin{equation*}
{}_{D_K}\nu(E/K, \xi) = (E/F, \xi\cdot {}_{D_K}\nu_\xi).
\end{equation*}
for a tamely ramified character ${}_{D_K}\nu_\xi$ of $E^\times$ for each pair $(E/K, \xi)\in P_{n_K}(K)$.

 With the embedding condition for $(E_0K,\mathfrak{A}_K)$ as discussed in Section \ref{subsection An embedding condition}, we define the subgroups (see also \cite[3.2 Proposition]{BH-JLC})
\begin{equation*}
  \begin{split}
    H^1_K=H^1(\Xi,\mathfrak{A})\cap G(F)_K\text{ and }J^1_K=J^1(\Xi,\mathfrak{A})\cap G(F)_K
  \end{split}
\end{equation*}
Each subgroup above admits a similar factorization as in (\ref{group H(xi) J(xi)}). We then obtain
 \begin{equation*}
  {}_{A_K}\mathfrak{V}_{\xi}=J^1_K/H^1_K={}_{A}\mathfrak{V}_{\xi}\cap \mathfrak{U}_{E/K}
\end{equation*}
and similarly for ${}_{M_K}\mathfrak{V}_{\xi}$.

Denote $\Psi_{E/K}=E^\times/K^\times U_E^1$, and view ${}_{A_K}\mathfrak{V}_{\xi}$ and ${}_{M_K}\mathfrak{V}_{\xi}$
 as $\mathbf{k}_K\Psi_{E/K}$-submodules of $\mathfrak{U}_{E/K}$. Denote the subgroups of $\Psi_{E/K}$ by
$${\boldsymbol{\mu}}_{E/K}={\boldsymbol{\mu}}_E/{\boldsymbol{\mu}}_K\text{ and }\varpi_{E/K}=\text{the subgroup generated by the image of }\varpi_E.$$
Using the results in Section \ref{section Values of rectifiers}, with the base field changed from $F$ to $K$, the values of ${}_{D_K}\nu_\xi$ is given by
\begin{equation*}
  \begin{split}
    &{}_{D_K}\nu_\xi|_{{\boldsymbol{\mu}}_E}=t^1_{{\boldsymbol{\mu}}_{E/K}}({}_{A_K}\mathfrak{V}_{\xi})t^1_{{\boldsymbol{\mu}}_{E/K}}({}_{M_K}\mathfrak{V}_{\xi})
    \\
    \text{ and }&{}_{D_K}\nu_\xi(\varpi_E)=(-1)^{n_K-m_K+f_{{\varpi,K}}-m_{{\varpi,K}}}t_{\varpi_{E/K}}({}_{A_K}\mathfrak{V}_{\xi})t_{\varpi_{E/K}}({}_{M_K}\mathfrak{V}_{\xi})
     \end{split}
\end{equation*}
for a prime element $\varpi_E\in E_0K$ (see the beginning of Section \ref{section Values of rectifiers}). Here
$$f_{{\varpi,K}}=f(E/K[\varpi_E])\text{ and }m_{{\varpi,K}}=\gcd(m_K,f_{{\varpi,K}})=\gcd(m,n_K,f(E/K[\varpi_E])).$$

Now suppose that $(E/F,\xi)\in P_n(F)$. By the definition of admissibility, we can regard $\xi$ as an admissible character over $K$ and form the pair $(E/K,\xi)\in P_{n_K}(K)$.
\begin{prop}\label{rectifier decomp over K}
  In this situation, we have
  $${}_{D_K}\nu_\xi=\prod_{[g]\in (\mathcal{W}_E \backslash \mathcal{W}_{K}/\mathcal{W}_E)'}\zeta_{g,\xi}|_{E^\times}.$$
\end{prop}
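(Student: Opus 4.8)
The plan is to reduce the statement over $K$ to the already-established main theorem (Theorem \ref{zeta-data factor of BH-rectifier}) applied with base field $K$ in place of $F$, and then identify the $\zeta$-data for $(E/K,\xi)$ with the appropriate restriction of the $\zeta$-data for $(E/F,\xi)$. The key observation is that the double coset space $(\mathcal{W}_E\backslash\mathcal{W}_K/\mathcal{W}_E)'$ sits inside $(\mathcal{W}_E\backslash\mathcal{W}_F/\mathcal{W}_E)'$ as exactly those $[g]$ with a representative in $\Gamma_K$, equivalently (by Proposition \ref{orbit of roots as double coset}) those roots $\lambda$ with $E_\lambda\supseteq K$; and by the admissibility conventions in Section \ref{section Admissible characters}, the pair $(E/K,\xi)$ has the same Howe factorization data as $(E/F,\xi)$ except possibly at the bottom (the fields $E_k$ for $k$ small collapse once they lie below $K$), so the jump data relevant to these double cosets is unchanged. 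First I would record, using the displayed computation in Section \ref{subsection f even} (which already treats the change of base field from $F$ to $K$), that the rectifier over $K$ is given by
$${}_{D_K}\nu_\xi|_{{\boldsymbol{\mu}}_E}=t^1_{{\boldsymbol{\mu}}_{E/K}}({}_{A_K}\mathfrak{V}_{\xi})t^1_{{\boldsymbol{\mu}}_{E/K}}({}_{M_K}\mathfrak{V}_{\xi})$$
and analogously at $\varpi_E$. Then the main theorem applied over $K$ gives a set of $\zeta$-data $\{\zeta'_{g,\xi}\}_{g\in\mathcal{D}^K_{\mathrm{asym}/\pm}\sqcup\mathcal{D}^K_{\mathrm{sym}}}$, indexed by representatives of $(\mathcal{W}_E\backslash\mathcal{W}_K/\mathcal{W}_E)'$, whose product restricted to $E^\times$ equals ${}_{D_K}\nu_\xi$.

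The heart of the matter is to check that $\zeta'_{g,\xi}=\zeta_{g,\xi}$ for every $[g]\in(\mathcal{W}_E\backslash\mathcal{W}_K/\mathcal{W}_E)'$. For this I would compare the defining formulas in Theorem \ref{zeta-data factor of BH-rectifier}(\ref{theorem character}) term by term. On the $\mu$-part, both $\zeta_{g,\xi}|_{{\boldsymbol{\mu}}_{E_g}}$ and $\zeta'_{g,\xi}|_{{\boldsymbol{\mu}}_{E_g}}$ are products of the two extended t-factors $t^1_{{\boldsymbol{\mu}}_g}({}_A\boldsymbol{\mathfrak{V}}_{\xi,[g]})$ and $t^1_{{\boldsymbol{\mu}}_g}({}_M\boldsymbol{\mathfrak{V}}_{\xi,[g]})$ — note these are intrinsically attached to the isotypic component $\boldsymbol{\mathfrak{U}}_{[g]}$ and to $\xi$, and do not reference the base field, since ${}_{A_K}\mathfrak{V}_\xi={}_A\mathfrak{V}_\xi\cap\mathfrak{U}_{E/K}$ and the $[g]$-isotypic pieces of the two ambient modules coincide for such $[g]$. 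On the $\varpi_E$-part, for $g$ asymmetric one has the same freedom of choice constrained by the same product formula \eqref{asymmetric varpi not depend}, and one simply makes the choices compatibly; for $g$ symmetric one must match the sign $\epsilon_g$. Here the point is that the symmetry type (ramified versus unramified) of $[g]$ as a double coset in $\Gamma_E\backslash\Gamma_F/\Gamma_E$ agrees with its type in $\Gamma_E\backslash\Gamma_K/\Gamma_E$ — being unramified means a representative of the form $\sigma^i\phi^{f/2}$, and $f=f(E/F)$, $f_K=f(E/K)$ differ, so some care is needed — and that whether $g$ fixes $\varpi_E$, which is what controls the value of $\epsilon_g$ through Lemma \ref{exists root fixing varpi} and Proposition \ref{parity sym unram not fixing varpiE}, is likewise a statement about the action of $g$ on $\varpi_E$ that is independent of the ground field.

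I expect the main obstacle to be precisely this bookkeeping of symmetric double cosets across the change of ground field: a coset $[g]$ that is symmetric unramified relative to $F$ (representative $\sigma^i\phi^{f/2}$) need not have $f/2\equiv 0\bmod f_K$, so its role when we regard it inside $\mathcal{W}_K$ must be re-examined, and conversely a coset symmetric relative to $K$ could fail to be so relative to $F$ only if $-\lambda$ becomes $\Gamma_K$-conjugate to $\lambda$ where it was not $\Gamma_F$-conjugate, which cannot happen since $\Gamma_K\subseteq\Gamma_F$ — so that direction is harmless, but the parity count of $\#(\mathcal{W}_E\backslash\mathcal{W}_K/\mathcal{W}_E)_{\mathrm{sym-unram}}$ versus its $F$-analogue, and the interaction with $f_{\varpi,K}=f(E/K[\varpi_E])$, has to be run through Proposition \ref{parity sym unram not fixing varpiE} with $F$ replaced by $K$ and checked to reproduce the sign $(-1)^{e+f_{\varpi,K}+1}$ appearing in \eqref{sim-alg-sign-sym-unram} over $K$. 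Once the $\epsilon_g$'s are seen to match, multiplicativity of the t-factors (Section \ref{section Invariants of finite symplectic modules}) together with the decomposition $${}_{D_K}\nu_\xi=\prod_{[g]\in(\mathcal{W}_E\backslash\mathcal{W}_K/\mathcal{W}_E)'}\zeta'_{g,\xi}|_{E^\times}=\prod_{[g]\in(\mathcal{W}_E\backslash\mathcal{W}_K/\mathcal{W}_E)'}\zeta_{g,\xi}|_{E^\times}$$ finishes the proof.
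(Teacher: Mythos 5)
Your plan is workable, but it is structurally different from the paper's argument, and it leaves one substantive item unaddressed. The paper does \emph{not} re-run Theorem \ref{zeta-data factor of BH-rectifier} over the base field $K$ and match $\zeta$-data term by term. Instead it compares values of the two sides directly: the restriction to ${\boldsymbol{\mu}}_E$ matches at once because $t^1_{{\boldsymbol{\mu}}_{E/K}}(\mathfrak{V})=t^1_{{\boldsymbol{\mu}}_{E/F}}(\mathfrak{V})$ and $t_{\varpi_{E/K}}(\mathfrak{V})=t_{\varpi_{E/F}}(\mathfrak{V})$ for submodules $\mathfrak{V}$ of $\mathfrak{U}_{E/K}$; and at $\varpi_E$ it compares the explicit sign $(-1)^{n_K-m_K+f_{\varpi,K}-m_{\varpi,K}}$ from the Comparison Theorems over $K$ with the product of the signs $\epsilon_g$ of the \emph{fixed} $F$-level $\zeta$-data taken over $[g]\in(\mathcal{W}_E\backslash\mathcal{W}_K/\mathcal{W}_E)'$, case by case ($f_K$ odd with $e_K$ odd or even; $f_K$ even with $m_K$ even or odd), using Proposition \ref{parity sym unram not fixing varpiE} and the congruence $m_K\equiv m \bmod 2$. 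Only the equality of products is needed; no identification of individual data is made. Your route, by contrast, appeals to the theorem with base field $K$ as a black box and then asserts $\zeta'_{g,\xi}=\zeta_{g,\xi}$ for each $[g]$, which is a finer statement; its payoff is that it makes the functoriality conceptually transparent, but it forces you to verify that every ingredient of the recipe is base-field independent.

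The concrete gap in that finer step is the sign $\epsilon_g$: applied over $K$, the theorem produces $\epsilon'_g\in\{1,(-1)^{m_K}\}$, not $\{1,(-1)^{m}\}$, so term-by-term matching needs $(-1)^{m_K}=(-1)^m$ whenever a symmetric unramified coset, or the exceptional coset $[\sigma^{e/2}]$, lies in $\Gamma_K$. This is true, but for a reason you never state: in those situations $f_K$ (respectively $e_K$) is forced to be even, hence $n_K$ is even and $m_K=\gcd(m,n_K)\equiv m\bmod 2$ --- exactly the congruence the paper invokes. You correctly flag the ramified/unramified bookkeeping (which is harmless, since for $g\in\Gamma_K$ the type of $[g]$ and the condition ${}^g\varpi_E=\varpi_E$ are intrinsic), but without the $m$ versus $m_K$ comparison, and the identification of $[\sigma^{e/2}]$ with the exceptional coset over $K$ when it lies in $\Gamma_K$, the term-by-term identification is not yet justified; once you add these checks (and choose the asymmetric values at $\varpi_E$ compatibly, as you note), your argument closes and reproves the proposition by a somewhat longer path than the paper's direct product-value comparison.
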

\proof
Notice that if $\mathfrak{V}$ is a $\mathbf{k}_F\Psi_{E/F}$-submodule of $\mathfrak{U}_{E/K}$, we have
$$t^1_{{\boldsymbol{\mu}}_{E/K}}(\mathfrak{V})=t^1_{{\boldsymbol{\mu}}_{E/F}}(\mathfrak{V})\text{ and }t_{\varpi_{E/K}}(\mathfrak{V})=t_{\varpi_{E/F}}(\mathfrak{V}),$$
where ${\boldsymbol{\mu}}_{E/F}$ and $\varpi_{E/F}$ are just ${\boldsymbol{\mu}}$ and $\varpi$ respectively considered in (\ref{finite cyclic subgp}). Hence we have
$${}_{D_K}\nu_\xi|_{{\boldsymbol{\mu}}_E}=\prod_{[g]\in (\mathcal{W}_E \backslash \mathcal{W}_{K}/\mathcal{W}_E)'}\zeta_{g,\xi}|_{{\boldsymbol{\mu}}_E}.$$
It remains to consider the values of both characters at $\varpi_E$. Notice that the left side has value
$$(-1)^{n_K-m_K+f_{{\varpi,K}}-m_{{\varpi,K}}}t_{\varpi_{E/K}}({}_{A_K}\mathfrak{V}_{\xi})t_{\varpi_{E/K}}({}_{M_K}\mathfrak{V}_{\xi}),$$
while the right side has value
$$(\text{a sign})\cdot \prod_{[g]\in (\mathcal{W}_E \backslash \mathcal{W}_{K}/\mathcal{W}_E)'}t_{\varpi}({}_{A_K}\mathfrak{V}_{\xi,[g]})t_{\varpi}({}_{M_K}\mathfrak{V}_{\xi,[g]}).$$
The t-factors on both sides are clearly equal. We will recall, by Theorem \ref{zeta-data factor of BH-rectifier}, the values of the sign on the right side in different cases and show that, in each case, this sign is equal to the one on the left side.
\\\\
We first consider when $f_K$ is odd, which can be reduced to the case when $E/K$ is totally ramified. We further separate into cases.
\begin{itemize}
  \item When $e$ is odd, or when $e$ is even and $e_K$ is odd, then $m_K$ is also odd. The sign on the left is $(-1)^{e_K-m_K}=1$, while that on the right is also 1 since $\sigma^{e/2}\notin \mathcal{W}_K$.
  \item When $e_K$ is even, the sign on the left is $(-1)^{e_K-m_K}=(-1)^m$ since $m_K\equiv m\mod 2$, while that on the right is $(-1)^m$ since $\sigma^{e/2}\in \mathcal{W}_K$.
\end{itemize}
We then consider then $f_K$ is even. Let $M$ be the maximal unramified extension of $E/K$. We recall, after disregarding the symmetric ramified component (as we did at the beginning of Sub-section \ref{subsection f even}), the sign on the left is equal to (see (\ref{sim-alg-sign-sym-unram}))
\begin{equation*}
  (-1)^{e_K-m_M}(-1)^{{n_K-m_K+f_{{\varpi,K}}-m_{{\varpi,K}}}}=
  \begin{cases}
    1 &\text{if }m_K\text{ is even,}
    \\
    (-1)^{e_K+f_{{\varpi,K}}+1}&\text{if }m_K\text{ is odd.}
  \end{cases}
\end{equation*}
Recall from Proposition \ref{parity sym unram not fixing varpiE} that the number $e_K+f_{{\varpi,K}}+1$ is just the cardinality of
$$(\Gamma_E\backslash \Gamma_K /\Gamma_E)_{\mathrm{sym-unram}}- \Gamma_{K[\varpi_E]}.$$
 Hence by Theorem \ref{zeta-data factor of BH-rectifier}, the sign on the right side is
$$(-1)^{m(e_K+f_{{\varpi,K}}+1)}.$$
By knowing that $m_K\equiv m\mod 2$, the sign above is equal to the one on the left side.
\qed

\providecommand{\bysame}{\leavevmode\hbox to3em{\hrulefill}\thinspace}
\providecommand{\MR}{\relax\ifhmode\unskip\space\fi MR }
\providecommand{\MRhref}[2]{%
  \href{http://www.ams.org/mathscinet-getitem?mr=#1}{#2}
}
\providecommand{\href}[2]{#2}

\end{document}